\newlength{\defbaselineskip}
\theoremstyle{plain}
\newtheorem{theorem}{Theorem}[section]
\newtheorem{lemma}[theorem]{Lemma}
\theoremstyle{definition}
\newtheorem{construction}[theorem]{Construction}
\newtheorem{definition}[theorem]{Definition}
\newtheorem{remark}[theorem]{Remark}
\newtheorem{example}[theorem]{Example}
\newcommand{\musep}{\mu_{\text{Ind}}}
\newcommand{\mucycle}{\mu_{\text{cycle}}}
\DeclareMathOperator{\mut}{Mut}
\begin{document}
\definecolor{darksagegreen}{RGB}{0, 105, 60} 
\author{Matthew R. Mills}
\thanks{\email{matthew.mills@huskers.unl.edu}Supported by University of Nebraska - Lincoln, and by NSA grant H98230-14-1-0323.}
\title{Maximal green sequences for quivers of finite mutation type}
\address{Department of Mathematics, University of Nebraska - Lincoln, Lincoln, NE, USA}
\keywords{maximal green sequences, marked surfaces, finite mutation type, cluster algebras}

\begin{abstract}
In general, the existence of a maximal green sequence is not mutation invariant. In this paper we show that it is in fact mutation invariant for cluster quivers of finite mutation type. In particular, we show that a mutation finite cluster quiver has a maximal green sequence unless it arises from a once-punctured closed marked surface, or one of the two quivers in the mutation class of $\mathbb{X}_7$. We develop a procedure to explicitly find maximal green sequences for cluster quivers associated to arbitrary triangulations of closed marked surfaces with at least two punctures. As a corollary, it follows that any triangulation of a marked surface with boundary has a maximal green sequence. We also compute explicit maximal green sequences for exceptional quivers of finite mutation type. 
\end{abstract}

\maketitle

\section{Introduction}
\label{sec:in}
Cluster algebras were introduced by Fomin and Zelevinsky in \cite{cluster1}. Cluster algebras have become an important tool in the study of many areas of mathematics and mathematical physics. They play a role in the study of Teichm\"{u}ller theory, canonical bases, total positivity, Poisson-Lie groups, Calabi-Yau algebras, noncommutative Donaldson-Thomas invariants, scattering amplitudes, and representations of finite dimensional algebras.

One very important property of a quiver associated to a cluster algebra is whether or not it has a maximal green sequence. Quiver mutation is a transformation of a quiver, determined by a choice of a vertex of the quiver, into a new quiver. A maximal green sequence is a certain sequence of quiver mutations given by a sequence of vertices of the quiver. 
The idea of maximal green sequences of cluster mutations was introduced by Keller in \cite{keller}. He explored quantum dilogarithm identities by utilizing these sequences in the explicit computation of noncommutative Donaldson-Thomas invariants of quivers which were introduced by Kontsevich and Soibelman in \cite{kontsevich}. If a quiver with potential has a maximal green sequence, then its associated Jacobi algebra is finite dimensional \cite{brustle,keller2}. In \cite{amiot} an explicit construction of a cluster category from a quiver with potential whose Jacobian algebra is finite dimensional is given. 

The first result in this paper focuses on the existence of maximal green sequences for quivers that are associated to triangulations of surfaces. 

\begin{theorem}\label{thm:main}
Suppose that a marked surface $\Sigma$ is not once-punctured and closed, that is, $\Sigma$ is
\begin{enumerate}
\item[(A)] of genus at least one with at least two punctures;
\item[(B)] of genus zero with at least four punctures;
\item[(C)] or of arbitrary genus with at least one boundary component.\end{enumerate}
Then for any triangulation of $\Sigma$ there exists a maximal green sequence for the associated quiver. 
\end{theorem}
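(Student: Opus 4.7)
The plan is to produce, for each surface $\Sigma$ listed in the theorem, a single triangulation whose associated quiver admits an explicitly written maximal green sequence, and then invoke the mutation invariance of the MGS property for finite mutation type quivers (the central technical result advertised in the abstract) to transport the MGS to every other triangulation of $\Sigma$. This reduces the whole theorem to constructing explicit MGSs in well-chosen base examples, one per surface.

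For the closed cases (A) and (B), I would begin with a highly symmetric triangulation of $\Sigma$, for instance one in which every puncture is surrounded by the same local pattern of tagged arcs and (possibly) self-folded triangles. The MGS would be assembled in blocks indexed by the punctures: each block performs a short local mutation sequence around one puncture, essentially a type-$A$ or type-$D$ fragment, and the blocks are concatenated in an order compatible with the global arrow orientation so that green vertices stay green until their block is executed. For genus at least one, I would first pick a triangulation adapted to a system of disjoint simple arcs splitting $\Sigma$ into simpler pieces -- annuli, cylinders, and discs with punctures -- and then glue together MGSs produced on those pieces, using a concatenation lemma to certify the joint sequence.

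Case (C) of a surface with boundary should follow as a corollary: cap each boundary component with a once-punctured disc to obtain a closed surface $\hat\Sigma$ with strictly more punctures, belonging to class (A) or (B). Any triangulation of $\Sigma$ extends to one of $\hat\Sigma$ whose quiver contains the original quiver as a full subquiver on the interior arcs. By arranging the added ``capping'' arcs to be mutated first in the MGS for $\hat\Sigma$, the tail of the sequence restricted to the vertices of $\Sigma$ should be an MGS for the original quiver; alternatively, one could just invoke mutation invariance plus the observation that the boundary quiver is mutation equivalent to a subquiver of a closed-surface quiver already treated.

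The main obstacle is proving that each candidate sequence really is green at every step, particularly in the presence of self-folded triangles and tagged arcs, which distort the usual quiver combinatorics near punctures. I expect the cleanest route is a general concatenation or ``direct sum'' lemma for MGSs that applies when two subquivers are joined along a small separating arc; once such a lemma is established, the verifications reduce to checking its hypotheses on a modest list of base triangulations (a sphere with four punctures, a once-punctured torus with one extra puncture, a twice-punctured genus-one surface, and a handful of capped-boundary configurations), and these can be done by direct computation of $c$-vectors or equivalently by tracking arrow colors in the framed quiver.
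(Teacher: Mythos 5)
Your plan for cases (A) and (B) is circular. You propose to exhibit an MGS for one well-chosen triangulation of each surface and then ``invoke the mutation invariance of the MGS property for finite mutation type quivers'' to transport it to all other triangulations. But that invariance statement is precisely the paper's concluding theorem, and it is obtained only \emph{as a corollary} of Theorem \ref{thm:main}; it is not an independently available tool. Muller's example shows that existence of a maximal green sequence is not mutation invariant in general, and the only partial substitute in the literature, the Rotation Lemma of Br\"ustle--Hermes--Igusa--Todorov, propagates an MGS only to the specific intermediate quivers occurring along a given maximal green sequence --- the paper explicitly notes this does not reach every quiver in the mutation class. So reducing to one base triangulation per surface does not suffice: you must produce a green sequence for an \emph{arbitrary} triangulation. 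The paper does exactly that, via an explicit construction (the independence sequence $\musep^{\mathcal{P}}$ of Construction \ref{lem:sep}, which first mutates away arcs joining punctures of a chosen subset, followed by the Cycle Lemma $\mucycle$ around each puncture and a modified reverse sequence), iterated over a partition of the punctures by nesting depth of monogons and finally applied to a distinguished puncture $X$. There is no appeal to any concatenation or ``direct sum'' lemma for MGSs glued along a separating arc; such a lemma is not established in the paper and is not known in the generality you would need, so resting the verification on it leaves a second genuine gap.

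Your treatment of case (C) is essentially the paper's: cap each boundary component with a marked disk, observe that $Q_T$ is an induced subquiver of the quiver of the extended triangulation of the closed surface, and apply Muller's theorem that induced subquivers inherit maximal green sequences (Theorem \ref{thm:subquiver}). That part is fine once (A) and (B) are actually proved for all triangulations; your alternative of forcing the capping arcs to be mutated first is unnecessary and unproven, whereas the induced-subquiver route is immediate.
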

The proof of Theorem \ref{thm:main} for $(A)$ and $(B)$ is an explicit construction of maximal green sequences for these surfaces. The construction is given in Section \ref{sec:proof}. The proof for $(C)$ follows from the previous cases together with a theorem of Muller that we recall here as Theorem \ref{thm:subquiver}. The author had originally provided a construction for the case of higher genus surfaces and discussed surfaces with boundary in an extended abstract \cite{mills}. The construction given in this paper is a refinement of the one given there that also applies to punctured spheres. 
This construction of maximal green sequences requires many choices so we in fact get many different maximal green sequences for these quivers. It follows from the work of Keller \cite{keller} that our work here provides many quantum dilogorithm identities.

It is straightforward from the definition of quiver mutation (formally stated in Definition \ref{def:mutation}) that mutation imposes an equivalence relation on the set of all quivers. For a quiver $Q$ we let $\mut(Q)$ denote the equivalence class of $Q$ under this relation.

Muller showed that in general the existence of a maximal green sequence is not mutation invariant \cite{muller}. It is already known that every quiver of type $\mathbb{A}, \mathbb{D},$ and $\mathbb{E}$ has a maximal green sequence \cite{brustle}. It was shown by Ladkani that quivers associated to once-punctured closed surfaces of genus at least one do not admit maximal green sequences \cite{ladkani}. The existence of maximal green sequences for specific triangulations of various marked surfaces has been shown in many papers \cite{alim,bucher,bm}. In \cite{garver} Garver and Musiker give a combinatorial approach to construct maximal green sequences for type $\mathbb{A}$ quivers, which are exactly the quivers associated to triangulations of unpunctured disks. Cormier \textit{et al}., give an explicit construction of minimal length maximal green sequences for this case in \cite{cormier}.

In \cite{brustle2} Br\"ustle, Hermes, Igusa, and Todorov use semi-invariants to prove two conjectures about maximal green sequences. 
One particularly usefull result from this paper is the Rotation Lemma (\cite[Theorem 3]{brustle2}). In part, it shows that if a maximal green sequence for a quiver $Q$ first mutates vertex $k$, then the quiver obtained from mutating $Q$ at $k$ also has a maximal green sequence. Repeated application of this result then shows that any intermediate quiver in a maximal green sequence has a maximal green sequence. The Rotation Lemma gives the existence of a maximal green sequence for many quivers in a mutation class, but does not prove Theorem \ref{thm:main}.

A quiver $Q$ is said to be of finite mutation type if $\mut(Q)$ is finite.
It is known that all finite mutation type quivers arise from triangulations of surfaces except for the rank 2 case and 11 exceptional cases \cite{felikson}. These 11 cases are $\mathbb{E}_6,\mathbb{E}_7,\mathbb{E}_8,\widetilde{\mathbb{E}_6},\widetilde{\mathbb{E}_7},\widetilde{\mathbb{E}_8},\mathbb{E}_6^{(1,1)},\mathbb{E}_7^{(1,1)},\mathbb{E}_8^{(1,1)}, \mathbb{X}_6,$ and $\mathbb{X}_7.$ Among these exceptional cases it has been shown that there exists a quiver with a maximal green sequence for all but $\mathbb{X}_7$ \cite{alim}. It was shown in \cite{seven} that neither of the two quivers in the mutation class of $\mathbb{X}_7$ have a maximal green sequence. All rank 2 cluster algebras have a maximal green sequence given by first mutating at the source vertex and then mutating at the other vertex. 

We use the cluster algebra package in the computer program Sage to produce an explicit maximal green sequence for every quiver in the outstanding exceptional cases to obtain our second result.

\begin{theorem}[Theorem \ref{thm:main2}]\label{thm:main2f}
If $Q$ is a quiver in the mutation class of $\widetilde{\mathbb{E}_6},\widetilde{\mathbb{E}_7},\widetilde{\mathbb{E}_8},\mathbb{E}_6^{(1,1)},\mathbb{E}_7^{(1,1)},$ $\mathbb{E}_8^{(1,1)},$ or $\mathbb X_6$, then $Q$ has a maximal green sequence. 
\end{theorem}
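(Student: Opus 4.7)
The plan is to verify Theorem \ref{thm:main2f} by direct computation, one mutation class at a time. Each of the seven listed mutation classes is of finite mutation type, so the class is finite, and for the exceptional types the sizes are known and tractable. The first step is therefore to enumerate, in Sage, a complete set of representatives for the quivers in each class, by starting from the standard representative and performing a breadth-first traversal of the mutation graph, identifying newly-produced quivers against the existing list up to graph isomorphism.

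Once the quivers in a mutation class are enumerated, the core task is to exhibit a maximal green sequence for each one. Operationally, this is a search in the oriented exchange graph of the framed quiver $\widehat{Q}$ for a directed path from the initial seed to a seed in which every mutable vertex is red. Sage's cluster algebra package can carry out this search by backtracking, and on success records an explicit mutation sequence $(k_1,\ldots,k_n)$ that reddens $\widehat{Q}$; this sequence can then be tabulated as the certificate for $Q$.

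To keep the search tractable I would leverage the Rotation Lemma of Br\"ustle, Hermes, Igusa, and Todorov recalled in the introduction: a single maximal green sequence $(k_1,\ldots,k_n)$ for $Q$ simultaneously certifies maximal green sequences for every intermediate quiver $\mu_{k_i}\cdots\mu_{k_1}(Q)$ with $1\le i<n$. A well-chosen long sequence thus settles many quivers at once, so the strategy is to accumulate a small set of seed maximal green sequences whose intermediates together exhaust the mutation class, and to invoke the backtracking search only on the quivers that remain uncovered.

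The main obstacle is computational rather than conceptual. For the larger classes, notably the mutation class of $\mathbb{E}_8^{(1,1)}$, the mutation class contains many hundreds of quivers and individual reddening sequences can be long, so a naive depth-first search blows up. Making the search finish in reasonable time requires reasonable heuristics: preferring mutations at green vertices with few outgoing green arrows, pruning branches where the red count stagnates, and systematically reusing Rotation Lemma reductions. Once the explicit sequences have been produced and mechanically verified against the mutation rule and the greenness condition for each of the finitely many quivers in the seven classes, the theorem follows by direct inspection.
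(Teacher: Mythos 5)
Your proposal matches the paper's approach: the paper proves Theorem \ref{thm:main2} exactly by enumerating the finitely many quivers in each of the seven exceptional mutation classes and using Sage's cluster algebra package to compute an explicit maximal green sequence for every one, with the resulting sequences tabulated on the author's webpage. Your additional use of the Rotation Lemma to prune the computation is a sensible implementation optimization but does not change the substance of the argument.
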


It is already known that every quiver in the mutation class of $\mathbb{E}_6,\mathbb{E}_7,$ and $\mathbb{E}_8$ have maximal green sequences by \cite{brustle}. By combining previous results with Theorem \ref{thm:main} and Theorem \ref{thm:main2f} we have a complete classification of which quivers of finite mutation have a maximal green sequence. Furthermore, we have shown that the existence of a maximal green sequence is mutation-invariant for quivers of finite mutation type. 

\begin{theorem}[Theorem \ref{thm:main3}]\label{con:muteq}
Let $Q$ be a quiver of finite mutation type, then a maximal green sequence exists for every quiver in $\mut(Q),$ or there is no maximal green sequence for any quiver in $\mut(Q)$.
In particular, $Q$ has a maximal green sequence unless it arises from a triangulation of a once-punctured closed surface, or is one of the two quivers in the mutation class of $\mathbb{X}_7$. 
\end{theorem}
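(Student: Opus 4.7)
The plan is to reduce the theorem to a case analysis driven by the Felikson--Shapiro--Tumarkin classification \cite{felikson}, which partitions the quivers of finite mutation type into three families: (i) rank two quivers; (ii) quivers arising from triangulations of marked surfaces; and (iii) the eleven exceptional mutation classes $\mathbb{E}_n$, $\widetilde{\mathbb{E}_n}$, $\mathbb{E}_n^{(1,1)}$ for $n=6,7,8$, together with $\mathbb{X}_6$ and $\mathbb{X}_7$. For each family it suffices to show that the mutation class is either entirely populated by quivers admitting a maximal green sequence or entirely devoid of them; the explicit list of exceptional mutation classes in the second clause is then read off directly from the case analysis.

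First I would dispose of the rank two and exceptional cases. Rank two quivers admit the obvious two-mutation green sequence noted in the introduction. For the Dynkin classes $\mathbb{A}$, $\mathbb{D}$, $\mathbb{E}$ I would cite \cite{brustle}; for $\widetilde{\mathbb{E}_n}$, $\mathbb{E}_n^{(1,1)}$, and $\mathbb{X}_6$ I would invoke Theorem \ref{thm:main2f}, which produces, by computer search, a maximal green sequence for \emph{every} quiver in each of these finitely many mutation classes. For $\mathbb{X}_7$ I would cite \cite{seven}, which rules out maximal green sequences for both of the two quivers in its mutation class, thus giving mutation invariance (in the negative direction) for that class.

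The surface family is handled by combining the positive and negative results in a symmetric way. If the marked surface $\Sigma$ is not once-punctured closed, then Theorem \ref{thm:main} produces a maximal green sequence for every triangulation quiver of $\Sigma$. Appealing to the Fomin--Shapiro--Thurston fact that quiver mutation corresponds to a (tagged) flip and that the tagged flip graph is connected, every element of $\mut(Q)$ is a triangulation quiver, so the conclusion propagates to the whole mutation class. If instead $\Sigma$ is once-punctured closed of genus at least one, Ladkani \cite{ladkani} rules out a maximal green sequence for every triangulation quiver, and the same flip-graph identification propagates the obstruction to the whole mutation class.

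The step that will require the most care is precisely this passage from "every triangulation quiver of $\Sigma$" to "every quiver in $\mut(Q)$", since in the presence of punctures one must deal with tagged arcs and check that the construction of Theorem \ref{thm:main} is, or can be, applied uniformly to all tagged triangulations (and dually that Ladkani's obstruction detects all of them). Once this identification is made precise, both assertions of the theorem reduce to a transparent bookkeeping exercise combining Theorem \ref{thm:main}, Theorem \ref{thm:main2f}, and the results of \cite{brustle}, \cite{ladkani}, and \cite{seven} according to which family of the Felikson--Shapiro--Tumarkin classification a given $Q$ belongs to.
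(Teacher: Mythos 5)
Your proposal matches the paper's proof, which is exactly this bookkeeping: it combines Theorems \ref{thm:mgsclosed}, \ref{thm:bdy}, \ref{thm:finite}, \ref{thm:x7}, and \ref{thm:main2} with the rank-two observation and Ladkani's result, organized by the Felikson--Shapiro--Tumarkin classification. The one step you flag as delicate --- that every quiver in $\mut(Q_T)$ is again a triangulation quiver of the same surface, via the correspondence between mutations and tagged flips --- is left implicit in the paper (it follows from Lemma \ref{lem:mut=flip} and the flippability of tagged arcs in \cite{fomin}), so your version is the same argument with that point made explicit.
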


The existence of a maximal green sequence for a quiver also seems to be related to whether the cluster algebra $\mathcal{A}$ it generates is equal to its upper cluster algebra $\mathcal{U}$. Gross, Hacking, Keel and Kontsevich showed that if $\mathcal{A}= \mathcal{U}$ and a maximal green sequence exists, then the Fock-Goncharov canonical basis conjecture holds \cite{gross}. It is still unknown as to whether or not $\mathcal{A}=\mathcal{U}$ for closed higher genus surfaces with at least two punctures and punctured closed spheres. For all other quivers of finite mutation type it is known that $\mathcal{A}=\mathcal{U}$ if and only if there exists a quiver with a maximal green sequence. See \cite{lee} and references therein for more information on when $\mathcal{A}=\mathcal{U}.$ 

In Section \ref{sec:quivers} we give background on quivers and maximal green sequences. In Section \ref{sec:surfaces} we give background on marked surfaces and their triangulations. In Section \ref{sec:cycleindep} we discuss two mutation sequences that are used in Section \ref{sec:proof}, where we give the construction for maximal green sequences for closed surfaces. In Section \ref{sec:boundary} we prove the existence of maximal green sequences for surfaces with boundary. We then discuss the maximal green sequences for exceptional cases in Section \ref{sec:exceptional}. 

\section{Quivers and maximal green sequences}\label{sec:quivers}

We recall the definitions from \cite{keller2}, but use the conventions given in \cite{brustle}.

\begin{definition} A \textbf{(cluster) quiver} is a directed graph with no loops or 2-cycles. An \textbf{ice quiver} is a pair $(Q, F)$ where $Q$ is a quiver and $F$ is a subset of the vertices of $Q$ called \textbf{frozen vertices;} such that there are no edges between frozen vertices. If a vertex of $Q$ is not frozen it is called \textbf{mutable}. For convenience, we assume that the mutable vertices are labelled $\{1,\ldots, n\}$, and frozen vertices are labeld by $\{n+1, \ldots, n+m\}$. 
\end{definition}

\begin{definition}\label{def:mutation}
Let $(Q,F)$ be an ice quiver, and $k$ a mutable vertex of $Q$. The \textbf{mutation} of $(Q,F)$ at vertex $k$ is denoted by $\mu_k$, and is a transformation $(Q,F)$ to a new ice quiver $(\mu_k(Q),F)$ that has the same vertices, but making the following adjustment to the edges:  \begin{enumerate}
\item For every 2-path $i \rightarrow k \rightarrow j$, add a new arrow $i \rightarrow j$. 
\item Reverse the direction of all arrows incident to $k$. 
\item Delete any 2-cycles created during the first two steps, and any arrows between frozen vertices. 
\end{enumerate} \end{definition} 
Mutation at a vertex is an involution, and an equivalence relation. We define $\mut(Q)$ to be the equivalence class of all quivers that can be obtained from $Q$ by a sequence of mutations.

\begin{definition}
 
Let $Q_0$ be the set of vertices of $Q$. The \textbf{framed quiver} associated with a quiver $Q$ is the ice quiver $(\hat{Q},Q_0')$ such that:

$$Q_0'=\{i'\text{ }|\text{ }i\in Q_0\}, \hspace{.6cm} \hat{Q}_0 = Q_0 \sqcup Q_0'$$
$$\hat{Q}_1 = Q_1 \sqcup \{i \to i'\text{ }|\text{ }i \in Q_0\}$$
\end{definition}

Since the frozen vertices of the framed quiver are so natural we will simplify the notation and just write $\hat{Q}$. Now we must discuss what is meant by red and green vertices.

\begin{definition}\label{def:green1}
Let $R \in \mut(\hat{Q})$. \\A mutable vertex $i \in R_0$ is called \textbf{green} if $$\{j'\in Q_0'\text{ }| \text{ } \exists \text{ } j' \rightarrow i \in R_1 \}=\emptyset.$$ It is called \textbf{red} if $$\{j'\in Q_0'\text{ }| \text{ } \exists \text{ } j' \leftarrow i \in R_1 \}=\emptyset.$$ 
\end{definition}

It is not clear from the definition that every mutable vertex in $R_0$ is either red or green. In the case of quivers this result is due to \cite{derksen} and then it was also shown in a more general setting in \cite{gross}.

\begin{theorem}\cite{derksen,gross}\label{thm:signcoh}
Let $R \in \mut(\hat{Q})$. Then every mutable vertex in $R_0$ is either red or green.
\end{theorem}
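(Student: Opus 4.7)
The plan is to prove sign-coherence via the Derksen--Weyman--Zelevinsky (DWZ) theory of quivers with potentials, which is the route taken in the original proof in \cite{derksen}. First I would fix a non-degenerate potential $W$ on $Q$ (the existence of such a $W$ follows from a genericity argument in DWZ) and work in the category of decorated representations of the Jacobian algebra $\mathcal{P}(Q,W)$. A decorated representation is a pair $(M,V)$ where $M$ is a finite-dimensional $\mathcal{P}(Q,W)$-module and $V$ is a vertex-graded vector space thought of as a ``negative'' dimension. To each such pair one assigns a virtual dimension vector $\underline{\dim}\,M - \underline{\dim}\,V \in \mathbb{Z}^{Q_0}$.

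Second, I would invoke the DWZ mutation theorem: for each mutable vertex $k$, there is an involutive mutation operation $\mu_k$ on isomorphism classes of decorated representations whose effect on the underlying quiver agrees with the combinatorial quiver mutation from Definition \ref{def:mutation}. Starting from the $n$ negative simples $\mathcal{S}_i = (0, S_i)$ associated to the mutable vertices of $Q$, I would then show inductively along any mutation sequence $\mu_{k_t}\cdots\mu_{k_1}$ that, for each $i$, the mutated decorated representation $\mathcal{M}_i := \mu_{k_t}\cdots\mu_{k_1}(\mathcal{S}_i)$ is either a \emph{genuine representation} $(M,0)$ with $M\neq 0$, or a \emph{purely negative} representation $(0,V)$ with $V\neq 0$. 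The virtual dimension vector of $\mathcal{M}_i$ is then either entirely non-negative or entirely non-positive.

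Third, I would identify this virtual dimension vector with the $i$-th $c$-vector of $R$, that is, with the tuple of signed multiplicities of arrows between the mutable vertex $i$ and the frozen vertices $j'$ in $R$. Concretely, one shows by induction on the mutation sequence that the number of arrows $j'\to i$ minus the number of arrows $i\to j'$ equals the $j$-th component of $\underline{\dim}\,M_i - \underline{\dim}\,V_i$. Combined with the previous step, this forces all these differences to have the same sign, which is exactly the statement that $i$ is either red or green in the sense of Definition \ref{def:green1}.

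The main obstacle is the inductive dichotomy in the second step: one must argue that mutation never produces a ``mixed'' decorated representation whose module part and decoration part are both nonzero. This is the technical heart of the DWZ argument and relies on the precise formulas for mutation of representations, in particular on the exactness properties of the triangle $\ker(\gamma_k)/\mathrm{im}(\beta_k) \to M_k \to \ker(\alpha_k)/\mathrm{im}(\gamma_k)$ appearing in the definition of $\mu_k$, together with the fact that $W$ is non-degenerate so that iterated mutation of the potential remains reduced. The alternative route, via scattering diagrams and broken lines in the sense of Gross--Hacking--Keel--Kontsevich \cite{gross}, replaces this technical step by a positivity statement for wall-crossing automorphisms, but the structural skeleton of the argument—reducing sign coherence to a categorified non-negativity statement—is the same.
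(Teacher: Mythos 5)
The paper does not prove Theorem~\ref{thm:signcoh}: it is quoted from \cite{derksen} and \cite{gross} and used as a black box, so there is no internal proof to compare your argument against. Judged on its own terms, what you have written is a roadmap of the proof in the literature rather than a proof, and the two steps you defer --- the dichotomy that a mutated negative simple is either a genuine module $(M,0)$ or purely negative $(0,V)$, and the identification of the resulting virtual dimension vector with the $c$-vector read off from the arrows between $i$ and the frozen vertices --- are precisely the entire content of the theorem. Deferring both to ``the precise formulas for mutation of representations'' discharges nothing.

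There is also a substantive inaccuracy in the attribution. Tracking the decorated representations $\mu_{k_t}\cdots\mu_{k_1}(\mathcal{S}_i)$ obtained from the negative simples is the mechanism by which Derksen--Weyman--Zelevinsky control $g$-vectors; it is not how they obtain the red/green dichotomy of Definition~\ref{def:green1}, which is column sign-coherence of the $C$-matrix. In \cite{derksen} that statement is reached by a different route: the $F$-polynomial of each cluster variable is realized as a generating function of Euler characteristics of quiver Grassmannians of the associated module, hence has constant term $1$, and sign-coherence of $c$-vectors then follows from the purely combinatorial equivalence established by Fomin and Zelevinsky in \emph{Cluster algebras IV} (Section~5). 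The direct identification of $c$-vectors with $\pm$ dimension vectors of modules that your third step asserts is true, but it is a later refinement (Nagao, N\'ajera Ch\'avez, and others) whose proofs themselves rest on the constant-term statement or on stability/DT-theoretic input; it is not available ``by induction on the mutation sequence'' as you suggest. So either reroute the argument through $F$-polynomials to match \cite{derksen}, or cite precisely the theorem identifying $c$-vectors with signed dimension vectors --- as it stands, that is where all the work is hiding.
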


\begin{definition}
A \textbf{green sequence} for $Q$ is a sequence $\textbf{i}=(i_1, \dots, i_l) \subset Q_0$ such that $i_1$ is green in $\hat{Q}$ and for any $2\leq k \leq l$, the vertex $i_k$ is green in $\mu_{i_{k-1}}\circ \cdots \circ \mu_{i_1}(\hat{Q})$. 
A green sequence \textbf{i} is called maximal if every mutable vertex in $\mu_{i_{l}}\circ \cdots \circ \mu_{i_1}(\hat{Q})$ is red.
\end{definition}

\section{Marked surfaces and their triangulations}\label{sec:surfaces}

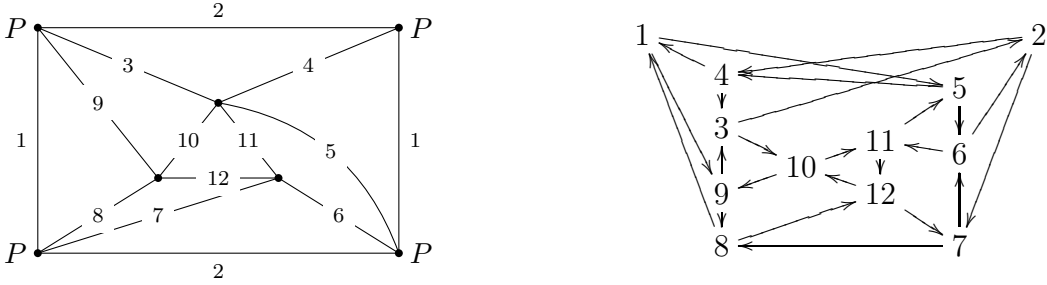
\begin{figure}[t]
\centering
\begin{minipage}{.5\textwidth}
\begin{tikzpicture}[x=.8cm,y=.5cm,font=\tiny]
\coordinate (a) at (0,0);
\coordinate (b) at (6,0);
\coordinate (c) at (6,6);
\coordinate (d) at (0,6);
\coordinate (g) at (2,2);
\coordinate (f) at (4,2);
\coordinate (e) at (3,4);
\fill (a) circle (1.5pt);
\fill (b) circle (1.5pt);
\fill (c) circle (1.5pt);
\fill (d) circle (1.5pt);
\fill (e) circle (1.5pt);
\fill (f) circle (1.5pt);
\fill (g) circle (1.5pt);

\draw (a) node[left]{{\small $P$}};
\draw (d) node[left]{{\small $P$}};
\draw (b) node[right]{{\small $P$}};
\draw (c) node[right]{{\small $P$}};

\draw (a) to node [midway, below] {\tiny{2}} (b);
\draw (b) to node [midway, right] {\tiny{1}} (c);
\draw (c) to node [midway, above] {\tiny{2}} (d);
\draw (d) to node [midway, left] {\tiny{1}} (a);

\draw (d) to node [midway,fill=white]{3} (e);
\draw (e) to node [midway,fill=white]{4} (c);
\draw (e) to[bend left] node [midway,fill=white]{5} (b);
\draw (f) to node [midway,fill=white]{6} (b);
\draw (f) to node [midway,fill=white]{7} (a);
\draw (g) to node [midway,fill=white]{8} (a);
\draw (g) to node [midway,fill=white]{9} (d);
\draw (g) to node [midway,fill=white]{10} (e);
\draw (e) to node [midway,fill=white]{11} (f);
\draw (g) to node [midway,fill=white]{12} (f);
\end{tikzpicture}
\end{minipage}
\begin{minipage}{.5\textwidth}
\begin{xy} 0;<.3pt,0pt>:<0pt,-.2pt>:: 
(0,0) *+{1} ="0",
(500,0) *+{2} ="1",
(100,175) *+{3} ="2",
(100,75) *+{4} ="3",
(400,100) *+{5} ="4",
(400,225) *+{6} ="5",
(400,400) *+{7} ="6",
(100,400) *+{8} ="7",
(100,300) *+{9} ="8",
(200,250) *+{10} ="9",
(300,200) *+{11} ="10",
(300,300) *+{12} ="11",
"3", {\ar"0"},
"0", {\ar"4"},
"7", {\ar"0"},
"0", {\ar"8"},
"2", {\ar"1"},
"1", {\ar"3"},
"5", {\ar"1"},
"1", {\ar"6"},
"3", {\ar"2"},
"8", {\ar"2"},
"2", {\ar"9"},
"4", {\ar"3"},
"4", {\ar"5"},
"10", {\ar"4"},
"6", {\ar"5"},
"5", {\ar"10"},
"6", {\ar"7"},
"11", {\ar"6"},
"8", {\ar"7"},
"7", {\ar"11"},
"9", {\ar"8"},
"9", {\ar"10"},
"11", {\ar"9"},
"10", {\ar"11"},
\end{xy}
\end{minipage}
\caption{A triangulation $T_1$ of $\Sigma_1=\{1,0,4,\emptyset\}$ (left), and the corresponding quiver $Q_{T_1}$ (right).}\label{ex:1}
\end{figure}

To begin the section we recall the definition of a marked surface given in \cite{fomin}.
Let $S$ be an orientable 2-dimensional Riemann surface with or without boundary. We designate a finite number of points, $M$, in the closure of $S$ as marked points. We require at least one marked point on each boundary component. We call marked points in the interior of $S$ \textbf{punctures.} Together the pair $\Sigma=(S,M)$ is called a \textbf{marked surface.} 
For technical reasons we exclude the cases when $\Sigma$ is one of the following: 
\begin{itemize}
\item a sphere with less than four punctures;
\item an unpunctured or once punctured monogon;
\item an unpunctured digon; or 
\item an unpunctured triangle. 
\end{itemize}
Note that the construction allows for spheres with four or more punctures.

Up to homeomorphism a marked surface is determined by four things. The first is the genus $g$ of the surface. The second is the number of boundary components $b$ of $S$. The third is the number of punctures $p$ in $M$, and the fourth is the set $m=\{m_i\}_{i=1}^{b}$ where $m_i\in \mathbb{Z}_{>0}$ denotes the number of marked points on the $i$th boundary component of $S.$ We say a marked surface is \textbf{closed} if it has no boundary. 
\begin{definition}
An \textbf{arc} $\gamma$ in $(S,M)$ is a curve in $S$ such that:
\begin{itemize}
\item The endpoints of $\gamma$ are in $M$. 
\item $\gamma$ does not intersect itself, except that its endpoints may coincide.
\item $\gamma$ is disjoint from $M$ and the boundary of $S$, except at its endpoints. 
\item $\gamma$ is not isotopic to the boundary, or the identity. 
\end{itemize}
An arc is called a \textbf{loop} if its two endpoints coincide.

Each arc is considered up to isotopy. Two arcs are called compatible if there exists two arcs in their respective isotopy classes that do not intersect in the interior of $S$. 
\end{definition}

\begin{definition}
A \textbf{taggd arc} is constructed by taking an arc that does not cut out a once-punctured monogon and marking or "tagging" its ends as either \textbf{plain} or \textbf{notched} so that: \begin{itemize}
\item an endpoint lying on the boundary of $S$ is tagged plain; and
\item both ends of a loop must be tagged in the same way. 
\end{itemize} 
We use a $\bowtie$ to denote the tagging of an arc in figures. Two tagged arcs are considered compatible if: \begin{itemize}
\item Their underlying untagged arcs are the same, and their tagging agrees on exactly one endpoint. 
\item Their underlying untagged arcs are distinct and compatible, and any shared endpoints have the same tagging. 
\end{itemize}
 A maximal collection of pairwise compatible tagged arcs is called a \textbf{(tagged) triangulation} of $(S,M)$. 
\end{definition}

\begin{figure}[t]
\begin{tikzpicture}
\draw (0,0) to[bend left] node[above]{$\ell$} (9,0) to[bend left] node [below]{$i$} (0,0);
\draw (0,0) to[bend left]  node [below]{$k$} node[near end,sloped,rotate=90]{$\bowtie$} (5,0) to[bend left] node [above]{$j$} (0,0);

\fill (0,0) circle (1.5pt);
\fill (5,0) circle (1.5pt);
\fill (9,0) circle (1.5pt);
\draw (5,0) node[right] {$P$};
\end{tikzpicture}
\caption{The puncture $P$ is a radial puncture.}\label{fig:radial}
\end{figure}
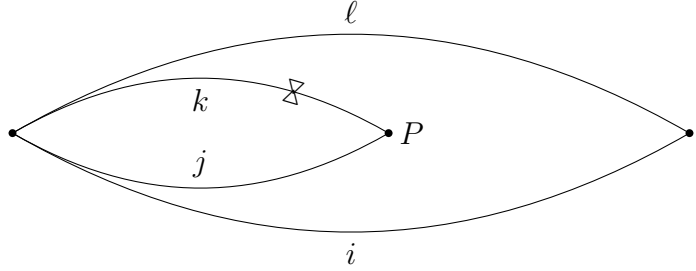

\begin{definition}
We call a puncture $P$ a \textbf{radial puncture in a tagged triangulation} if and only if $P$ is the unique puncture in the interior of a digon and there exists two arcs in the interior of this digon that differ only by their tagging at $P$. See Figure \ref{fig:radial}.
\end{definition}

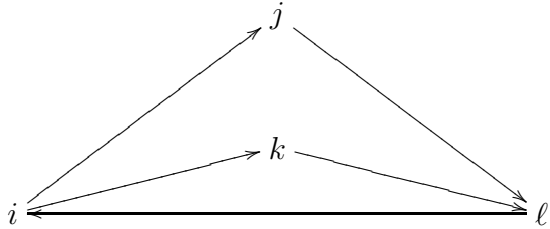
\begin{figure}[t]

\centering
$$
\begin{xy} 0;<1pt,0pt>:<0pt,-1pt>:: 
(0,75) *+{i} ="0",
(100,0) *+{j} ="1",
(100,50) *+{k} ="2",
(200,75) *+{\ell} ="3",
"0", {\ar"1"},
"0", {\ar"2"},
"1", {\ar"3"},
"2", {\ar"3"},
"3",{\ar"0"},
\end{xy}$$
\caption{A special situation in the construction of a quiver from the triangulation given in Figurex\ref{fig:radial}.}\label{fig:arc}
\end{figure}

\begin{definition}\label{def:quivfromtri}
Let $T$ be a triangulation of a marked surface. The \textbf{quiver associated to $T$}, which we will denote as $Q_T$, is the quiver obtained from the following construction. For each arc $\alpha$ in a triangulation $T$ add a vertex $v_\alpha$ to $Q_T$. If $\alpha_i$ and $\alpha_j$ are two edges of a triangle in $T$ with $\alpha_j$ following $\alpha_i$ in a clockwise order, then add an edge to $Q_T$ from $v_{\alpha_i} \rightarrow v_{\alpha_j}$. If $\alpha_k$ and $\alpha_j$ have the same underlying untagged arc as in Figure \ref{fig:radial} we refer you to Figure \ref{fig:arc} for the construction in this situation. Note that the quiver is the same whether $\alpha_j$ or $\alpha_k$ is tagged. More generally distinct triangulations may yield the same quiver. 
\end{definition}

We now define the analog of quiver mutation for triangulations of a marked surface. 

\begin{definition}
A \textbf{flip} is a transformation of a triangulation that removes an arc $\gamma$ and replaces it with a (unique) different arc $\gamma'$ that, together with the remaining arcs, forms a new triangulation $T'$. In this case we define $\mu_\gamma(T)=T'$. This makes sense by the following lemma. 
\end{definition}

\begin{lemma}\cite[Lemma 9.7]{fomin}\label{lem:mut=flip}
Let $T$ and $T'$ be two triangulations related by a flip of an arc $\gamma$. Suppose $\gamma$ corresponds to vertex $k$ of $Q_T$, then $Q_{T'} = \mu_k(Q_T)$.
\end{lemma}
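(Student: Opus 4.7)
The plan is to prove the lemma by a purely local analysis. Both operations involved — the flip of $\gamma$ and the mutation $\mu_k$ — alter only a neighborhood of the object they act on: the flip changes only the two triangles of $T$ containing $\gamma$, and $\mu_k$ changes only the arrows incident to $k$ together with the arrows between $k$'s immediate neighbors. Outside this neighborhood, both triangulations share the same triangles and both quivers share the same arrows, so the comparison of $Q_{T'}$ with $\mu_k(Q_T)$ reduces to a finite check on the subquiver spanned by $k$ and the vertices indexed by arcs sharing a triangle with $\gamma$.

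First I would handle the generic case: $\gamma$ is the diagonal of an embedded quadrilateral whose two triangles $\Delta_1 = (\gamma, a, b)$ and $\Delta_2 = (\gamma, c, d)$ are distinct and pairwise edge-disjoint. Reading off the clockwise contributions of each triangle per Definition \ref{def:quivfromtri} yields arrows $a\to k$, $k\to b$, $b\to a$ from $\Delta_1$ and the analogous arrows from $\Delta_2$. Applying the three mutation rules at $k$ creates shortcut arrows along the 2-paths through $k$ (e.g.\ $a\to d$ and $c\to b$), reverses the arrows at $k$ (producing $k\to a$, $b\to k$, and so on), and cancels the resulting 2-cycles. A direct match with $Q_{T'}$, built from the two new triangles $(\gamma', a, d)$ and $(\gamma', c, b)$ in clockwise order, confirms $Q_{T'}=\mu_k(Q_T)$ in this case.

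The remaining work is to dispense with the degenerate configurations, which I would enumerate case by case: (i) one or both of the adjacent triangles is self-folded; (ii) two or more of the outer arcs $a,b,c,d$ are identified, so that a single vertex of $Q_T$ collects several contributions that may or may not cancel in 2-cycles; and (iii) the radial puncture situation of Figure \ref{fig:radial}, where the special rule depicted in Figure \ref{fig:arc} governs the construction. In each case the procedure is the same: enumerate the arrows contributed by the affected triangles before and after the flip, and verify that the outcome of $\mu_k$ matches. The fact, noted in Definition \ref{def:quivfromtri}, that $Q_T$ is insensitive to which of the two twin arcs at a radial puncture carries the notch allows this check to be performed without tracking the tagging.

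The main obstacle is precisely the bookkeeping in case (iii), together with configurations in which (i) and (ii) interact. The generic case reduces to a short direct computation, but the special arrow of Figure \ref{fig:arc} and the vertex identifications forced by self-folded triangles must be tracked with care; verifying that these nongeneric conventions are exactly what mutation predicts is the heart of the proof, and it is why Definition \ref{def:quivfromtri} is formulated with the particular ``extra arrow'' rule shown in Figure \ref{fig:arc} in the first place.
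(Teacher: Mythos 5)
This lemma is not proved in the paper at all: it is quoted verbatim from Fomin--Shapiro--Thurston \cite{fomin} (their Lemma 9.7, which in turn rests on their Proposition 4.8 for ideal triangulations), so there is no in-paper argument to compare yours against. Your strategy --- a local case analysis around the flipped arc, with the generic embedded quadrilateral done by direct computation and the degenerate configurations enumerated separately --- is the standard and correct route, and it is essentially how the untagged case is proved in the cited source. Two caveats, though. First, your reduction ``to the subquiver spanned by $k$ and the arcs sharing a triangle with $\gamma$'' is slightly too quick: while the arrows \emph{incident to} $k$ do come only from the two triangles at $\gamma$, the arrows \emph{among} the outer arcs $a,b,c,d$ can receive contributions from triangles far away from $\gamma$, and the cancellation of 2-cycles in step (3) of Definition \ref{def:mutation} must be matched against the cancellation of opposite contributions in $Q_{T'}$; this is cleanest if you work with the signed adjacency matrix (where entries are sums of $\pm 1$ contributions over all triangles) rather than with the quiver directly. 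Second, and more seriously, your outline names cases (i)--(iii) but does not execute them, and you yourself concede that the radial-puncture case of Figures \ref{fig:radial} and \ref{fig:arc} is ``the heart of the proof.'' In the tagged setting the flip of an arc at a radial puncture is not modeled on a quadrilateral at all, and the source handles it not by brute-force enumeration of tagged local pictures but by reducing the tagged adjacency matrix to that of an associated ideal triangulation (replacing one of the two parallel arcs by a self-folded loop) and checking that tagged flips correspond to ordinary flips under that dictionary. As written, your proposal is a sound plan whose generic case is complete but whose deferred cases carry all of the remaining difficulty.
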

In Sections \ref{sec:cycleindep} and \ref{sec:proof} we will exclusively refer to a flip of an arc in a triangulation as a mutation. 

Thurston's theory of laminations and shear coordinates provide a way to introduce frozen vertices in the geometric setting. 

\begin{definition}\cite[Definition 12.1]{ft}
A \textbf{lamination} on a marked surface $(S,M)$ is a finite collection of non-self-intersecting and pairwise non-intersecting curves in $S$ up to isotopy. Each curve must be one of the following: \begin{itemize}
\item a closed curve;
\item a curve connecting two unmarked points on the boundary of $S$;
\item a curve starting at an unmarked point on the boundary, and at its other end spiraling into a puncture (either clockwise or counter clockwise);
\item a curve whose ends both spiral into punctures (not necessarily distinct). 
\end{itemize} 
We forbid any curves that bound an unpunctured or once-punctured disk, curves with two endpoints on the boundary which are isotopic to a piece of boundary containing zero or one marked points, and a curve with two ends spiraling into the same puncture in the same direction without enclosing anything else. 
\end{definition}

\begin{figure}
\begin{tikzpicture}[x=2cm,y=2cm]
\draw[ultra thick] (0,0) to (1,0);
\draw (1,0) to (1,1);
\draw[ultra thick] (1,1) to (0,1);
\draw (0,1) to (0,0);
\draw[very thick] (0,1) to node[near start,below]{$\alpha$} (1,0);

\draw[ultra thick] (3,0) to (4,0);
\draw (4,0) to (4,1);
\draw[ultra thick] (4,1) to (3,1);
\draw (3,1) to (3,0);

\draw[ultra thick] (3,0) to node[near end,below]{$\alpha$}(4,1);
\draw (.5,1.25) node[above]{$\ell$} to[out=260, in= 80,looseness=1] (.5,-0.25);
\draw (3.5,1.25) node[above]{$\ell$} to[out=260, in= 80,looseness=1] (3.5,-.25);
\end{tikzpicture}
\caption{On the left, the curve $\ell \in L$ contributes a +1 to the shear coordinate for $\alpha$. On the right, $\ell$ contributes a -1. We have made the arcs that intersect $\ell$ bold to emphasize the ``$S$'' and ``$Z$'' shapes.}\label{fig:shearcoord}
\end{figure}
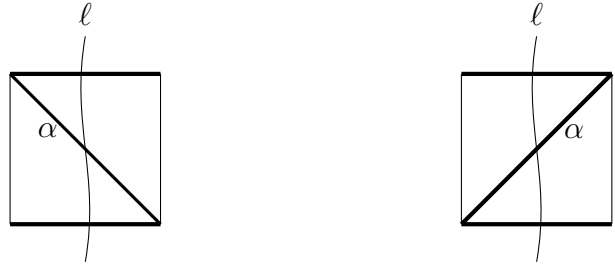
\begin{definition}\cite[Definition 12.2,13.1]{ft}\label{def:lamination}
Let $L$ be a lamination, and let $T$ be a triangulation without any arcs that are notched. Note that this requires that there are no radial punctures in $T$. For each $\alpha \in T$, the corresponding \textbf{shear coordinate} of $L$ with respect to $T$ denoted $b_\alpha(T,L),$ is defined as a sum of contributions from all intersections of curves in $L$ with $\alpha$. An intersection contributes a +1 (resp., -1) to $b_\alpha(T,L)$ if the corresponding segment of a curve in $L$ cuts through the quadrilateral surrounding $\alpha$ cutting through the edges in the shape of an ``$S$'' (resp., in the shape of a $``Z$''), as in Figure \ref{fig:shearcoord}. See Figure \ref{fig:shearcoord}.
Note that at most one of these two types of intersections can occur and $b_\alpha(T,L)$ is always finite. 

For a triangulation $T$ that contains notched arcs the shear coordinates are uniquely defined by the following rules:
\begin{enumerate}
\item Suppose that tagged triangulations $T_1$ and $T_2$ coincide except that at a particular puncture $P$, the tags of the arcs in $T_1$ are all different from the tags of their counterparts in $T_2$. Suppose that laminations $L_1$ and $L_2$ coincide except that each curve in $L_1$ that spirals into $P$ has been replaced in $L_2$ by a curve that spirals in the opposite direction. Then $b_{\alpha_1}(T_1,L_1)=b_{\alpha_2}(T_2,L_2)$ for each tagged arc $\alpha_1 \in T_1$ and its counterpart $\alpha_2 \in T_2$. 

\item By performing tag-changing transformations $L_1 \rightarrow L_2$ with $L_1$ and $L_2$ as above, we can convert any triangulation into a triangulation $T$ that does not contain any notches except possibly at radial punctures. If $\alpha \in T$ is not notched at any puncture, then we define $b_\alpha(T,L)$ as above for the underlying plain arc.
\end{enumerate}
 If $\alpha \in T$ is the arc incident to a radial puncture with different taggings at its endpoints, then we apply the tag-changing transformation in rule (1) to the radial puncture and then use rule (2) to compute $b_\alpha(T,L)$.
\end{definition}
Note that the quiver constructed from a triangulation in Definition \ref{def:quivfromtri} does not include any frozen vertices. We can use laminations and shear coordinates to extend Theorem \ref{lem:mut=flip} to show that the flips in triangulations of surfaces together with laminations agree with mutations of ice quivers. 
\begin{definition}
 A \textbf{multi-lamination} is a finite family of laminations. Let $T$ be a triangulation for a marked surface and $\mathcal{L}$ a multi-lamination. Let $Q_T$ be the quiver constructed in Definition \ref{def:quivfromtri}. Suppose that the arcs in $T=\{\alpha_i\}_{i=1}^n$ are indexed by their corresponding vertex of $Q_T$ and that $\mathcal{L}=\{L_j\}_{j=n+1}^m$. We define an ice quiver $(\widetilde{Q_T},F_\mathcal{L})$ where $$F_\mathcal{L}=\{j | L_j \in \mathcal{L}\}, (\widetilde{Q_T})_0=(Q_T)_0 \sqcup F_\mathcal{L},$$ and the edges of $\widetilde{Q_T}$ are the edges of $Q_T$ together with $b_{\alpha_i}(T,L_j)$ edges $i \rightarrow j$ for all $i=1,\dots,n$ and $j=n+1,\dots,m$. Note that a negative value for $b_{\alpha_i}(T,L_j)$ corresponds to adding $|b_{\alpha_i}(T,L_j)|$ edgess $j \rightarrow i$. 
\end{definition}

\begin{lemma} \cite[Theorem 13.5]{ft}\label{lem:flip=mut2} 
Let $T$ and $T'$ be two triangulations related by a flip of an arc $\gamma$. Suppose $\gamma$ corresponds to vertex $k$ of $\widetilde{Q_T}$, then $\widetilde{Q_{T'}} = \mu_k(\widetilde{Q_T})$.
\end{lemma}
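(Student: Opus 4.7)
The plan is to split the claim into two parts: the equality of the mutable subquivers and the equality of the arrows joining mutable to frozen vertices. The first is precisely Lemma \ref{lem:mut=flip}, so the real work is to verify that the edge multiplicities from mutable to frozen vertices (i.e., the shear coordinates) transform according to quiver mutation at $k$.

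First I would reduce to a triangulation $T$ that has no notched arcs except possibly at radial punctures, using the tag-changing transformations in Definition \ref{def:lamination}(1)--(2). After this reduction every shear coordinate $b_{\alpha_i}(T,L_j)$ is computed directly as a signed count of $S$- and $Z$-shaped crossings. Next I would fix a single lamination $L_j$ and a single component $\ell\in L_j$ and exploit locality: $\ell$ contributes to $b_{\alpha_i}(T,L_j)$ only when $\alpha_i$ lies on the boundary of a region that $\ell$ traverses. This lets me restrict attention to the quadrilateral $R$ whose diagonal is $\gamma$, with cyclically labeled sides $a,b,c,d$.

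Now I would enumerate the finitely many local configurations for $\ell$ relative to $R$: entering and exiting through any ordered pair of the four sides, or spiraling into one of the vertices of $R$. In each case I tabulate the contributions of $\ell$ to the shear coordinates $b_a,b_b,b_c,b_d,b_\gamma$ with respect to $T$, and then recompute them with respect to $T'$ after replacing $\gamma$ by the opposite diagonal. Comparing the two tables case by case, I verify the identity
\[
b_{\alpha_i}(T',L_j) - b_{\alpha_i}(T,L_j) \;=\; \frac{|b_{ik}|\, b_{kj} + b_{ik}\,|b_{kj}|}{2},
\]
which is exactly the change in arrow count from $i$ to $j$ prescribed by mutation at $k$ in $\widetilde{Q_T}$. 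Summing over components of $L_j$ and over all $L_j \in \mathcal{L}$, and combining with Lemma \ref{lem:mut=flip} for the mutable part, gives $\widetilde{Q_{T'}} = \mu_k(\widetilde{Q_T})$.

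The main obstacle is the degenerate local geometry: self-folded triangles arising when a loop encloses a puncture, components of $\ell$ that spiral into endpoints of $\gamma$ (requiring the tag-changing rules to reappear mid-argument), situations where two sides of $R$ are the same arc, and the bookkeeping around radial punctures. These cases are where sign conventions for $S$- vs.\ $Z$-shapes have to be reconciled with the sign conventions in the mutation formula, and where most of the length of the proof would live. A conceptually cleaner alternative would be to work with tropical $\mathbf{y}$-variables and invoke the $X$-cluster transformation formulas for the decorated Teichm\"uller space of $\Sigma$, but the direct case analysis sketched above is the most self-contained route.
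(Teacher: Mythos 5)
The paper does not actually prove this lemma: it is quoted directly from Fomin--Thurston \cite[Theorem 13.5]{ft}, so there is no internal proof to compare against, and your sketch has to be measured against the source. Your decomposition is the right one --- the mutable part is exactly Lemma \ref{lem:mut=flip}, and the remaining content is that the shear coordinates $b_{\alpha_i}(T,L_j)$ transform by the extended matrix mutation rule --- and the local case analysis on the flip quadrilateral, justified by the additivity of shear coordinates over intersections and over components of $L_j$, is a standard and workable route. (Fomin--Thurston lean more heavily on the tropical $X$-dynamics you mention only as an alternative, so your primary route is genuinely more elementary, at the price of a long enumeration.)

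Two points keep the sketch from being a complete argument. First, your displayed identity only covers $i\neq k$; you also need $b_{\gamma'}(T',L_j)=-b_{\gamma}(T,L_j)$ for the flipped arc itself, which your tables would presumably show but which the stated identity does not assert. Second, and more seriously, the degenerate cases you defer to a closing paragraph are not peripheral bookkeeping: for tagged triangulations the shear coordinates are only \emph{defined} through the tag-changing rules of Definition \ref{def:lamination}, so for a flip inside a digon containing a radial puncture (the configuration of Figures \ref{fig:radial} and \ref{fig:arc}) the ``local configuration of $\ell$ in the quadrilateral $R$'' does not exist in the naive form, and compatibility of that definition with the flip is the actual content of the theorem there. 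Since this paper applies the lemma precisely in such situations (e.g.\ throughout Lemma \ref{lem:cyclesurface} and Sections \ref{sec:cycleindep} and \ref{sec:proof}, where radial punctures are created and resolved), a proof covering only plain, non-self-folded quadrilaterals would not suffice for the intended use.
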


%
%

\begin{definition}\label{def:elelam}
Let $\alpha$ be a tagged arc of a marked surface. Denote by $L_\alpha$ a lamination consisting of a single curve defined as follows. The curve $L_\alpha$ runs along $\alpha$ within a small neighborhood of it. If $\alpha$ has an endpoint $a$ on a component $C$ of the boundary of $S$, then $L_\alpha$ begins at a point $a' \in C$ located near $a $ in the clockwise direction, and proceeds along $\alpha$. If $\alpha$ has an endpoint at a puncture, then $L_\alpha$ spirals into $a$: clockwise if $\alpha$ is tagged plain at $a$, and counterclockwise if it is notched. We call $L_\alpha$ the \textbf{elementary lamination associated to $\alpha$}. 
\end{definition}

Note that for any arc $\alpha$ in a triangulation $T,$ $L_\alpha$ is the unique lamination such that $$b_\gamma(T,L_\alpha) = \begin{cases} -1 & \text{if } \gamma = \alpha, \\ 0 &\text{if } \gamma \neq \alpha.\end{cases}$$ If we fix the multi-lamination $\mathcal{L}= \{L_\alpha | \alpha \in T\}$ then the ice quiver $\widetilde{Q_T}$ is identical to the framed quiver $\widehat{Q_T}.$

\begin{remark}
The elementary laminations defined in Definition \ref{def:elelam} are not the same elementary laminations given in \cite{ft}. There $L_\alpha$ is the unique lamination that contributes a +1 to $b_\alpha(T,L)$ and 0 for all other arcs.
\end{remark}
We now give the geometric characterization for what it means for an arc to be green or red. 

\begin{definition}\label{def:green2}
Let $T$ be a triangulation of a marked surface. Fix the multi-lamination $\mathcal{L}=\{L_\alpha\}_{\alpha\in T}$ where $L_\alpha$ is the elementary lamination associated to $\alpha$. Let $T'$ be a triangulation obtained from $T$ by some finite sequence of flips. Then $\alpha' \in T'$ is said to be \textbf{green} if $$\{L \in L^\circ | b_{\alpha'}(T',L) > 0\} = \emptyset.$$ It is called \textbf{red} if $$\{L \in L^\circ | b_{\alpha'}(T',L) < 0\} = \emptyset.$$
\end{definition}

It follows from Lemma \ref{lem:flip=mut2} that an arc $\alpha' \in T'$ is green (resp., red) in the sense of Definition \ref{def:green2} if and only if its corresponding vertex in $(\widetilde{Q_{T'}},F_\mathcal{L})$ is green (resp., red) in the sense of Definition \ref{def:green1}. 


We also provide one more lemma about green sequences that we will use in the sequel. 

\begin{lemma}\label{lem:vertexdone}
Let $Q$ be a quiver and $i$ a vertex in $Q$. If there exists a frozen vertex $j'$ such that there is exactly one arrow incident to $j'$ and it points at $i$, then $i$ is red and will never be mutated at in any green sequence for $Q$. 
\end{lemma}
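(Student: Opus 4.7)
The plan is to argue that the hypothesis forces $i$ to be red initially, and that this property is preserved by every admissible mutation in a green sequence, so $i$ can never turn green and therefore is never mutated.

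First I would verify the base case. By hypothesis there is a frozen vertex $j'$ with a single incident arrow $j' \to i$. Then $\{j' \in Q_0' \mid j' \to i \in Q_1\}$ is nonempty, so by Definition \ref{def:green1} the vertex $i$ is not green in $Q$. Theorem \ref{thm:signcoh} then forces $i$ to be red. Since a green sequence by definition only mutates at green vertices, $i$ cannot be mutated at the very first step.

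The heart of the argument is the invariance claim: if $R$ is any ice quiver obtained from $Q$ by a sequence of mutations at vertices different from $i$, then $R$ still contains the arrow $j' \to i$ as the unique arrow incident to $j'$. I would prove this by induction on the length of the mutation sequence, the inductive step being a direct check against the three steps of Definition \ref{def:mutation} applied to some mutable vertex $k \neq i$. Since $j'$ is frozen we have $k \neq j'$, and because the arrow $j' \to i$ has neither endpoint equal to $k$, it is not reversed by step (2). For step (1), creating or deleting an arrow at $j'$ would require a 2-path $j' \to k \to \cdot$ or $\cdot \to k \to j'$ through $k$, but these paths cannot exist: the only arrow incident to $j'$ is $j' \to i$, and $k \neq i$, so no arrow of the form $j' \to k$ or $k \to j'$ is present. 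Step (3) cannot delete $j' \to i$ either, since any such deletion would require a parallel arrow $i \to j'$, which would again contradict uniqueness of $j' \to i$ in the previous quiver (no such arrow was created by step (1) for the same reason). Hence the arrow and its uniqueness at $j'$ persist.

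The conclusion then follows immediately. At every stage $R$ of any green sequence for $Q$, as long as $i$ has not yet been mutated, $R$ still has $j' \to i$ as the unique arrow incident to $j'$, so $i$ is still not green in $R$ and hence is red by Theorem \ref{thm:signcoh}. Consequently $i$ can never be selected as the next mutation vertex, completing the proof. The main (and essentially only) obstacle is the careful bookkeeping in the inductive step to confirm that none of the three bookkeeping operations of mutation can create a new arrow at $j'$ or remove the existing one; once that is clear, the rest is immediate from the definitions of red, green, and green sequence.
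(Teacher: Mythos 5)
Your proof is correct and follows the same route as the paper: show $i$ is not green (hence red by sign-coherence) and that the arrow $j'\to i$ persists under any mutation at a vertex other than $i$. The only difference is that you carefully verify the persistence claim against the three steps of Definition \ref{def:mutation}, which the paper asserts without detail.
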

\begin{proof}
Since $j'$ is a frozen vertex pointing at $i$, we have that $i$ is not green and by Theorem \ref{thm:signcoh} it must be red and therefore cannot be mutated at in a green sequence. Mutating at any vertex other than $i$ in the quiver will not affect the edge $j' \rightarrow i$, so this edge will persist through any mutation sequence and $i$ will always be red. 
\end{proof}

Translating this lemma into the language of laminations, it says that if there exists an arc $\alpha$ and a lamination $L$ such that $$b'_{\gamma}(T,L)=\begin{cases} 1 & \gamma=\alpha, \\ 0 & \gamma \neq \alpha.\end{cases}$$ then $\alpha$ will be red in any green mutation sequence for $T$.

\section{Cycle Lemma and Independent mutation sequence}\label{sec:cycleindep}

In any triangulation with at least three distinct arcs (none of which are loops) incident to a puncture, these arcs form an oriented cycle in the corresponding quiver. The mutation sequences given in this paper make use of the following maximal green sequence for oriented cycles.

\begin{definition}
Let $i$ and $j$ be vertices of a quiver $Q$. Suppose $\mu$ is some mutation sequence for $Q$. Let $\overline{\mu(Q)}$ denote the quiver $\mu(Q)$ with the relabelling of the vertices that fixes the label on vertex $k$ if $k \neq i,j;$ but relabels $i$ as $j$ and vice versa. 
We say that $\mu$ \textbf{interchanges $i$ and $j$} if $Q = \overline{\mu(Q)}$. 
\end{definition}
\begin{lemma}[Cycle Lemma]{\cite[Lemma 4.2]{bucher}} \label{cycle}
Let $C$ be a quiver that is an oriented $n$-cycle with vertices labeled $1,\ldots,n$, and edges $i \rightarrow (i-1)$ for $ 2 \leq i \leq n$ and $1 \rightarrow n$. Define the mutation sequence $$\mucycle:=(n,{n-1}\ldots ,2,1, 3 ,4, \ldots, ,{n-1},n).$$ Then $\mucycle$ is a maximal green sequence for $C$ that interchanges 1 and 2. 
\end{lemma}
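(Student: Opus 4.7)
The plan is to verify the claim by a direct computation of the framed quiver $\hat{C}$ through the stated mutation sequence, maintaining at each step an explicit inductive description of both the mutable and the frozen arrows, and checking greenness along the way. I split $\mucycle$ into three parts: Phase I consists of $\mu_n, \mu_{n-1}, \ldots, \mu_2$; Phase II is the single mutation $\mu_1$; Phase III is $\mu_3, \mu_4, \ldots, \mu_n$.

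For Phase I I would prove by descending induction on $k$ (from $n$ down to $2$) that after applying $\mu_n \mu_{n-1} \cdots \mu_k$ the framed quiver has a specific explicit shape, and in particular the vertex about to be mutated at each step has no incoming frozen arrow and is therefore green. At the end of Phase I the mutable part consists of the path $n \to (n-1) \to \cdots \to 3 \to 2$ together with the extra arrow $3 \to 1$, and the frozen arrows are $1 \to 1'$, $2' \to 2$, $j' \to j$ for $3 \le j \le n$, and $1 \to j'$ for $3 \le j \le n$. Phase II is then a single computation: vertex $1$ is green, and mutation at $1$ reverses the incoming arrow $3 \to 1$ to $1 \to 3$, creates new outgoing arrows $3 \to 1'$ and $3 \to j'$ for $4 \le j \le n$ (the candidate $3 \to 3'$ cancels with the existing $3' \to 3$), and flips all frozen edges incident to $1$.

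For Phase III I would prove by ascending induction on $j$ (from $3$ to $n$) that after $\mu_3 \mu_4 \cdots \mu_j$ the mutable part consists of the triangle $j \to (j+1) \to 2 \to j$ (when $j < n$), the residual path $n \to (n-1) \to \cdots \to (j+2) \to (j+1)$, and the branch path $j \to (j-1) \to \cdots \to 3 \to 1 \to 2$; and that the frozen arrows are $2' \to 2$, $3' \to 1$, $i' \to (i-1)$ for $4 \le i \le j$, $1' \to j$, $k' \to j$ for $j+1 \le k \le n$, $(j+1) \to 1'$, and $(j+1) \to k'$ for $j+2 \le k \le n$. The hypothesis makes it immediate that vertex $j+1$ has no incoming frozen arrow and is therefore green at the moment of its mutation. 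When $j = n$ the triangle and residual path degenerate, and the mutable part is the oriented cycle $2 \to n \to (n-1) \to \cdots \to 3 \to 1 \to 2$, which coincides with $C$ after relabeling $1 \leftrightarrow 2$; the frozen list reduces to $2' \to 2$, $3' \to 1$, $i' \to (i-1)$ for $4 \le i \le n$, and $1' \to n$, so every mutable vertex has an incoming frozen arrow and is red. This simultaneously shows that $\mucycle$ is a maximal green sequence and that it interchanges $1$ and $2$.

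The main obstacle is the bookkeeping in Phase III: at each mutation $\mu_{j+1}$ many 2-paths pass through vertex $j+1$ to both mutable and frozen neighbors, and several pairs of oppositely oriented arrows are created that must cancel simultaneously with the existing frozens in exactly the way needed to preserve the inductive description. The inductive hypothesis must therefore track the in- and out-frozen neighbors of every vertex rather than just the mutable structure, because those govern greenness at the next step and dictate which of the newly created arrows survive.
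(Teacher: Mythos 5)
First, note that the paper does not prove this lemma at all: it is quoted verbatim from Bucher \cite[Lemma 4.2]{bucher}, so there is no in-paper proof to compare against. Your three-phase direct verification is the natural way to prove it, and I checked your claimed states at the end of Phase I, after Phase II, and at the terminal stage $j=n$ against explicit computations for $n=3,4,5$; all of those are correct, as are the greenness checks and the identification of the final mutable part with $C$ after swapping $1$ and $2$.

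There is, however, one concrete defect in the Phase III inductive hypothesis, and it is exactly of the kind you warned yourself about. At stage $j$ (after $\mu_3\cdots\mu_j$) the frozen part also contains the arrows $k'\to k$ for $j+2\le k\le n$: these are the framing arrows of the vertices not yet revisited in Phase III, which were reversed back in Phase I and have not been touched since (e.g.\ for $n=5$, $j=3$ the arrow $5'\to 5$ is present but absent from your list). Your stated hypothesis omits them, and without them the induction does not close: when you perform $\mu_{j+1}$, the $2$-path $(j+2)\to(j+1)\to(j+2)'$ creates an arrow $(j+2)\to(j+2)'$, and your stage-$(j+1)$ description requires this arrow to be absent, which happens only because it cancels against the unlisted $(j+2)'\to(j+2)$. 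So as written, the inductive step produces a quiver that contradicts the inductive claim. The fix is purely local --- add $k'\to k$ for $j+2\le k\le n$ to the hypothesis and observe that each such arrow is consumed precisely at the step $\mu_{k-1}$ by the cancellation just described --- after which every cancellation and reversal in your $\mu_{j+1}$ computation goes through and the argument is complete. Since these extra arrows never point at the vertex $j+1$ being mutated, none of your greenness conclusions are affected, and they have all disappeared by stage $j=n$, so your terminal state and the maximality/interchange conclusions stand.
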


It is helpful in the next section to understand how the Cycle Lemma affects triangulations. 

\begin{lemma}\label{lem:cyclesurface}
Let $T$ be a triangulation of a marked surface. Suppose that a puncture $P$ is incident to at least three distinct arcs, none of which are loops, so these arcs correspond to an oriented cycle in $Q_T.$ Let $\mucycle^P$ be the mutation sequence from Lemma \ref{cycle} for this oriented cycle. Then $\mucycle^P(T)$ coincides with $T$ (up to relabelling of arcs) except all of the taggings of the arcs at $P$ differ. 
\end{lemma}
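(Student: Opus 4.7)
The plan is to trace the effect of $\mucycle^P$ on $T$ explicitly, splitting the sequence $(n,n-1,\ldots,2,1,3,4,\ldots,n)$ into three phases and using Lemma \ref{lem:mut=flip} to realize each quiver mutation as a flip of the triangulation. Label the arcs of $T$ incident to $P$ in cyclic order as $\alpha_1,\ldots,\alpha_n$, with $\alpha_i$ running from $P$ to a vertex $v_i$, let $\beta_i$ denote the third edge of the triangle of $T$ containing $\alpha_i$ and $\alpha_{i+1}$ (indices mod $n$), and assume without loss of generality that every $\alpha_i$ is tagged plain at $P$.

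\emph{Phase 1: flip $\alpha_n,\alpha_{n-1},\ldots,\alpha_3$.} I would prove by downward induction on $k$ that after flipping $\alpha_n,\ldots,\alpha_{k+1}$, the arcs still incident to $P$ are exactly $\alpha_1,\ldots,\alpha_k$, and the triangles at $P$ are $(P,v_1,v_2),\ldots,(P,v_{k-1},v_k)$ together with one ``wrap-around'' triangle $(P,v_k,v_1)$ whose third edge is a new arc $\delta_{k+1}$ from $v_k$ to $v_1$ created by the previous flip. Each flip in Phase 1 happens in an honest quadrilateral, so the ordinary diagonal-flip rule applies and every new arc lies off $P$. At the end of Phase 1, $P$ sits inside a once-punctured digon bounded by $\beta_1$ and $\delta_3$, triangulated by the two plain arcs $\alpha_1,\alpha_2$.

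\emph{Phase 2: flip $\alpha_2,\alpha_1$.} Both flips are degenerate, since the ``quadrilateral'' around each of $\alpha_1,\alpha_2$ collapses in the once-punctured digon. Using the tagged-arc flip rules for a once-punctured digon, flipping $\alpha_2$ turns $\{\alpha_1 \text{ plain}, \alpha_2 \text{ plain}\}$ into the self-folded configuration $\{\alpha_1 \text{ plain}, \alpha_1^{\bowtie} \text{ notched}\}$, and flipping $\alpha_1$ then turns this into $\{\alpha_2^{\bowtie} \text{ notched}, \alpha_1^{\bowtie} \text{ notched}\}$. Thus both arcs inside the digon are now notched at $P$, and their labels have been swapped (label $\alpha_1$ now points to $\alpha_2^{\bowtie}$, label $\alpha_2$ to $\alpha_1^{\bowtie}$), which is exactly the interchange of $1$ and $2$ predicted by Lemma \ref{cycle}.

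\emph{Phase 3: flip $\alpha_3,\ldots,\alpha_n$.} I would show by upward induction on $k$ that each flip undoes the corresponding flip from Phase 1: the arc currently at label $\alpha_k$ is $\delta_k$, the quadrilateral around it has corners $P,v_1,v_k,v_{k-1}$, and its new diagonal is an arc from $P$ to $v_k$ which must be notched at $P$ for compatibility with the already-notched arcs at $P$, and hence equals $\alpha_k^{\bowtie}$. After Phase 3 the cyclic configuration at $P$ is $\alpha_1^{\bowtie},\alpha_2^{\bowtie},\ldots,\alpha_n^{\bowtie}$ in the original cyclic order (with the labels in positions $1$ and $2$ swapped), every $\delta_k$ has been flipped back, and every $\beta_i$ and every arc of $T$ disjoint from $P$ was never touched. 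Hence $\mucycle^P(T)$ agrees with $T$ off of $P$ and has all tags at $P$ flipped, up to the relabeling $1\leftrightarrow 2$, as claimed. The main obstacle is Phase 2: those flips are not ordinary diagonal flips, and correctly identifying their outcome requires the tagged-triangulation conventions of Fomin--Shapiro--Thurston in a once-punctured digon rather than a naive quadrilateral computation.
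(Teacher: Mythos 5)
Your proposal is correct and follows essentially the same route as the paper's proof: your Phase 1 is the paper's prefix $\lambda=(\alpha_n,\dots,\alpha_3)$ that isolates $P$ in a once-punctured digon, your Phase 2 is the paper's observation that $\mu_{\alpha_1}\mu_{\alpha_2}$ passes through the radial-puncture configuration and leaves both arcs notched with labels $1$ and $2$ interchanged, and your Phase 3 is the paper's ``mutate the other arcs back into place, forced to be notched by compatibility.'' You simply make explicit the inductive bookkeeping of the intermediate triangulations that the paper leaves implicit.
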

\begin{proof}
Let $P$ and $T$ be as above. Without loss of generality we assume that all of the arcs incident to $P$ are tagged plain. Suppose $\mucycle^P = (\alpha_n, \dots, \alpha_2,\alpha_1, \alpha_3, \dots, \alpha_n).$ Let $\lambda = (\alpha_n , \dots , \alpha_3)$ be the first part of $\mucycle^P$. Note that in $\mu_{\alpha_2}\lambda(T)$ we have that $P$ is a radial puncture incident to $\alpha_1$ and $\alpha_2$, with $\alpha_2$ notched at $P$. Now when we mutate $\alpha_1$ it will again be incident to $P$ and must be tagged at $P$ by the compatibility rules for tagged arcs. Furthermore, the triangulation $\mu_{\alpha_1}\mu_{\alpha_2}\lambda(T)$ is identical to $\lambda(T)$ except that $\alpha_1$ and $\alpha_2$ are now notched at $P$. It follows then that the rest of $\mucycle^P$ mutates all the other arcs back into place, but by the compatibility rules for tagged arcs they must all be notched at $P$.
\end{proof}
In the degenerate case when there are exactly two arcs incident to a puncture $P$, the corresponding vertices in the quiver are not connected by an edge. If $P$ is not a radial puncture then we define $\mucycle^P:=(1,2)$ and note that the result of Lemma \ref{lem:cyclesurface} still applies in this case. 

\subsection{Independent mutation sequence}
Br\'ustle and Qiu showed that it is necessary that a maximal green sequence for a triangulation of a surface with punctures must change the tagging at every puncture \cite{brustleqiu}. One might hope that we could apply $\mucycle$ to every puncture of a closed marked surface to obtain a maximal green sequence, but that is not the case. 

The two issues with this approach are when there are two distinct punctures $P$ and $R$ with an arc between them and loops at punctures. The mutation sequence $\mucycle^R\mucycle^P$ would not be a green sequence as the arc between $P$ and $R$ will be red in $\mucycle^P(T)$. 
These problems motivate the following mutation sequence. This sequence mutates us to a triangulation where every puncture in a designated proper subset of punctures will not be the base point of any loops, and will not share an arc with any other puncture in the subset. Then we may proceed to apply the Cycle Lemma to each oriented cycle around each puncture in this subset. 

\begin{definition}\label{def:independent}
Let $P$ and $R$ be two not necessarily distinct punctures of a closed surface with at least two punctures. We say that $P$ is \textbf{independent} of $R$ in $T$ if there is no arc in $T$ with one endpoint at $P$ and the other at $R$. A set $\mathcal{P}$ of marked points is called \textbf{independent} in $T$ if for any two not necessarily distinct points $P,R \in \mathcal{P}$ we have $P$ is independent of $R$.
\end{definition}

\begin{definition}\label{def:indpath} Let $\alpha$ be an arc. Let $\iota(\alpha)$ denote the underlying untagged arc of $\alpha$ if $\alpha$ is not incident to a radial puncture, or incident to a radial puncture with both endpoints tagged the same way. If $\alpha$ is incident to a radial puncture and its endpoints have different taggings let $\iota(\alpha)$ be a loop enclosing the radial puncture based at the other endpoint of $\alpha$. 

Let $T$ be a triangulation of a closed marked surface and $\mathcal{P}$ a proper subset of its punctures.
Define $$E_T^\mathcal{P}:=\{\alpha \in T | \iota(\alpha) \text{ has two endpoints in } \mathcal{P}\}.$$
An \textbf{independence path} for $\alpha \in E_T^\mathcal{P}$ is a path from some point $x_\alpha \in \alpha$ to some puncture not in $\mathcal{P}$, such that the path is disjoint from punctures of $\mathcal{P}$ and disjoint from any arcs not contained in $E_T^\mathcal{P}$. 

\begin{example}\label{ex:indpath}
Consider the triangulation given in Figure \ref{fig:indpath}. Take $\mathcal{P}=\{P_i\}_{i=1}^8$. The set $$E_T^\mathcal{P}=\{3,4,5,6,7,8,9,10,11,12,13,14,15,16,17,18,\alpha,\beta\}.$$ An independence path for the arc labelled $\alpha$ is shown in the figure in green. Note that the arc $\beta \in E_T^\mathcal{P}$ since $\iota(\beta)$ is a loop based at $P_1$ enclosing the puncture $S$. We draw $\iota(\beta)$ on the triangulation in blue. 
\end{example}
\begin{figure}[t]
\begin{tikzpicture}[font=\tiny]
\coordinate (N) at (7,4);
\coordinate (S) at (-5,-4);
\coordinate (E) at (7,-4);
\coordinate (W) at (-5,4);
\fill (N) node[right]{$P_0$} circle (1.5pt);
\fill (S) node[left]{$P_0$} circle (1.5pt);
\fill (E) node[right]{$P_0$} circle (1.5pt);
\fill (W) node[left]{$P_0$} circle (1.5pt);

\coordinate (p1) at (3,0);
\coordinate (p2) at (0,3);
\coordinate (p3) at (-3,0);
\coordinate (p4) at (0,-3);
\coordinate (p5) at (1,1);
\coordinate (p6) at (-1,1);
\coordinate (p7) at (-1,-1);
\coordinate (p8) at (1,-1);
\coordinate (p9) at (5,0);

\fill (p1) node[above]{$P_1$} circle (1.5pt);
\fill (p2) node[above]{$P_2$} circle (1.5pt);
\fill (p3) node[left]{$P_3$} circle (1.5pt);
\fill (p4) node[below]{$P_4$} circle (1.5pt);
\fill (p5) node[above right]{$P_5$} circle (1.5pt);
\fill (p6) node[above left]{$P_6$} circle (1.5pt);
\fill (p7) node[below left]{$P_7$} circle (1.5pt);
\fill (p8) node[below right]{$P_8$} circle (1.5pt);
\fill (p9) circle (1.5pt);

\draw (N) to node[fill=white]{1} (E) to node[fill=white]{2} (S) to node[fill=white]{1} (W) to node[fill=white]{2} (N);
\draw (p1) to node[above right]{3} (p2) to node[above left]{4} (p3) to node[below left]{5} (p4) to node[below right]{6} (p1);
\draw (p5) to node[fill=white]{7} (p6) to node[fill=white]{8} (p7) to node[fill=white]{9} (p8) to node[fill=white]{10} (p5);
\draw (p1) to node[fill=white]{11} (p5) to node[fill=white]{12} (p2) to node[fill=white]{13} (p6) to node[fill=white]{14} (p3) to node[fill=white]{15} (p7) to node[fill=white]{16} (p4) to node[fill=white]{17} (p8) to node[fill=white]{18} (p1);
\draw (p5) to node[near start,fill=white]{$\alpha$}(p7);
\draw (N) to node[fill=white]{19} (p2) to node[fill=white]{20} (W) to node[fill=white]{21} (p3) to node[fill=white]{22} (S) to node[fill=white]{23} (p4) to node[fill=white]{24} (E) to node[fill=white]{25} (p1) to node[fill=white]{26} (N);
\draw (p1)[bend left] to node[midway,fill=white] {$\beta$} node[near end,sloped, rotate=90]{$\bowtie$} (p9) node[right]{$S$} to[bend left] node[fill=white]{27} (p1);
\draw (p1) to[out=-40,in=180] (5.3,-.8) to[out=0,in=240] node[fill=white]{28} (N);
\draw[dashed,color = blue, very thick] (p1) to[out=40,in=90] (5.5,0) to[out=-90,in=-35] (p1);
\draw[color=blue] (5,1) node {$\iota(\beta)$};
\draw[dashed, color= darksagegreen, very thick] (0,0) to[out=90,in=-45] (W);
\fill[color= darksagegreen] (0,0) node[below right]{$x_\alpha$}circle (2pt);
\end{tikzpicture}
\caption{A triangulation of a torus with 10 punctures. An independence path for the arc labelled $\alpha$ is shown in green, and $\iota(\beta)$ is shown in blue.}\label{fig:indpath}
\end{figure}
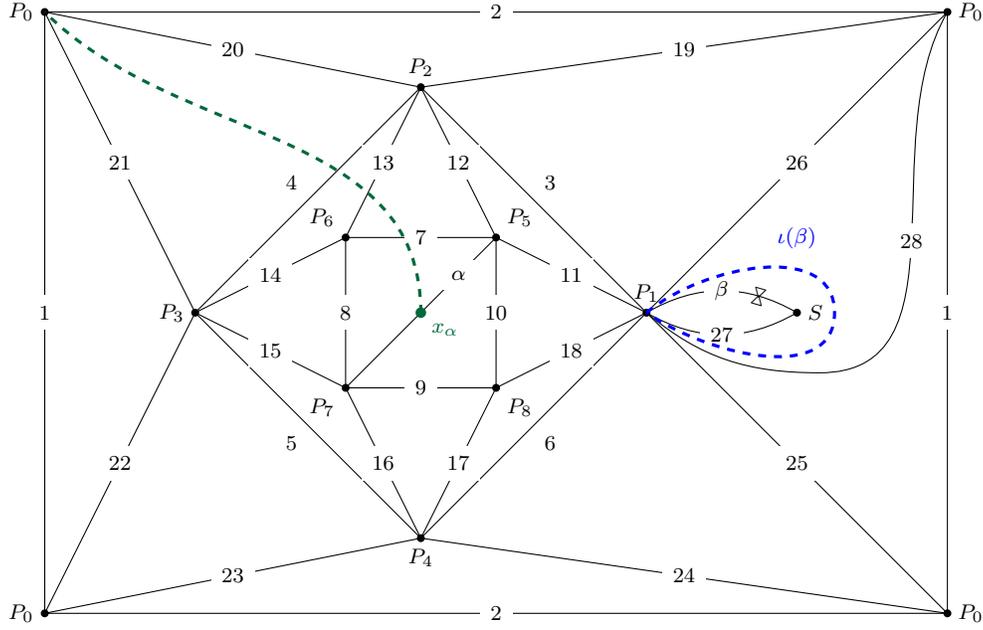


\begin{lemma}\label{lem:indpathexists}
For any triangulation $T$, proper subset of punctures $\mathcal{P}$, and any arc $\alpha \in E_T^\mathcal{P}$ there exists an independence path for $\alpha$. 
\end{lemma}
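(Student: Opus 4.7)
The plan is to argue combinatorially: identify the region of triangles reachable from $\alpha$ by crossing only arcs in $E_T^{\mathcal{P}}$, and show that this region must contain a corner at a puncture outside $\mathcal{P}$. Concretely, let $\mathcal{T}$ be the collection of triangles of $T$ obtainable from either of the two triangles incident to $\alpha$ by a chain of triangles in which consecutive triangles share an edge belonging to $E_T^{\mathcal{P}}$, and let $U = \bigcup_{t \in \mathcal{T}} t \subseteq S$. By construction, starting from $x_\alpha \in \alpha$ one can travel to any point of $U$ along a curve that crosses only arcs in $E_T^{\mathcal{P}}$, so it suffices to show that some triangle of $\mathcal{T}$ has a corner at a puncture $Q \notin \mathcal{P}$. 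Given such a $Q$, the independence path is obtained by concatenating segments through the interiors of the triangles along a chain from $\alpha$ to a triangle having $Q$ as a corner, perturbing slightly to avoid the finitely many other punctures of $\mathcal{P}$ that appear as corners along the way.

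To produce the corner outside $\mathcal{P}$, suppose for contradiction that every corner of every triangle in $\mathcal{T}$ lies in $\mathcal{P}$. If $\mathcal{T}$ is the full set of triangles of $T$, then every puncture of the closed surface, being a corner of some triangle, lies in $\mathcal{P}$, contradicting the properness of $\mathcal{P}$. Otherwise $\mathcal{T}$ is a proper non-empty subset of triangles; since $S$ is connected (equivalently, the dual graph of $T$ is connected), some arc $\beta$ is shared by a triangle $t \in \mathcal{T}$ and a triangle $t' \notin \mathcal{T}$, and the maximality in the definition of $\mathcal{T}$ forces $\beta \notin E_T^{\mathcal{P}}$. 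A short case check on $\iota(\beta)$, splitting on whether $\beta$ is incident to a radial puncture with endpoints tagged differently, shows that $\beta \notin E_T^{\mathcal{P}}$ always forces at least one endpoint $v$ of $\beta$ itself to lie outside $\mathcal{P}$. But $v$ is then a corner of $t \in \mathcal{T}$, contradicting the standing assumption.

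The main obstacle I anticipate is the case analysis hidden inside the definition of $\iota$: a bounding arc $\beta$ of $U$ might be incident to a radial puncture not in $\mathcal{P}$, and one must verify that this still forces $\beta$ to have some endpoint outside $\mathcal{P}$. Once this bookkeeping is done, the remaining ingredients, namely the existence of a separating arc when $\mathcal{T}$ is a proper subset of the triangles and the explicit construction of the path through the interior of $U$ avoiding finitely many exceptional points, are routine topological arguments.
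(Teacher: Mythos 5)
Your proof is correct and is essentially the same argument as the paper's: the paper also argues by contradiction via connectedness, showing that if $\alpha$ had no independence path then (propagating through shared triangles) no arc of $E_T^\mathcal{P}$ could lie in a triangle with a vertex outside $\mathcal{P}$, forcing every puncture into $\mathcal{P}$ and contradicting properness. Your phrasing via the region $U$ of reachable triangles and the boundary arc $\beta$, together with the explicit check that $\beta\notin E_T^\mathcal{P}$ yields an endpoint outside $\mathcal{P}$ even in the radial-puncture case, is just a more detailed rendering of the same connectivity argument.
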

\begin{proof}
Let $T$, $\mathcal{P}$, and $\alpha$ be as above. Note two simple facts about independence paths. First, any arc in $E_T^\mathcal{P}$ that is an edge of a triangle which contains a vertex not in $\mathcal{P}$ has an independence path. Second, if one arc in a triangle has an independence path then every other arc in the triangle that is in $E_T^\mathcal{P}$ has an independence path. 

Now suppose that some arc $\alpha \in E_T^\mathcal{P}$ does not have an independence path. Then any arc that shares a triangle with $\alpha$ also does not have an independence path. Continuing inductively, no arc in $E_T^\mathcal{P}$ is an edge of a triangle which has vertex not in $\mathcal{P}$. But since our marked surface is connected this is only possible if $T = E_T^\mathcal{P}$ or rather every puncture is in $\mathcal{P}$. A contradiction to our stipulation that $\mathcal{P}$ is a proper subset of punctures. 
\end{proof}
\end{definition} 
We define $$\sigma_T^{\mathcal{P}}(\alpha)= \inf \{ \text{number of arcs of } E_T^\mathcal{P} \text{ that are crossed by a separation path for } \alpha \}.$$
Note that $\sigma_T^{\mathcal{P}}$ is well-defined by Lemma \ref{lem:indpathexists} and $\sigma_T^{\mathcal{P}}(\alpha) \geq 0$ where equality holds if and only if $\alpha$ is in a triangle with a vertex not in $\mathcal{P}$.
\begin{construction}[Construction of Sequence for independence]\label{lem:sep}
Index the arcs of $E_T^\mathcal{P}=\{\alpha_i\}_{i=1}^m$ so that $i < j $ if and only if $\sigma_T^{\mathcal{P}}(\alpha_i) \leq \sigma_T^{\mathcal{P}} (\alpha_j)$. Then we define the mutation sequence $$\musep^{\mathcal{P}} := (\alpha_1, \dots, \alpha_n).$$ 
\end{construction}

\begin{example}
Continuing with Example \ref{ex:indpath} the triangulation given in Figure \ref{fig:indpath} we have $\sigma_T^\mathcal{P}(\gamma)=0$ if $\gamma \in \{3,4,5,6,\beta\}$. The function $\sigma_T^\mathcal{P}(\gamma)=1$ for $\gamma \in \{11,12,13,14,15,16,17,18\}$, $\sigma_T^\mathcal{P}(\gamma)=2$ for $\gamma \in \{7,8,9,10\}$ and finally $\sigma_T^\mathcal{P}(\alpha)=3.$ Therefore one possible mutation sequence for $\musep^\mathcal{P}$ would be $$\musep^\mathcal{P}=(3,4,5,6,\beta,11,12,13,14,15,16,17,18,7,8,9,10,\alpha).$$ We can rearrange the order of the arcs in the mutation sequence that take the same value on $\sigma_T^\mathcal{P}$ to obtain other mutation sequences. 
\end{example}

\begin{lemma}\label{lem:ind}
 The set of punctures $\mathcal{P}$ is an independent set in $\musep^\mathcal{P}(T)$.
\end{lemma}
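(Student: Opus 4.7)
The plan is to prove, by induction on $i$, the joint invariant that after the first $i$ flips of $\musep^\mathcal{P}$ the current triangulation $T_i$ has (a) $\mathcal{P}$-arcs exactly $\{\alpha_{i+1},\dots,\alpha_n\}$ and (b) if $i<n$, the arc $\alpha_{i+1}$ lies in a triangle of $T_i$ whose third vertex is not in $\mathcal{P}$. The lemma is the $i=n$ case of (a). The two halves reinforce one another: (b) at step $i$ forces the flip at step $i+1$ to replace $\alpha_{i+1}=uv$ (with $u,v\in\mathcal{P}$) by the opposite diagonal $VW$ of the surrounding quadrilateral, where $V\notin\mathcal{P}$; this new arc is not a $\mathcal{P}$-arc, so (a) propagates from step $i$ to step $i+1$.

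For the base case $i=0$, (a) is the definition of $E_T^\mathcal{P}$. To see (b), consider the subcomplex of $T$ consisting of triangles all of whose vertices lie in $\mathcal{P}$; this is a closed subcomplex of the connected marked surface and cannot equal the whole surface, because $\mathcal{P}$ is proper. Hence either $E_T^\mathcal{P}=\emptyset$, in which case the lemma is vacuous, or this subcomplex has a boundary arc, and such an arc has $\sigma_T^\mathcal{P}=0$; the ordering convention then forces $\sigma_T^\mathcal{P}(\alpha_1)=0$, which is exactly (b).

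For the inductive step set $\gamma=\alpha_{i+2}$ and $k=\sigma_T^\mathcal{P}(\gamma)$, and fix a minimal independence path $p$ from $x_\gamma$ to a non-$\mathcal{P}$ puncture $V$, crossing $\beta_1,\dots,\beta_k\in E_T^\mathcal{P}$ in order. The tail of $p$ past the $j$-th crossing is an independence path for $\beta_j$ of length $k-j$, so $\sigma_T^\mathcal{P}(\beta_j)\le k-j<k$, and hence every $\beta_j$ precedes $\gamma$ in $\musep^\mathcal{P}$ and has already been flipped by step $i+1$. The triangles $T_0,\dots,T_k$ of $T$ that $p$ traverses form a strip whose boundary is abstractly a $(k+3)$-gon; inside this polygon the only vertex not in $\mathcal{P}$ is $V$, and its $k$ interior diagonals are precisely the $\beta_j$'s. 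A short induction on $k$ then shows that flipping all $k$ diagonals of this polygon in any order produces a retriangulation that contains a triangle $\Delta(u,v,V)$ adjacent to $\gamma=uv$. Since the two non-$\gamma$ edges of this triangle terminate at $V\notin\mathcal{P}$, they do not belong to $E_T^\mathcal{P}$ and so are never flipped in $\musep^\mathcal{P}$, guaranteeing that $\Delta(u,v,V)$ persists up to step $i+1$ and delivers (b).

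The main obstacle I anticipate is that boundary arcs of the strip may themselves lie in $E_T^\mathcal{P}$, and by the ordering a boundary arc in $S_0\cup\dots\cup S_{k-1}$ will be flipped before $\gamma$, potentially perturbing the strip before all of $\beta_1,\dots,\beta_k$ are flipped. The resolution is to observe that each such boundary flip still acts within the same $(k+3)$-gon, preserves its vertex set (including $V$), and only rearranges its interior diagonals; by tracking which triangle of the evolving polygon triangulation has $V$ as a vertex through the interleaved sequence of flips, one concludes that the triangle $\Delta(u,v,V)$ still appears once every $\beta_j$ has been flipped, completing the induction.
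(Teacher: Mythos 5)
Your induction is genuinely different from the paper's: you induct one flip at a time on the position in $\musep^{\mathcal{P}}$ and control the local picture around the next arc via the strip of triangles traversed by a minimal independence path, whereas the paper inducts on $\max\{\sigma_T^{\mathcal{P}}(\alpha)\}$ and processes the arcs in batches, arguing that after all arcs with $\sigma$-value $0$ are flipped every remaining arc of $E_T^{\mathcal{P}}$ has its $\sigma$-value drop by exactly one, so that the tail of the sequence is again one produced by Construction \ref{lem:sep} for the new triangulation. Your base case is fine, as is the observation that each $\beta_j$ crossed by the path satisfies $\sigma_T^{\mathcal{P}}(\beta_j)\le k-j<k$ and is therefore flipped before $\gamma$.

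The step where you dispose of the ``main obstacle,'' however, fails. A boundary arc $e_j$ of the strip is not an interior diagonal of the $(k+3)$-gon, and flipping it does not act within that polygon: the flip is performed in the quadrilateral formed by the strip triangle containing $e_j$ together with the triangle on the \emph{other} side of $e_j$, which lies outside the strip. The replacement arc joins the apex of the inside triangle to the apex of that outside triangle --- a puncture that in general is not a vertex of your polygon at all and may or may not lie in $\mathcal{P}$. So such a flip neither preserves the polygon's vertex set nor ``only rearranges its interior diagonals''; it destroys the strip as a union of triangles of the current triangulation, and afterwards the triangles adjacent to the nearby $\beta_j$'s are no longer the ones your polygon computation assumes. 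Since these boundary arcs do lie in $E_T^{\mathcal{P}}$ (all their endpoints are strip vertices in $\mathcal{P}$) and satisfy $\sigma_T^{\mathcal{P}}(e_j)\le k-j$, they really are flipped before $\gamma$, interleaved with the $\beta_j$'s, so the issue cannot be waved away. (A smaller concern: the strip need not embed in the surface --- triangles and edges can repeat and $\gamma$ or the $\beta_j$ may be loops --- so even identifying it with an honest $(k+3)$-gon, and the unproved claim that flipping all its diagonals in any order yields the triangle $uvV$, needs justification.) Repairing this requires tracking how the triangle adjacent to $\gamma$ evolves under the full interleaved sequence, which is essentially what the paper compresses into its assertion that $\sigma$ decreases by exactly one after each batch.
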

\begin{proof}
We show the claim by inducting on $\max\{\sigma_T^{\mathcal{P}}(\alpha) | \alpha \in E_T^\mathcal{P}\}$. 
Suppose $\sigma_T^{\mathcal{P}}$ takes the value 0 for all arcs in $E_T^\mathcal{P}$. Let $\alpha \in E_T^\mathcal{P}$. Then $\sigma_T^{\mathcal{P}}(\alpha) = 0$ if and only if $\alpha$ is in a triangle with a vertex not in $\mathcal{P}$, so let $R \not \in \mathcal{P}$ be a puncture that is a vertex of a triangle with $\alpha$ as an edge. Since $\alpha$ has both endpoints in $\mathcal{P}$ and is contained in a triangle with $R$ the arc obtained from flipping $\alpha$ will have an endpoint at $R$ so it will not be in $E_{\mu_\alpha(T)}^\mathcal{P}$. So $E_{\musep^\mathcal{P}(T)}^\mathcal{P}$ is empty and therefore $\mathcal{P}$ must be an independent set, so the claim is true when $\max\{\sigma_T^{\mathcal{P}}(\alpha) | \alpha \in E_T^\mathcal{P}\}=0$. 

Assume that the claim is true for $k < \max\{\sigma_T^{\mathcal{P}}(\alpha) | \alpha \in E_T^\mathcal{P}\}$. 
Let $\lambda=(\alpha_1,\dots,\alpha_j)$ be the initial part of $\musep^\mathcal{P}$ that runs over all arcs in $E_T^\mathcal{P}$ where $\sigma_T^{\mathcal{P}}$ takes value 0. Let $\chi$ be the remaining part of $\musep^\mathcal{P}$ so that $\mu_\chi\mu_\lambda=\musep^\mathcal{P}.$ Observe that $\lambda$ cannot be empty since $\mathcal{P}$ is a proper subset of punctures of the surface.
It follows easily that for any arc $\alpha \in E_{{\mu_\lambda}(T)}^\mathcal{P}$ we have $\sigma_{\mu_\lambda(T)}^\mathcal{P}(\alpha)=\sigma_{T}^\mathcal{P}(\alpha)-1$ since any independence path for such an arc must have crossed one of the $\alpha_\ell$ in $T$, but will no longer cross the mutated arc $\alpha'_\ell$ in $\mu_\lambda(T)$. Furthermore the arcs of $E^\mathcal{P}_{\mu_\lambda(T)}$ are indexed by increasing value of $\sigma_{\mu_\lambda(T)}^\mathcal{P}$ so $\mu_\chi$ is a sequence that could be constructed in Construction \ref{lem:sep} so by our inductive hypothesis $\mathcal{P}$ is in an independent set of punctures in $\mu_\chi(\mu_\lambda(T)).$ But $\mu_\chi\mu_\lambda = \musep^\mathcal{P}$ so we have proven the claim. 

\end{proof}
\begin{lemma}\label{lem:greenind}
If every arc of $E_{T}^\mathcal{P}$ is green, then $\musep$ is a green sequence. 
\end{lemma}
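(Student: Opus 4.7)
The plan is to reduce Lemma \ref{lem:greenind} to a standard monotonicity principle: mutation at a green mutable vertex preserves the greenness of every other mutable vertex. Once this is granted, the ordering of $\musep$ by $\sigma_T^{\mathcal{P}}$ plays no role for Lemma \ref{lem:greenind} itself (that ordering matters only in Lemma \ref{lem:ind}); one simply mutates the arcs of $E_T^\mathcal{P}$ in the order $\musep$ prescribes and verifies that each is still green when its turn comes.

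The first step is to verify the monotonicity using the quiver formulation of Definition \ref{def:green1}. Being green means no frozen vertex has an arrow pointing at the vertex. When we mutate at a green vertex $\alpha$: Step 1 of Definition \ref{def:mutation} adds arrows $i\to j$ for every 2-path $i\to\alpha\to j$, but since $\alpha$ is green no such $i$ can be frozen, so no new frozen-to-mutable arrows are produced. Step 2 reverses only the arrows incident to $\alpha$; the arrows $\alpha\to f$ become $f\to\alpha$, which turns $\alpha$'s color from green to red but does not touch the frozen neighbors of any other mutable vertex. Step 3 only deletes arrows. Therefore every mutable vertex $\beta\neq\alpha$ retains exactly the same set of frozen vertices pointing at it, and its color is unchanged. (Equivalently, a direct check on the extended $B$-matrix formula $b'_{\beta L}=b_{\beta L}+\mathrm{sgn}(b_{\beta\alpha})[b_{\beta\alpha}\,b_{\alpha L}]_+$ shows the correction is nonpositive when $b_{\alpha L}\le 0$, so $b'_{\beta L}\le b_{\beta L}\le 0$.)

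The induction then writes itself. List $\musep=(\alpha_1,\ldots,\alpha_m)$; by hypothesis every $\alpha_j$ is green in $T$. Assume inductively that the partial sequence $\mu_{\alpha_{i-1}}\circ\cdots\circ\mu_{\alpha_1}$ has been carried out as a green sequence. Applying the monotonicity observation $(i-1)$ times in succession (each step is a mutation at a vertex that was green at the moment it was mutated) shows that $\alpha_i$, which has not yet been touched, is still green after these mutations. Hence $\alpha_i$ can be mutated as a green vertex, and by induction $\musep$ is a green sequence. The only delicate point is the monotonicity bookkeeping; beyond that the argument is entirely formal, and no geometric input about triangulations or independence paths is needed for this lemma.
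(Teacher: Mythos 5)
Your proof is correct and follows essentially the same route as the paper: the paper's own proof is precisely the observation that a green mutation turns only the mutated vertex from green to red, together with the fact that each arc of $E_T^{\mathcal{P}}$ is mutated exactly once, which is the monotonicity-plus-induction argument you give (you merely supply the verification of the monotonicity step from Definitions \ref{def:mutation} and \ref{def:green1} that the paper asserts without proof). One small overstatement: a mutable vertex $\beta\neq\alpha$ need not retain ``exactly the same set of frozen vertices pointing at it'' --- a red $\beta$ can lose such arrows through the $2$-cycle cancellation in step (3) and turn green --- but this is harmless, since your argument only needs that no \emph{new} frozen-to-mutable arrow into $\beta$ is created, which you justify correctly.
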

\begin{proof}
If $\mu_k$ is a green mutation then the only vertex that goes from green to red is $k$, and every arc that is mutated, is mutated exactly once in the sequence. Therefore $\musep$ is a green sequence. 
\end{proof}

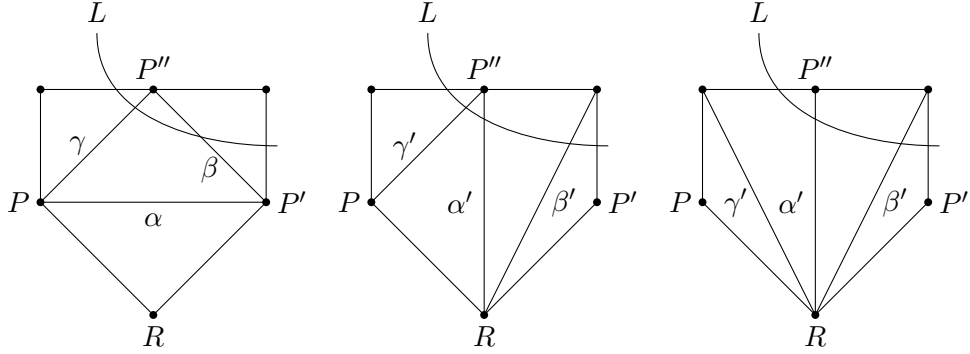
\begin{figure}[t]
\begin{tikzpicture}[x=1.5cm,y=1.5cm, font=\small]
\coordinate (a) at (-1,1);
\coordinate (b) at (1,1);
\coordinate (p2) at (0,1);
\coordinate (p1) at (1,0);
\coordinate (p) at (-1,0);
\coordinate (r) at (0,-1);

\draw (p) to (a) to (p2) to (b) to (p1) to node[below]{$\beta$} (p2) to node[left]{$\gamma$} (p) to (r) to (p1) to node[below]{$\alpha$} (p);
\draw (-.5,1.5) node[above] {$L$} to[out=-90,in=180] (1.1,.5);

\fill (p1) node[right]{$P'$}  circle (1.5pt);
\fill (p2) node[above]{$P''$} circle (1.5pt);
\fill (p) node[left]{$P$} circle (1.5pt);
\fill (r) node[below]{$R$} circle (1.5pt);
\fill (a) circle (1.5pt);
\fill (b) circle (1.5pt);
\end{tikzpicture}
\begin{tikzpicture}[x=1.5cm,y=1.5cm, font=\small]
\coordinate (a) at (-1,1);
\coordinate (b) at (1,1);
\coordinate (p2) at (0,1);
\coordinate (p1) at (1,0);
\coordinate (p) at (-1,0);
\coordinate (r) at (0,-1);

\fill (p1) node[right]{$P'$}  circle (1.5pt);
\fill (p2) node[above]{$P''$} circle (1.5pt);
\fill (p) node[left]{$P$} circle (1.5pt);
\fill (r) node[below]{$R$} circle (1.5pt);
\fill (a) circle (1.5pt);
\fill (b) circle (1.5pt);

\draw (p) to (a) to (p2) to (b) to (p1)to (r)  to (p);
\draw (p) to node[left]{$\gamma'$}(p2);
\draw (r) to  node[left]{$\alpha'$} (p2);
\draw (r) to node[right]{$\beta'$}(b);
\draw (-.5,1.5) node[above] {$L$} to[out=-90,in=180] (1.1,.5);
\end{tikzpicture}
\begin{tikzpicture}[x=1.5cm,y=1.5cm, font=\small]
\coordinate (a) at (-1,1);
\coordinate (b) at (1,1);
\coordinate (p2) at  (0,1);
\coordinate (p1) at (1,0);
\coordinate (p) at (-1,0);
\coordinate (r) at (0,-1);

\draw (p) to (a) to (p2) to (b) to (p1) to (r) to (p);
\draw (r) to node[left]{$\gamma'$}(a);
\draw (r) to  node[left]{$\alpha'$}(p2);
\draw (r) to node[right]{$\beta'$}(b);
\draw (-.5,1.5) node[above] {$L$} to[out=-90,in=180] (1.1,.5);

\fill (p1) node[right]{$P'$}  circle (1.5pt);
\fill (p2) node[above]{$P''$} circle (1.5pt);
\fill (p) node[left]{$P$} circle (1.5pt);
\fill (r) node[below]{$R$} circle (1.5pt);
\fill (a) circle (1.5pt);
\fill (b) circle (1.5pt);
\end{tikzpicture}
\caption{The different cases that arise in the proof of Lemma \ref{lem:cycleok}(1). The left figure is the triangulation $\mu(T)$, the middle and right triangulations are the two possiblilities for $\mu_\beta \dots \mu_\alpha\mu(T).$ The middle is the case when $\beta$ is mutated before $\gamma$ and the right corresponds to when $\gamma$ is mutated before $\beta$.}\label{fig:cycleok}
\end{figure}
\begin{lemma}\label{lem:cycleok}
Assume there are no radial punctures in $\mathcal{P}$. Suppose $\musep^\mathcal{P} = (\gamma_1,\dots,\gamma_n)$, and each $\gamma_i$ is green in a triangulation $T$. Note that $\gamma_i \neq \gamma_j$ for all $i \neq j$. Let $\alpha = \gamma_j$ for some $j = 1, \dots, n$. Let $\mu=(\gamma_1,\dots,\gamma_{j-1}).$ Let $\alpha'$ be the unique new arc obtained from mutating $\alpha$ in $\mu(T)$. By Lemma \ref{lem:ind} $\alpha'$ can have at most one endpoint in $\mathcal{P}$.\begin{enumerate}
\item If $\alpha'$ has exactly one endpoint in $\mathcal{P}$, then $\alpha'$ is green in $\musep^\mathcal{P}(T)$. 
\item If $\alpha'$ is not incident to any puncture in $\mathcal{P}$, then $\alpha'$ is red in $\musep^\mathcal{P}(T)$. 
\end{enumerate}
\end{lemma}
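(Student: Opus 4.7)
The plan is to apply the geometric characterization of Definition~\ref{def:green2}: an arc of $\musep^\mathcal{P}(T)$ is green (resp.\ red) iff no elementary lamination $L_\beta$, $\beta \in T$, has positive (resp.\ negative) shear coordinate against it. The strategy is thus to pin down the local picture of $\alpha'$ in $\musep^\mathcal{P}(T)$ and then read off signs of crossings of each relevant $L_\beta$ using the $S$/$Z$ rule of Definition~\ref{def:lamination}.

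First I would locate $\alpha$ in $\mu(T)$, where $\mu = (\gamma_1, \dots, \gamma_{j-1})$. Since $\alpha \in E_T^\mathcal{P}$, both endpoints of $\alpha$ lie in $\mathcal{P}$, and $\alpha$ is the shared diagonal of two adjacent triangles, forming a quadrilateral $Q$. Flipping $\alpha$ produces $\alpha'$, the opposite diagonal of $Q$, whose two endpoints are exactly the two ``non-$\alpha$'' vertices of $Q$; by Lemma~\ref{lem:ind} at most one of these vertices can lie in $\mathcal{P}$, which is precisely the dichotomy between Cases~(1) and~(2). The later mutations in $\musep^\mathcal{P}$ act only on arcs in $E_T^\mathcal{P}$, so they can further flip some of the four corner arcs of $Q$, but only if those corner arcs still have both endpoints in $\mathcal{P}$ when their turn arrives; Figure~\ref{fig:cycleok} enumerates the possible orderings of these corner flips and shows that the local configuration of $\alpha'$ in $\musep^\mathcal{P}(T)$ is the same regardless of branching.

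The second step is the sign computation, which by the locality of shear coordinates involves only the $L_\beta$ for $\beta \in T$ adjacent to $Q$ or to its neighboring triangles. Tracing each such $L_\beta$ through the final local picture from Figure~\ref{fig:cycleok}: in Case~(1) the unique endpoint of $\alpha'$ lying in $\mathcal{P}$ forces every crossing of $\alpha'$ to take the ``$Z$'' shape, so $b_{\alpha'}(\musep^\mathcal{P}(T), L_\beta) \leq 0$ for all $\beta$ and $\alpha'$ is green; symmetrically in Case~(2) both endpoints of $\alpha'$ lie outside $\mathcal{P}$ and every crossing is ``$S$''-shaped, yielding $b_{\alpha'}(\musep^\mathcal{P}(T), L_\beta) \geq 0$ and $\alpha'$ is red. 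I would also need the hypothesis that every $\gamma_i$ was green in $T$ to rule out preexisting crossings of the opposite sign that could survive the sequence.

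The main obstacle will be the bookkeeping at the punctures of $\mathcal{P}$: the spiraling direction of each $L_\beta$ interacts with the tagging of $\beta$ at any puncture of $\mathcal{P}$ to which it spirals, and flips of nearby corner arcs during $\musep^\mathcal{P}$ change which arcs carry which tags. The no-radial-puncture hypothesis on $\mathcal{P}$ is precisely what allows an unambiguous application of the tag-change rule~(1) of Definition~\ref{def:lamination}, reducing the sign analysis to the two local configurations displayed in Figure~\ref{fig:cycleok}; verifying this reduction carefully is the most delicate part of the argument.
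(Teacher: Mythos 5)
Your setup---locating $\alpha$ as the diagonal of a quadrilateral in $\mu(T)$, taking $\alpha'$ to be the opposite diagonal, and invoking Lemma~\ref{lem:ind} to get the dichotomy on the endpoints of $\alpha'$---matches the paper. The gap is in your sign computation: you claim that the membership of the endpoints of $\alpha'$ in $\mathcal{P}$ ``forces'' every crossing of a lamination with $\alpha'$ to be of a single shape ($Z$ in case (1), $S$ in case (2)). No such implication holds: the sign of a crossing is determined by how the curve passes through the quadrilateral surrounding $\alpha'$, not by which punctures the endpoints of $\alpha'$ lie at. In fact, immediately after the flip of $\alpha$ the arc $\alpha'$ is \emph{red} in both cases, since any lamination $L$ with $b_\alpha(\mu(T),L)=-1$ satisfies $b_{\alpha'}(\mu_\alpha\mu(T),L)=+1$ (flipping the diagonal turns a $Z$-crossing into an $S$-crossing); this happens even though in case (1) the arc $\alpha'$ already has exactly one endpoint in $\mathcal{P}$. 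So endpoint membership alone cannot decide the color, and the conclusion of case (1) cannot be read off from the final picture by your stated rule.

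The missing idea is the mechanism that actually distinguishes the two cases, namely which sides of the quadrilateral get flipped \emph{later} in $\musep^\mathcal{P}$. In case (1) the paper singles out the side $\beta$ of the quadrilateral, counter-clockwise from $\alpha$, whose two endpoints both lie in $\mathcal{P}$: it belongs to $E_T^\mathcal{P}$, hence equals $\gamma_k$ for some $k>j$, and the flip at $\beta$ is what turns $\alpha'$ from red back to green. Verifying this uses the hypothesis that the third arc $\gamma$ of the triangle containing $\alpha$ and $\beta$ is green (to determine which way the offending laminations curve) and requires checking separately the two orders in which $\beta$ and $\gamma$ can appear in $\musep^\mathcal{P}$, which is exactly what the two right-hand pictures in Figure~\ref{fig:cycleok} record. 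In case (2) every side of the quadrilateral has exactly one endpoint in $\mathcal{P}$, so none lies in $E_T^\mathcal{P}$ and none is ever flipped again; the positive shear coordinate created by the flip of $\alpha$ therefore persists to the end, and $\alpha'$ is red by Theorem~\ref{thm:signcoh}. Your proposal mentions the corner flips but never connects them to the color of $\alpha'$, so as written the argument does not go through.
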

\begin{proof}
Since the arc $\alpha$ was mutated in $\musep^{\mathcal{P}}$ it is in $E^\mathcal{P}_{T}$ and hence has both endpoints in $\mathcal{P}$. Let $P, P' \in \mathcal{P}$ be the endpoints of $\alpha.$ Note that it is possible for $P=P'$. 

Suppose that $\alpha'$ has one endpoint $P'' \in \mathcal{P}$ and its other endpoint $R \not \in\mathcal{P}$. Then there exists a quadrilateral $PP''P'R$ in $\mu(T)$ with diagonals $\alpha$ and $\alpha'.$ Let $\beta$ be the arc counter-clockwise from $\alpha$ in the quadrilateral with both endpoints in $\mathcal{P}$. Then $\beta \in E_T^\mathcal{P}$ and $\beta = \gamma_k$ for some $k > j$. See the left triangulation in Figure \ref{fig:cycleok}. We will show that when $\beta$ is mutated during $\musep^{\mathcal{P}}$ the arc $\alpha'$ will turn green. Suppose that $L$ is some lamination with $b_\beta(T,L)=-1$ and let $\gamma$ is the third arc of the triangle with edges $\alpha$ and $\beta.$ Then by our assumption that $\gamma$ is green $L$ must curve upwards in Figure \ref{fig:cycleok}. It is then easy to see that $b_{\alpha'}(\mu_\beta \dots \mu_\alpha\mu(T),L)=-1$ so $\alpha'$ is green in $ \mu_\beta \dots \mu_\alpha\mu(T)$. Note that we must check both the case when $\gamma$ is mutated before $\beta$ in $\musep^\mathcal{P}$ and the case when $\beta$ is mutated first. We provide a picture of the triangulation $\mu_\beta \dots \mu_\alpha\mu(T)$ in both cases in Figure \ref{fig:cycleok}.  Since $\alpha$ is not mutated again in $\musep^\mathcal{P}$ it will still be green in $\musep^\mathcal{P}(T)$.


Suppose that $\alpha'$ is not incident to any puncture in $\mathcal{P}$. Let $\mu(T)$. Let $R,R' \not\in \mathcal{P}$ be the endpoints of $\alpha'$. Then there exists a quadrilateral $PRP'R'$ in $\mu(T)$ with diagonals $\alpha$ and $\alpha'.$ But notice that each arc composing the boundary of this quadrilateral is not in $E_{T'}^\mathcal{P}$ so none of them will be mutated in the remaining mutations of $\musep.$ Therefore if $L$ is some lamination such that $b_\alpha(T,L)=-1$ then we have $b_{\alpha'}(\musep^{\mathcal{P}}(T),L)=1$ so $\alpha'$ is red in $\mu_\alpha\mu(T)$. Now as none of the arcs in the quadrilateral containing $\alpha'$ are mutated after $\alpha$ in $\musep^\mathcal{P}$ the arc $\alpha$ will remain red in $\musep^\mathcal{P}(T)$.  
\end{proof}

\section{Construction of maximal green sequences for closed surfaces}\label{sec:proof}

Let $\Sigma$ be a closed marked surface of genus zero with at least four punctures, or a closed marked surface of genus at least one with at at least two punctures. It was shown by Ladkani in \cite{ladkani} that any once-punctured closed surface has no maximal green sequence. 

Let $T$ be a triangulation of $\Sigma$. We assume for simplicity that all arcs are tagged plain at all punctures except for radial punctures. We may make this assumption because the corresponding quivers of two triangulations that differ only by the tagging at a puncture are isomorphic. For all $\alpha\in T$ let $\alpha^\circ$ denote the elementary lamination associated to $\alpha$. Fix the multi-lamination $\mathcal{L}=\{\alpha^\circ\}_{\alpha \in T}$. 

\begin{figure}[t]
$$
\begin{xy} 0;<.75pt,0pt>:<0pt,-.65pt>:: 
(0,0) *+{1} ="1",
(25,200) *+{2} ="2",
(400,300) *+{3} ="3",
(400,225) *+{4} ="4",
(350,250) *+{5} ="5",
(300,300) *+{6} ="6",
(300,200) *+{7} ="7",
(400,150) *+{8} ="8",
(400,400) *+{9} ="9",
(275,400) *+{10} ="10",
(275,350) *+{11} ="11",
(175,400) *+{12} ="12",
(0,400) *+{13} ="13",
(125,350) *+{14} ="14",
(50,325) *+{15} ="15",
(225,325) *+{16} ="16",
(250,275) *+{17} ="17",
(125,300) *+{18} ="18",
(215,235) *+{19} ="19",
(150,250) *+{20} ="20",
(250,200) *+{21} ="21",
(300,100) *+{22} ="22",
(400,0) *+{23} ="23",
(225,50) *+{24} ="24",
(225,25) *+{25} ="25",
(200,200) *+{26} ="26",
(135,205) *+{27} ="27",
(75,250) *+{28} ="28",
(95,185) *+{29} ="29",
(225,150) *+{30} ="30",
(50,150) *+{31} ="31",
(75,33) *+{32} ="32",
(50,100) *+{33} ="33",
(150,125) *+{34} ="34",
(100,125) *+{35} ="35",
(91,86) *+{36} ="36",
"1", {\ar"2"},
"13", {\ar"1"},
"1", {\ar"23"},
"24", {\ar"1"},
"25", {\ar"1"},
"2", {\ar"13"},
"2", {\ar"15"},
"28", {\ar"2"},
"3", {\ar"4"},
"5", {\ar"3"},
"6", {\ar"3"},
"3", {\ar"9"},
"4", {\ar"5"},
"4", {\ar"7"},
"8", {\ar"4"},
"5", {\ar"6"},
"7", {\ar"5"},
"6", {\ar"7"},
"9", {\ar"6"},
"7", {\ar"8"},
"22", {\ar"8"},
"8", {\ar"23"},
"9", {\ar"10"},
"11", {\ar"9"},
"10", {\ar"12"},
"12", {\ar"11"},
"13", {\ar"12"},
"12", {\ar"14"},
"14", {\ar"13"},
"15", {\ar"14"},
"14", {\ar"16"},
"16", {\ar"15"},
"15", {\ar"28"},
"16", {\ar"17"},
"18", {\ar"16"},
"17", {\ar"18"},
"19", {\ar"17"},
"17", {\ar"21"},
"18", {\ar"19"},
"20", {\ar"18"},
"19", {\ar"20"},
"21", {\ar"19"},
"20", {\ar"26"},
"27", {\ar"20"},
"21", {\ar"22"},
"26", {\ar"21"},
"23", {\ar"22"},
"22", {\ar"26"},
"23", {\ar"24"},
"23", {\ar"25"},
"26", {\ar"27"},
"27", {\ar"28"},
"29", {\ar"27"},
"28", {\ar"29"},
"30", {\ar"29"},
"29", {\ar"31"},
"31", {\ar"30"},
"30", {\ar"32"},
"34", {\ar"30"},
"33", {\ar"31"},
"31", {\ar"35"},
"32", {\ar"33"},
"32", {\ar"34"},
"36", {\ar"32"},
"35", {\ar"33"},
"33", {\ar"36"},
"35", {\ar"34"},
"34", {\ar"36"},
"36", {\ar"35"},
\end{xy}$$
\caption{The quiver corresponding to the triangulation $T^*$ given in Figure \ref{fig:main_example_triangulation}.}\label{fig:main_example_quiver}

\end{figure}

\begin{figure}[t]
\centering
\begin{tikzpicture}[x=6cm,y=5cm,font=\tiny]
\coordinate (a) at (1.00000000000000,0.000000000000000);
\coordinate (b) at (0.707106781186548,0.707106781186548);
\coordinate (c) at (0.00000000000000,1.00000000000000);
\coordinate (d) at (-1,1);
\coordinate (e) at (-1.6,0);
\coordinate (f) at (-1,-1);
\coordinate (g) at (0,-1.00000000000000);
\coordinate (h) at (0.707106781186548,-0.707106781186548);
\coordinate (i) at (0.0,0);
\coordinate (q5) at (-.1,0.5);
\coordinate (q2) at (-1,0);
\coordinate (q3) at (-.4,-.15);
\coordinate (q4) at (-.5,-.3);
\coordinate (r1) at (-1.1,-.2);
\coordinate (r2) at (-.9,-.2);
\coordinate (r3) at (-1,-.4);
\coordinate (r4) at (-.5,-.9);

\coordinate (mb1) at (.2357,.098-1);
\coordinate (mb2) at (2*.2357,2*.098-1);
\coordinate (mt1) at (.2357,-.098+1);
\coordinate (mt2) at (2*.2357,-2*.098+1);
\coordinate (mt3) at (.855,0.707106781186548/2);
\coordinate (mb3) at (.855,-0.707106781186548/2);

\fill (a) circle (1.5pt);
\fill (b) circle (1.5pt);
\fill (c) circle (1.5pt);
\fill (d) circle (1.5pt);
\fill (e) circle (1.5pt);
\fill (f) circle (1.5pt);
\fill (g) circle (1.5pt);
\fill (h) circle (1.5pt);
\fill (i) circle (2.5pt);
\fill (q5) circle (2.5pt);
\fill (q4) circle (2.5pt);
\fill (q3) circle (2.5pt);
\fill (q2) circle (2.5pt);
\fill (r1) circle (2.5pt);
\fill (r2) circle (2.5pt);
\fill (r3) circle (2.5pt);
\fill (r4) circle (2.5pt);

\draw (c) to node [above] {2} (d) to node [left] {1} (e) to node [left] {2} (f) to node [below] {1} (g);

\draw[dashed] (a) to (b) to (c);
\draw[dashed] (g) to (h) to (a);

\draw (mb1) to node [midway,fill=white]{4} (i) to node [midway,fill=white]{3} (mt1);
\draw (mb2) to node [midway,fill=white]{5} (i) to node [midway,fill=white]{5} (mt2);
\draw (mb3) to node [midway,fill=white]{3} (i) to node [midway,fill=white]{4} (mt3);

\coordinate (mt4) at (.9,.23) ;
\coordinate (mb4) at (.9,-.23);
\draw[bend right] (mt4) to node [left]{5} (mb4);

\draw (i) to[bend right] node [midway,fill=white]{14} (e);
\draw (g) to node [midway,fill=white]{8} (i);
\draw (i) to node [midway,fill=white]{9} (c);
\draw (i) to[out=135, in = 225, min distance=25mm] node [left,fill=white]{12} (c);
\draw (i) to node [midway,fill=white]{10} (q5);
\draw (q5) to node [midway,fill=white]{11} (c);
\draw (e) to node [midway,fill=white]{13} (c);
\draw (f) to[out=125,in=180] node [midway,fill=white]{28} (q2);
\draw (f) to[out=45, in = 0] node [midway,fill=white]{27} (q2);
\draw (i) to node [midway,fill=white]{6} (h);
\draw (i) to node [midway,fill=white]{7} (b);
\draw (q2) to node [midway,fill=white]{15} (e);
\draw (q2) to node [midway,fill=white]{16} (i);
\draw (q3) to node [midway,fill=white]{17} (i);
\draw (q3) to node [midway,fill=white]{19} (q4);
\draw (q3) to[out=155,in=0] node [near start,fill=white]{18} (q2);
\draw (q4) to[out=135,in=0] node [near start,fill=white]{20} (q2);
\draw (q4) to[out=235,in=45] node [midway,fill=white]{26} (f);
\draw (q4) to node [midway,fill=white]{21} (i);
\draw(i) to[out=250,in=45] node [midway,fill=white]{22}(f);
\draw (g) to[out=90,in=45] node [midway,fill=white]{23} (f);
\draw (q2) to node [midway,fill=white]{35} (r1);
\draw (q2) tonode [midway,fill=white]{34} (r2);
\draw (q2) to[out=-15,in = 0,min distance=12mm] node [near end,fill=white]{30} (r3);
\draw (q2) to[out = 195, in = 180,min distance=12mm] node [near end,fill=white]{31} (r3);
\draw (r2) to node [midway,fill=white]{36} (r1);
\draw (r1) to node [midway,fill=white]{33} (r3);
\draw (r2) to node [midway,fill=white,inner sep=0pt,outer sep=1pt]{32} (r3);

\draw (q2) to[out=0,in = 0,min distance=14mm] (-1,-.5);
\draw (q2) to[out = 180, in = 180,min distance=14mm] (-1,-.5);
\draw (-1,-.5) node [fill=white]{29};
\draw (g) to node [midway,fill=white]{25} (r4);
\draw (g) to[bend right] node [near end,sloped,rotate=90]{$\bowtie$} node [midway,fill=white]{24} (r4);

\draw (a) node[right]{{\small $X$}};
\draw (d) node[left]{{\small $X$}};
\draw (b) node[right]{{\small $X$}};
\draw (c) node[right]{{\small $X$}};
\draw (e) node[left]{{\small $X$}};
\draw (f) node[left]{{\small $X$}};
\draw (g) node[right]{{\small $X$}};
\draw (h) node[right]{{\small $X$}};

\draw (.2,0) node {$P_1$};
\draw (q2) node[above] {$P_2$};
\draw (q3) node[above] {$P_3$};
\draw (q4) node[below right] {$P_4$};
\draw (q5) node[above left] {$P_5$};
\draw (r4) node[left]{$S_1$};
\end{tikzpicture}
\caption{A triangulation $T^*$ of the closed genus two surface with 10 punctures. Its corresponding quiver is shown in Figure \ref{fig:main_example_quiver}. We label the arcs of $T$ by their corresponding vertex in the quiver. The punctures in the interior of the monogn labelled 29 are $R_1,R_2,$ and $R_3$ with $R_1$ in the 6 o'clock position and the other following counter-clockwise. }\label{fig:main_example_triangulation}
\end{figure}
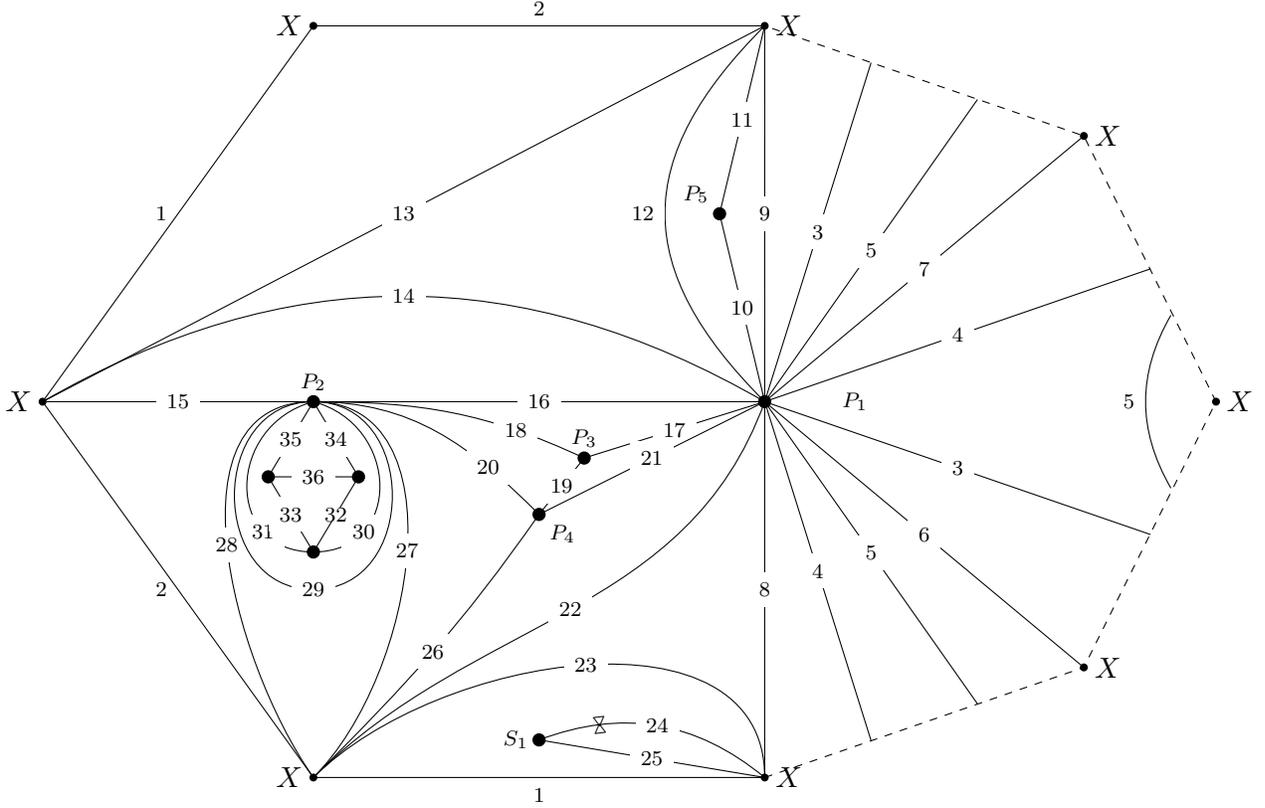

Choose any puncture that is not in the interior of a monogon or a radial puncture and label it $X$. Let $\mathcal{S}$ denote the set of all radial punctures of $\Sigma$ in $T$. Let $\mathcal{M}$ be the set of all punctures of $\Sigma$ that are not in $\mathcal{S}$ and are not $X$. We now define a partition on the set $\mathcal{M}$. 

Let $\mathcal{M}_0$ be the set of all punctures of $\mathcal{M}$ that are not in the interior of any monogon. Define $\mathcal{M}_{i+1}$ to be the punctures of $\mathcal{M}$ that \begin{enumerate}
\item are in the interior of a monogon based at a puncture in $\mathcal{M}_i$;
\item but not in the interior of any monogon based at a puncture in $\mathcal{M} \setminus \bigcup_{j=0}^{i}\mathcal{M}_j.$
\end{enumerate} 

\begin{example}
Consider the triangulation $T^*$ given in Figure \ref{fig:main_example_triangulation}. We choose $X$ to be the puncture in the exterior of the diagram. Then $$\mathcal{S}=\{S_1\}, \mathcal{M}_0=\{P_1, P_2,P_3,P_4,P_5\}, \text{ and } \mathcal{M}_1=\{R_1,R_2,R_3\}.$$
\end{example}

Our maximal green sequence will initially focus on changing the taggings at the punctures of $\mathcal{M}$ and then change the tagging at $X$. The taggings of punctures of $\mathcal{S}$ will be changed during this process. 

\subsection{Independence of punctures.}
Note that it is possible for the set $\mathcal{M}_0$ to be empty. If this is the case then we may skip ahead to Subsection \ref{sec:muI}.

In this step we take $\mathcal{P}= \mathcal{M}_0$ in Construction \ref{lem:sep} and apply $\musep^{\mathcal{M}_0}$ to $T$. No arcs have been mutated yet so $\musep^{\mathcal{M}_0}$ is a green sequence for $T$ by Lemma \ref{lem:greenind}.

\begin{example}\label{ex:main_sep}
In the triangulation $T^*$ in Figure \ref{fig:main_example_triangulation} we have $\mathcal{M}_0= \{P_1, P_2,P_3,P_4,P_5\}$ and it follows that $$E_T^{\mathcal{M}_0} = \{ 3,4,5,10,16,20,21,29,17,18,19\}.$$ Note that $\sigma_T^{\mathcal{M}_0}(\alpha)=0$ for $ \alpha \in \{3,4,5,10,16,20,21,29\},$ and $\sigma_T^{\mathcal{M}_0}(\alpha)=1$ for $\alpha \in \{17,18,19\},$
so
$$\musep^{\mathcal{M}_0}=(3, 4, 5, 10, 16, 17, 18, 19, 20, 21, 29).$$
The order that we mutate the arcs $3,4,5,10,16,20,21,$ and $29$ does not matter, but we will order them by their labels. We adopt this convention in the sequel when we have freedom to do so. See Figure \ref{fig:main_example_triangulation_3} for a picture of the triangulation $\musep^{\mathcal{M}_0}(T^*).$
\end{example}

\subsection{Mutating cycles around punctures.} Label the punctures $\mathcal{M}_0=\{P_i\}_{i=1}^n$ so that in $\musep ^{\mathcal{M}_0}(T)$ we have $P_i$ is not a radial puncture for $i=1, \dots, t$, and $P_i$ is a radial puncture for $i=t+1, \dots, n$.

Now in $\musep^{\mathcal{M}_0}(T)$ we apply $\mucycle^{P_i}$ from Lemma \ref{lem:cyclesurface} to the arcs incident to $P_i$ for $i=1,\dots, t$. By Lemma \ref{lem:ind} we have that there are no loops based at each $P_i$ and all of these mutation sequences $\mucycle^{P_i}$ will be disjoint. 
We define the sequence $$\mucycle^{\mathcal{M}_0}:=\mucycle^{P_t} \dots \mucycle^{P_1}.$$
The mutation sequence $\mucycle^{\mathcal{M}_0}$ is a green sequence for $\musep^{\mathcal{M}_0}(T)$ by Lemma \ref{lem:cycleok}.

\begin{example}\label{ex:main_3}
In $\musep^{\mathcal{M}_0}(T^*)$ we have that $P_4$ and $P_5$ are radial punctures so we will not mutate any arcs adjacent to them in $\mucycle^{\mathcal{M}_0}.$ For the other three punctures of ${\mathcal{M}_0}$ we have $$\mucycle^{P_1}= (3, 6, 7, 9, 12, 14, 22, 8, 14, 12, 9, 7, 6, 3),$$ which interchanges arcs 22 and 8;
$$\mucycle^{P_2}= (15, 28, 31, 35, 34, 30, 27, 34, 35, 31, 28, 15),$$ which interchanges arcs 30 and 27; and finally 
$$\mucycle^{P_3}= (16, 20, 21, 16),$$ which interchanges arcs 20 and 21. See Figure \ref{fig:main_example_triangulation_3}.
\end{example}


\subsection{Mutating back to our initial triangulation.}\label{subsec:indstar}
We now apply a slightly modified version of $(\musep^{\mathcal{M}_0})^{-1}$ to mutate back to our original triangulation. This modification accounts for the punctures $P_i \in {\mathcal{M}_0}$ for $i = t+1, \dots, n$, which we did not apply $\mucycle$ in the previous step and arcs that were interchanged during $\mucycle^{\mathcal{M}_0}.$

For each $P_i \in \mathcal{M}_0$ with $i = t+1, \dots, n$, that is each puncture that is a radial puncture in $\mucycle^{\mathcal{M}_0}\musep^{\mathcal{M}_0}(T)$, let $\alpha_{P_i}$ be the arc mutated during $\musep^{\mathcal{M}_0}$ and $\beta_{P_i}$ the other arc incident to ${P_i}$. We define a sequence $\musep^{\mathcal{M}_0,*}$ that is the same as $\musep^{-1}$ except;
\begin{enumerate}
\item We replace $\alpha_{P_i}$ with $\beta_{P_i}$ in $\musep^{\mathcal{M}_0,*}$ for $i = t+1, \dots, n$;
\item If $\alpha$ appears in $\musep^{\mathcal{M}_0}$ and was interchanged with $\beta$ during $\mucycle^{\mathcal{M}_0}$ then we replace $\alpha$ with $\beta$ in $\musep^{\mathcal{M}_0,*}$. 
\end{enumerate}
Note that for both $\alpha_{P_i}$ and $\beta_{P_i}$ their other endpoint opposite of $P_i$ is $X$, so they were not mutated during $\mucycle^{\mathcal{M}_0}$. 

\begin{example}\label{ex:main_4}
In our running example we have $\musep^{\mathcal{M}_0}=(3, 4, 5, 10, 16, 20, 21, 29, 17, 18, 19).$ To construct $\musep^{\mathcal{M}_0,*}$ we look at $( \musep^{\mathcal{M}_0})^{-1}$ then (1) tells us to replace 10 with 11 and 19 with 26; (2) tells us to replace 20 with 21 and vice versa. That is $$\musep^{\mathcal{M}_0,*}=(26, 18, 17, 29, 20, 21, 16, 11, 5, 4, 3).$$ See Figure \ref{fig:main_example_triangulation_4}.
\end{example}

\begin{lemma}\label{lem:reverse}
$\musep^{\mathcal{M}_0,*}$ is a green sequence for $T'=\mucycle^{\mathcal{M}_0}\musep^{\mathcal{M}_0}(T)$. 
\end{lemma}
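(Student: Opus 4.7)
The plan is to show by induction on the step position that each mutation in $\musep^{\mathcal{M}_0,*}$ occurs at a green vertex.

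First I would establish the colors of the relevant arcs in $T'=\mucycle^{\mathcal{M}_0}\musep^{\mathcal{M}_0}(T)$. By Lemma~\ref{lem:cycleok} applied to $\musep^{\mathcal{M}_0}$ on $T$, an arc of $E_T^{\mathcal{M}_0}$ that still has an endpoint in $\mathcal{M}_0$ after mutation is green in $\musep^{\mathcal{M}_0}(T)$, and an arc with neither endpoint in $\mathcal{M}_0$ is red there. In particular for each radial puncture $P_i$ ($i>t$) the mutated arc $\alpha_{P_i}$ is green and $\beta_{P_i}$ is green because it has never been mutated. The subsequent $\mucycle^{\mathcal{M}_0}$ is a composition of maximal green sequences for the disjoint cycles around the non-radial $P_j\in\mathcal{M}_0$, so each arc in one of these cycles becomes red in $T'$; the arcs $\alpha_{P_i},\beta_{P_i}$ at radial $P_i$ have their second endpoint at $X\notin\mathcal{M}_0$, so they lie outside every cycle and their colors are not disturbed.

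Next I would describe geometrically what $\musep^{\mathcal{M}_0,*}$ does. The sequence processes the mutations in the reverse order of $\musep^{\mathcal{M}_0}$, i.e.\ by decreasing value of $\sigma_T^{\mathcal{M}_0}$. Rule~(1) reflects that at a radial puncture $P_i$ the arcs $\alpha_{P_i}$ and $\beta_{P_i}$ share the same underlying untagged arc, and flipping $\beta_{P_i}$ yields the physical arc that flipping $\alpha_{P_i}$ would have produced before $P_i$ became radial. Rule~(2) reflects the relabelling induced by the interchanges in the Cycle Lemma: the physical arc we wish to mutate is now labelled with the swapped index. Together these substitutions ensure that $\musep^{\mathcal{M}_0,*}$ physically flips the arcs of $(\musep^{\mathcal{M}_0})^{-1}$ on the triangulation $T'$, ending at a triangulation that matches $T$ except for flipped tagging at each puncture of $\mathcal{M}_0$.

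I would then induct on the step $k$ in $\musep^{\mathcal{M}_0,*}$. At step $k$, suppose the mutation is at a vertex $v$ corresponding to an arc $\gamma$. For the earliest steps $\gamma$ is one of the innermost arcs (largest $\sigma$-value), and I would produce a lamination witnessing that $\gamma$ is green by combining the elementary laminations of the neighbouring arcs on its boundary; since those neighbours have either not been touched by $\musep^{\mathcal{M}_0,*}$ yet or were just brought to a green configuration by the cycle mutations, their shear contributions add up in the required way. For subsequent steps, the arcs flipped in the preceding steps of $\musep^{\mathcal{M}_0,*}$ supply the needed laminations: once an interior layer has been undone, the next layer outward becomes green by a mirror of the argument in Lemma~\ref{lem:cycleok}, now with the roles of ``endpoint still in $\mathcal{M}_0$'' and ``endpoint not in $\mathcal{M}_0$'' swapped.

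The hardest part will be keeping the bookkeeping straight as we cross between the radial portions of the triangulation and the cyclically mutated portions, and verifying that rules (1) and (2) in the construction of $\musep^{\mathcal{M}_0,*}$ act on disjoint sets of arcs. The latter follows from the observation that $\alpha_{P_i}$ and $\beta_{P_i}$ never appear in any $\mucycle^{P_j}$ cycle because their non-$P_i$ endpoint is $X$, while rule~(2) only replaces arcs that were interchanged by some $\mucycle^{P_j}$. Once this disjointness is established the induction closes and $\musep^{\mathcal{M}_0,*}$ is shown to be a green sequence on $T'$.
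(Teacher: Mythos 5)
There is a genuine gap at the heart of your argument. Recall that the multi-lamination is fixed once and for all as $\mathcal{L}=\{\alpha^\circ \mid \alpha\in T\}$, the elementary laminations of the arcs of the \emph{original} triangulation $T$; by Definition \ref{def:green2} and sign-coherence (Theorem \ref{thm:signcoh}), to certify that an arc is green at some stage of the sequence you must exhibit a \emph{single} lamination $L\in\mathcal{L}$ with negative shear coordinate at that arc. Your key step --- ``produce a lamination witnessing that $\gamma$ is green by combining the elementary laminations of the neighbouring arcs on its boundary \dots their shear contributions add up in the required way'' --- is not a legitimate move in this framework: shear contributions coming from \emph{different} laminations of $\mathcal{L}$ cannot be added, and a ``combination'' of elementary laminations is not an element of the fixed multi-lamination $\mathcal{L}$. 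Likewise, ``the arcs flipped in the preceding steps of $\musep^{\mathcal{M}_0,*}$ supply the needed laminations'' cannot be right as stated, since the admissible witnesses are elementary laminations of arcs of $T$, not of arcs created during the mutation sequence. So the induction you describe never actually closes.

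The paper's proof is short precisely because it names the witness: writing $\musep^{\mathcal{M}_0,*}=(\alpha_1,\dots,\alpha_n)$ and letting $\alpha_{j'}^{T}$ denote the arc of $T$ sitting at the same quiver vertex as $\alpha_j$ (with $j'$ adjusted when $\alpha_j$ was interchanged with another arc during $\mucycle^{\mathcal{M}_0}$), one checks that $b_{\alpha_j}\bigl(\mu_{j-1}(T'),(\alpha_{j'}^{T})^{\circ}\bigr)=-1$, so each $\alpha_j$ is green when its turn comes. Your surrounding observations --- the colours of arcs after $\musep^{\mathcal{M}_0}$ and $\mucycle^{\mathcal{M}_0}$, the fact that $\alpha_{P_i}$ and $\beta_{P_i}$ have their other endpoint at $X$ and hence avoid the cycles, and the disjointness of rules (1) and (2) --- are consistent with the paper and are useful bookkeeping, but without identifying the correct witness lamination for each step the proof is incomplete.
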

\begin{proof}
Suppose $\musep^{\mathcal{M}_0,*}=(\alpha_1, \dots, \alpha_n)$. Define $\mu_j=\mu_{\alpha_j}\dots \mu_{\alpha_1}$. Let $\alpha_i^T$ denote the arc in our initial triangulation $T$ that corresponds to the same vertex of the associated quiver as $\alpha_i$. Consider the elementary lamination $(\alpha_{j'}^{T})^\circ$, where $j'=j$ if $\alpha_j$ was not interchanged with another arc during $\mucycle^{\mathcal{M}_0}, $ and $j'=i$ if $\alpha_j$ was interchanged with $\alpha_i$ during $\mucycle^{\mathcal{M}_0}.$ Then we have $$b_{\alpha_j}(\mu_{j-1}(T'),(\alpha_{j'}^{T})^{\circ})
=-1.$$ Therefore at every step of $\musep^{\mathcal{M}_0,*}$ we mutate at a green vertex. 
\end{proof}

It is worthwhile to note that $\musep^{\mathcal{M}_0,*}\musep^{\mathcal{M}_0}\musep^{\mathcal{M}_0}(T)$ coincides with our original triangulation except that the taggings at all of the punctures of $\mathcal{M}_0$ are different. This observation gives rise to the fact that any arcs whose endpoints are exclusively punctures of ${\mathcal{M}_0}$ in $\musep^{\mathcal{M}_0,*}\mucycle^{\mathcal{M}_0}\musep^{\mathcal{M}_0}(T)$ will no longer have to be mutated in our mutation sequence. This also applies for any arcs whose endpoints are a radial puncture in $\mathcal{S}$ and a puncture in $\mathcal{M}_0$. 

\begin{lemma}\label{lem:done1}
Any arc with endpoint exclusively in $\mathcal{M}_0$ in $T'=\musep^{\mathcal{M}_0,*}\mucycle^{\mathcal{M}_0}\musep^{\mathcal{M}_0}(T)$ is done being mutated. 
\end{lemma}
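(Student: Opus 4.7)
The plan is to invoke the lamination reformulation of Lemma~\ref{lem:vertexdone}: for each arc $\alpha \in T'$ with both endpoints in $\mathcal{M}_0$, I will exhibit a single lamination $L \in \mathcal{L}$ whose shear-coordinate vector in $T'$ is $+1$ at $\alpha$ and $0$ at every other arc. Granting this, $\alpha$ is red in $T'$ and remains red under any further green mutation, so it is done being mutated.

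For the witness I would take $L = L_{\bar\alpha}^\circ \in \mathcal{L}$, where $\bar\alpha \in T$ is the arc sharing the underlying untagged curve of $\alpha$. By the observation immediately preceding the lemma, $T'$ coincides with $T$ except that the tagging at each puncture of $\mathcal{M}_0$ has been reversed, so $\alpha$ is just $\bar\alpha$ notched at its two endpoints (both of which lie in $\mathcal{M}_0$). To compute $b_\gamma(T', L_{\bar\alpha}^\circ)$, I apply rule~(1) of Definition~\ref{def:lamination} at each puncture $P \in \mathcal{M}_0$: in $T'$ every arc incident to such a $P$ is notched at $P$, so the rule applies and simultaneously flipping the tagging at $P$ while reversing the spiraling at $P$ of any lamination curve spiraling there preserves all shear coordinates. (At punctures of $\mathcal{M}_0$ where $L_{\bar\alpha}^\circ$ does not spiral, the spiral-reversal is vacuous and the rule simply says that flipping tagging at $P$ alone preserves shear coordinates.) After doing this at every puncture of $\mathcal{M}_0$, the triangulation has become $T$ and $L_{\bar\alpha}^\circ$ has become the lamination $\tilde L$ obtained by reversing its spiraling at the two endpoints of $\bar\alpha$. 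Thus $b_\gamma(T', L_{\bar\alpha}^\circ) = b_{\bar\gamma}(T, \tilde L)$ for each $\gamma \in T'$ and its counterpart $\bar\gamma \in T$.

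It remains to verify that $b_{\bar\gamma}(T, \tilde L) = +1$ if $\bar\gamma = \bar\alpha$ and $0$ otherwise. This is precisely the content of the convention difference flagged in the remark following Definition~\ref{def:elelam}: $\tilde L$, being $L_{\bar\alpha}^\circ$ with reversed spiraling at each puncture endpoint, is the Fomin--Thurston elementary lamination of $\bar\alpha$, whose defining property is that it contributes $+1$ to $b_{\bar\alpha}$ and $0$ to every other arc of $T$. Combined with the previous step, this gives $b_\gamma(T', L_{\bar\alpha}^\circ) = \delta_{\gamma, \alpha}$, and the lamination reformulation of Lemma~\ref{lem:vertexdone} concludes the argument.

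The main obstacle is this final identification. A reader unwilling to quote the convention difference with \cite{ft} as a black box needs a direct local argument: at each puncture endpoint of $\bar\alpha$, reversing the spiraling direction interchanges each $S$-shaped crossing with a $Z$-shaped one in the local picture near that puncture, flipping every local contribution to the shear coordinates at arcs incident to the puncture. Doing this at both endpoints of $\bar\alpha$ negates the shear-coordinate vector of $L_{\bar\alpha}^\circ$, taking $-1$ at $\bar\alpha$ to $+1$ and keeping the zero coordinates zero, which is exactly what is needed.
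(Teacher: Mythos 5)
Your proof is correct and follows the paper's argument exactly: the paper likewise takes the witness lamination $(\alpha^{T})^{\circ}$ for the arc $\alpha^T$ of $T$ underlying $\alpha$, asserts that its shear-coordinate vector in $T'$ equals $+1$ at $\alpha$ and $0$ elsewhere, and concludes by the lamination form of Lemma \ref{lem:vertexdone}. The only difference is that the paper states the shear-coordinate identity without justification, whereas you derive it from the tag-changing rule of Definition \ref{def:lamination} together with the convention comparison with \cite{ft}; this is an elaboration of the same route rather than a different approach.
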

\begin{proof}
Let $\alpha$ be an arc whose endpoints are both in $\mathcal{M}_0$ and let $\alpha^T$ denote the arc in $T$ that corresponds to the same vertex of the associated quiver as $\alpha$. Then $$b_{\gamma}(T',(\alpha^{T})^{\circ})=\begin{cases} 1 & \gamma=\alpha, \\ 0 & \gamma \neq \alpha.\end{cases}$$ Therefore $\alpha$ is done being mutated by Lemma \ref{lem:vertexdone}. 
\end{proof}

\begin{lemma}\label{lem:done2}
If $\alpha$ is an arc with one endpoint in $\mathcal{M}_0$ and the other at a puncture of $\mathcal{S}$ then $\alpha$ is done being mutated. 
\end{lemma}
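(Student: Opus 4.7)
The plan is to mirror the proof of Lemma~\ref{lem:done1} and invoke Lemma~\ref{lem:vertexdone}: it suffices to exhibit a lamination $L \in \mathcal{L}$ with $b_\gamma(T',L) = 1$ if $\gamma = \alpha$ and $0$ otherwise. Since the endpoint $S$ of $\alpha$ is a radial puncture of $T$ whose base $P$ lies in $\mathcal{M}_0$, the arc $\alpha$ is one of the two radial arcs at $S$; these share the same underlying untagged path and differ only by their tagging at $S$. Let $\alpha^T \in T$ denote the arc corresponding to the same quiver vertex as $\alpha$, and let $\alpha'^T \in T$ denote the \emph{other} radial arc at $S$. I propose $L = (\alpha'^T)^\circ \in \mathcal{L}$.

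The motivation is the pattern already exploited in Lemma~\ref{lem:done1}: for any tagged arc $\beta$ in a triangulation, the lamination running along $\beta$'s underlying path but spiraling oppositely to $\beta$'s tagging at both endpoints has shear coordinates $+1$ at $\beta$ and $0$ elsewhere. Since $\musep^{\mathcal{M}_0,*}\mucycle^{\mathcal{M}_0}\musep^{\mathcal{M}_0}$ swaps tags at exactly the punctures of $\mathcal{M}_0$ and preserves tags at $\mathcal{S}$, the arc $\alpha$ sits in $T'$ with notched tagging at $P$ and the same tagging at $S$ as $\alpha^T$ carried in $T$. Hence the tagged arc in $T$ that is pointwise opposite to $\alpha$ (plain at $P$ and oppositely tagged at $S$ compared to $\alpha^T$) is precisely $\alpha'^T$, so $(\alpha'^T)^\circ$ is the required opposite-spiral lamination.

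The verification of the shear-coordinate identity is carried out by repeated application of rule~(1) of Definition~\ref{def:lamination}: applying rule~(1) at each puncture of $\mathcal{M}_0$ (trivially except at $P$, where the lamination spirals) transports the computation from $T'$ to $T$ while reversing the spiral of $(\alpha'^T)^\circ$ at $P$; subsequent applications of rule~(1) at $P$ and at the radial puncture $S$ reduce to a direct shear-coordinate computation in a triangulation whose notches live only at the radial puncture $S$, at which point rule~(2) together with the last-sentence convention of Definition~\ref{def:lamination} supplies the values. The main obstacle is the asymmetry at $S$: unlike Lemma~\ref{lem:done1}, where both endpoints of $\alpha$ lie in $\mathcal{M}_0$ and the tag swap is symmetric, here only the endpoint $P$ has its tag swapped and the swap at $S$ must be supplied implicitly by choosing $\alpha'^T$ rather than $\alpha^T$; confirming that this choice produces sign $+1$ at $\alpha$ (and $0$ on every other arc, including on the companion radial arc $\alpha'$ in $T'$) is where the care is required.
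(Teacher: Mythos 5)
Your proposal is correct and uses exactly the paper's argument: the lamination you choose, the elementary lamination of the companion radial arc at $S$ (your $\alpha'^T$), is precisely the paper's $\beta$, ``the unique other arc with endpoints $P$ and $S$,'' and the conclusion follows from Lemma \ref{lem:vertexdone} via $b_\alpha(T',\beta^\circ)=1$. Your additional discussion of verifying the shear coordinates through rules (1) and (2) of Definition \ref{def:lamination} only fills in details the paper leaves implicit.
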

\begin{proof}
Suppose the end points of $\alpha$ are $P \in \mathcal{M}_0$ and $S \in \mathcal{S}$. Let $\beta$ be the unique other arc with endpoints $P$ and $S$. Then $$b_{\alpha}(T',\beta^{\circ})= 1.$$
Therefore $\alpha$ is done being mutated by Lemma \ref{lem:vertexdone}. 
\end{proof}

We now iteratively apply a similar mutation sequence to each set of punctures $\mathcal{M}_i$ for $i \geq 1.$

\subsection{Punctures in the interior of monogons.}\label{sec:muI}

For $i >0$; \begin{itemize}
\item Let $\musep^{\mathcal{M}_i}$ be a mutation sequence from Construction \ref{lem:sep} with $\mathcal{P}=\mathcal{M}_i$.
\item Take $\mucycle^{\mathcal{M}_i}$ to be the composition of mutation sequences $\mucycle^{P}$ for each cycle around a puncture $P \in \mathcal{M}_i,$ that is not a radial puncture in $ \musep^{\mathcal{M}_i}\musep^{\mathcal{M}_{i-1},*}\mucycle^{\mathcal{M}_{i-1}}\musep^{\mathcal{M}_{i-1}} \cdots \musep^{\mathcal{M}_0,*}\mucycle^{\mathcal{M}_0}\musep^{\mathcal{M}_0}(T).$
\item Construct $\musep^{\mathcal{M}_i,*}$ as we did in Subsection \ref{subsec:indstar}.
\end{itemize} 

Suppose the highest index for nonempty $\mathcal{M}_i$ is $k$. Define $$\mu^\mathcal{M}:=\musep^{\mathcal{M}_k,*}\mucycle^{\mathcal{M}_k}\musep^{\mathcal{M}_k} \cdots \musep^{\mathcal{M}_0,*}\mucycle^{\mathcal{M}_0}\musep^{\mathcal{M}_0}.$$

The proof that each mutation sequence $\musep^{\mathcal{M}_i,*}\mucycle^{\mathcal{M}_i}\musep^{\mathcal{M}_i}$ is a green sequence follows from an identical argument as in the sections above. 

\begin{example}\label{ex:main_5}
In our running example of $T^*$ we have: $$\musep^{\mathcal{M}_1} = (32,33,36).$$
After $\musep^{\mathcal{M}_1}$ we have two degenerate cycles consisting of 27 and 31, and 32 and 35 so
$$ \mucycle^{\mathcal{M}_1} = (27, 31,32, 35).$$
Now in $(\musep^{\mathcal{M}_1})^{-1}$ we replace 36 by 34 as they are incident to a radial puncture, and as 35 was interchanged with 32 we replace 32 with 35 so that $$ \musep^{\mathcal{M}_1,*} = (24,33,35).$$ See Figures \ref{fig:main_example_triangulation_4} and \ref{fig:main_example_triangulation_5}.
\end{example}

\begin{remark}
As mentioned previously, Ladkani showed that once-punctured closed surfaces do not admit quivers with maximal green sequences. In this case the procedure we describe here reduces to mutating all the loops based at the puncture so they are no longer loops, applying Lemma \ref{cycle}, and then mutating the loops back into place. However, our approach fails since every arc in the triangulation is a loop and there is no way to mutate them into arcs that are not loops. 
\end{remark}

\subsection{Changing the tagging at X}
Let $T'=\mu^\mathcal{M}(T).$
Take $\mathcal{P}= \{X\}$ in Construction \ref{lem:sep} to construct a mutation sequence $\musep^X$. Note that this mutation sequence only mutates loops at $X$ and none of these arcs have been mutated yet so they are all green. Therefore $\musep^X$ is a green sequence for $T'$. 
We apply a mutation sequence $\mucycle^X$ to the arcs incident to $X$ in $\musep^X(T')$. This changes the tagging at puncture $X.$ The fact that $\mucycle^X$ is a green sequence for $\musep^X(T')$ follows from Lemma \ref{lem:cycleok}.

We also define a mutation sequence $\musep^{X,*}$ in a similar way as we did above. However since $X$ is the only puncture in our independent set we do not create any new radial punctures during $\musep^X$. Therefore when constructing $\musep^{X,*}$ we only need to apply the modification from above which replaces arcs that were interchanged during $\mucycle^X$. The fact that $\musep^{P,*}$ is green sequence for $\mucycle^P\musep^P(T')$ follows the same proof as in Lemma \ref{lem:reverse}.

\begin{example}\label{ex:main_6}
We construct $\musep^X$, $\mucycle^X$ and $\musep^{X,*}$ for $\mu^\mathcal{M}(T^*)$ similar to as we did above. We first make $X$ independent of itself by applying the mutation sequence $$\musep^X = (2,13, 23, 24, 1).$$ Note that $24$ is mutated here because $\iota(24)$ is a loop based at $X$. Then we apply the Cycle Lemma to the arcs incident to $X$ to obtain the sequence $$\mucycle^X = (6, 7, 22, 25, 23, 15, 14, 2, 8, 19, 30, 28, 12, 10, 9, 12, 28, 30, 19, 8, 2, 14, 15, 23, 25, 22, 7, 6).$$ There is no modification to make to $(\musep^{X})^{-1}$ to obtain $\musep^{X,*}$ because arcs 10 and 11 are not mutated during $\musep^{X}$ so $$\musep^{X,*} = (1, 24, 23, 13, 2).$$ See Figures \ref{fig:main_example_triangulation_5} and \ref{fig:main_example_triangulation_6}.
\end{example}

\subsection{Maximal Green Sequence}
Putting together the previous lemmas we have the following theorem. 
\begin{theorem}\label{thm:mgsclosed}
For any triangulation $T$ of a closed marked surface with at least two punctures the sequence $$\musep^{X,*}\mucycle^X\musep^X\mu^\mathcal{M}$$ is a maximal green sequence for $Q_T$.
\end{theorem}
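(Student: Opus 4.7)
The plan is to prove the theorem in two stages: first, that the concatenated sequence is a green sequence (at every mutation, the mutated vertex is green), and second, that after the full sequence every mutable vertex is red. The first stage is essentially bookkeeping over the three types of building blocks that make up the sequence, and the second stage follows from the elementary--lamination criterion of Lemma \ref{lem:vertexdone} applied in turn to each ``class'' of arcs in $T$.

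For the green property, I would proceed by induction on the index $i$ of the stratum $\mathcal{M}_i$, showing that $\musep^{\mathcal{M}_i,*}\mucycle^{\mathcal{M}_i}\musep^{\mathcal{M}_i}$ is a green sequence for the triangulation produced after processing $\mathcal{M}_0,\dots,\mathcal{M}_{i-1}$. Within a single stratum the argument is modular: $\musep^{\mathcal{M}_i}$ only mutates arcs of $E^{\mathcal{M}_i}_{\cdot}$, none of which has been touched before, so greenness is immediate from Lemma \ref{lem:greenind}; $\mucycle^{\mathcal{M}_i}$ is green by Lemma \ref{lem:cycleok}; and $\musep^{\mathcal{M}_i,*}$ is green by the argument of Lemma \ref{lem:reverse}. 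Then exactly the same three-step argument shows that $\musep^{X,*}\mucycle^X\musep^X$ is a green sequence on top of $\mu^{\mathcal{M}}(T)$, with $\mathcal{P}=\{X\}$ playing the role of a final independent set. Concatenating gives that the full sequence is green.

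For maximality, I would classify each arc $\gamma$ of the final triangulation $T''=\musep^{X,*}\mucycle^X\musep^X\mu^{\mathcal{M}}(T)$ by the location of its endpoints and exhibit, in each case, a frozen vertex whose unique incident arrow points into $\gamma$ (i.e., an elementary lamination $L$ with $b_{\gamma}(T'',L)=+1$ and $b_{\gamma'}(T'',L)=0$ for $\gamma'\neq\gamma$), so that Lemma \ref{lem:vertexdone} forces $\gamma$ to be red. The cases where both endpoints lie in a single $\mathcal{M}_i$, or one endpoint lies in $\mathcal{M}_i$ and the other at a radial puncture of $\mathcal{S}$ below it, are precisely Lemmas \ref{lem:done1} and \ref{lem:done2} applied at the stage when the stratum $\mathcal{M}_i$ is finished; these arcs are not touched again afterwards because the subsequent mutation sequences only mutate arcs with endpoints in $\mathcal{M}_j$ for $j>i$ or at $X$. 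The remaining cases are arcs incident to $X$ (with the other endpoint at $X$, in some $\mathcal{M}_i$, or at a radial puncture), for which the exact analogues of Lemmas \ref{lem:done1} and \ref{lem:done2} apply once we have performed $\musep^{X,*}\mucycle^X\musep^X$; the elementary lamination of the appropriate arc in $T$ again contributes $+1$ to $b_{\gamma}(T'',\cdot)$ and $0$ elsewhere.

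The main obstacle I expect is the careful tracking of which arcs of the intermediate triangulations have been ``interchanged'' by the various $\mucycle$ sequences, and the verification that, when reading the elementary lamination argument for maximality, the correct arc of the original $T$ is being used to produce the required shear coordinates. The definition of $\musep^{\mathcal{M}_i,*}$ is precisely engineered to handle both the radial--puncture substitution (replacing $\alpha_{P_i}$ by $\beta_{P_i}$) and the interchange substitution coming from Lemma \ref{cycle}; the inductive bookkeeping needs to confirm that after each stage the only frozen arrows that change are the expected ones, so that the elementary lamination certificates for the already-completed arcs are preserved all the way to $T''$. Once this is verified stratum by stratum and then for the final $X$-stage, the theorem follows by concatenation.
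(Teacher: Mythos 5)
Your proposal is correct and follows essentially the same route as the paper: greenness is obtained by concatenating the per-stratum arguments (Lemmas \ref{lem:greenind}, \ref{lem:cycleok}, and \ref{lem:reverse}, together with their analogues for the puncture $X$), and maximality from the elementary-lamination criterion of Lemma \ref{lem:vertexdone}. The only cosmetic difference is that the paper packages the maximality step as the single observation that the final triangulation coincides with $T$ except that the tagging is reversed at every puncture, so each arc receives a $+1$ shear coordinate from the elementary lamination of its counterpart in $T$, rather than your case-by-case classification of arcs by the location of their endpoints.
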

\begin{proof}
From our work above we have shown that this mutation sequence is a green sequence. It remains to be shown that it is in fact maximal. This follows from the same kind of arguments made at the end of Subsection \ref{subsec:indstar} in the proofs of Lemmas \ref{lem:done1} and \ref{lem:done2}. 

We have that $\musep^{X,*}\mucycle^X\musep^X\mu^\mathcal{M}(T)$ coincides with our original triangulation $T$ except that the tagging of arcs differs at every puncture of $\Sigma$. Therefore if the underlying untagged arc of $\alpha \in \musep^{X,*}\mucycle^X\musep^X\mu^\mathcal{M}(T)$ coincides with the underlying untagged arc of $\beta \in T$ then $$b_\alpha(\musep^{X,*}\mucycle^X\musep^X\mu^\mathcal{M}(T),\beta^\circ) = 1.$$ Therefore every arc in $\musep^{X,*}\mucycle^X\musep^X\mu^\mathcal{M}(T)$ is red. 
\end{proof}

\section{Existence of maximal green sequences for surfaces with nonempty boundary}\label{sec:boundary}
\begin{figure}
\centering
\begin{minipage}{.5\textwidth}
\begin{tikzpicture}[x=.8cm,y=.5cm,font=\tiny]
\coordinate (a) at (0,0);
\coordinate (b) at (6,0);
\coordinate (c) at (6,6);
\coordinate (d) at (0,6);
\coordinate (g) at (2,2);
\coordinate (f) at (4,2);
\coordinate (e) at (3,4);
\fill (a) circle (1.5pt);
\fill (b) circle (1.5pt);
\fill (c) circle (1.5pt);
\fill (d) circle (1.5pt);
\fill (e) circle (1.5pt);
\fill (f) circle (1.5pt);
\fill (g) circle (1.5pt);

\draw (a) to node [midway, below] {\tiny{2}} (b);
\draw (b) to node [midway, right] {\tiny{1}} (c);
\draw (c) to node [midway, above] {\tiny{2}} (d);
\draw (d) to node [midway, left] {\tiny{1}} (a);

\draw (d) to node [midway,fill=white]{3} (e);
\draw (e) to node [midway,fill=white]{4} (c);
\draw (e) to[bend left] node [midway,fill=white]{5} (b);
\draw (f) to node [midway,fill=white]{6} (b);
\draw (f) to node [midway,fill=white]{7} (a);
\draw (g) to node [midway,fill=white]{8} (a);
\draw (g) to node [midway,fill=white]{9} (d);
\draw (g) to (e);
\draw (e) to (f);
\draw (g) to (f);

\draw (a) node[left]{{\small $P$}};
\draw (d) node[left]{{\small $P$}};
\draw (b) node[right]{{\small $P$}};
\draw (c) node[right]{{\small $P$}};

\draw[pattern=north west lines] (g) -- (e) -- (f) -- (g);
\end{tikzpicture}
\end{minipage}
\begin{minipage}{.5\textwidth}
\begin{xy} 0;<.3pt,0pt>:<0pt,-.25pt>:: 
(0,0) *+{1} ="0",
(250,225) *+{2} ="1",
(100,175) *+{3} ="2",
(100,75) *+{4} ="3",
(400,75) *+{5} ="4",
(400,225) *+{6} ="5",
(400,400) *+{7} ="6",
(100,400) *+{8} ="7",
(100,300) *+{9} ="8",
"3", {\ar"0"},
"0", {\ar"4"},
"7", {\ar"0"},
"0", {\ar"8"},
"2", {\ar"1"},
"1", {\ar"3"},
"5", {\ar"1"},
"1", {\ar"6"},
"3", {\ar"2"},
"8", {\ar"2"},
"4", {\ar"3"},
"4", {\ar"5"},
"6", {\ar"5"},
"6", {\ar"7"},
"8", {\ar"7"},
\end{xy}
\end{minipage}
\caption{A triangulation $T^*$ of $\Sigma^*=\{1,1,1,\{3\}\}$ (left), and the corresponding quiver (right) that we extend to the triangulation in Figure \ref{ex:1}. Note that this quiver is an induced subquiver of the one in Figure \ref{ex:1}.}\label{ex:1b}
\end{figure}
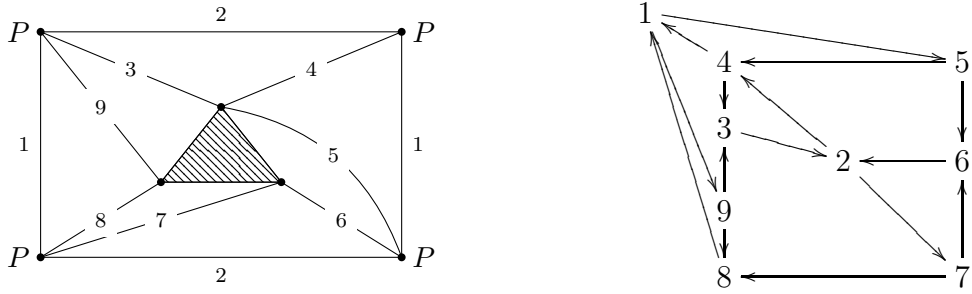
 We first recall a theorem from Muller and the definition of an induced subquiver.

 \begin{definition}
Given a subset $V$ of vertices of a quiver $Q,$ the \textbf{induced subquiver}, is the quiver with vertex set $V$ and edges consisting of the edges between pairs of vertices in $V$ that are in $Q$. 
\end{definition}
\begin{theorem}{\cite[Lemma 1.4.1]{muller}}\label{thm:subquiver}
If a quiver admits a maximal green sequence, then any induced subquiver admits a maximal green sequence. 
\end{theorem}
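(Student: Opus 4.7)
The plan is to take a maximal green sequence $\mathbf{i}=(i_1,\dots,i_\ell)$ for $Q$ and produce a maximal green sequence for the induced subquiver $Q'$ on vertex set $V'\subseteq Q_0$ by restriction. The natural candidate is the subsequence $\mathbf{i}'$ obtained by deleting from $\mathbf{i}$ every entry $i_k\notin V'$, keeping the relative order unchanged. So the goal is to prove two things: (a) each mutation in $\mathbf{i}'$ is at a green vertex of the appropriately mutated $\widehat{Q'}$, and (b) after performing $\mathbf{i}'$ all vertices of $Q'$ are red.

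To make the comparison, I would introduce an intermediate ice quiver $\widetilde{Q}$: start from $Q$ and add a frozen vertex $j'$ with arrow $j\to j'$ only for the $j\in V'$, leaving the vertices in $Q_0\setminus V'$ unframed. Then $\widetilde{Q}$ has mutable part $Q_0$ and frozen part $(V')'$, and the full sequence $\mathbf{i}$ can be applied to it. The first step is to observe that $\mathbf{i}$ remains a green sequence for $\widetilde{Q}$, since $\widetilde{Q}$ has fewer frozen vertices than $\widehat{Q}$ and any arrow from $(V')'$ into a mutable vertex in $\widetilde{Q}$ is also present in the corresponding mutation of $\widehat{Q}$; in particular, greenness in $\widehat{Q}$ at each step implies greenness in $\widetilde{Q}$. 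The second step is to check, using Theorem~\ref{thm:signcoh}, that at the end of $\mathbf{i}$ every vertex in $V'$ is red in $\widetilde{Q}$: this follows from maximality in $\widehat{Q}$ by projecting the $c$-vectors to the coordinates indexed by $V'$.

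The central step is to compare the restriction of $\mathbf{i}$-applied-to-$\widetilde{Q}$ with $\mathbf{i}'$-applied-to-$\widehat{Q'}$ on the common vertex set $V'\cup(V')'$. One proves by induction on $\ell$ the following claim: mutating $\widetilde{Q}$ at a vertex $j\notin V'$ does not change the induced sub-ice-quiver on $V'\cup(V')'$. The key point is that a new arrow in that sub-ice-quiver could only be created by a $2$-path through $j$, which in turn would require existing arrows between $j$ and both $V'$ and $(V')'$; however, any arrow between a vertex in $(V')'$ and a vertex outside $V'$ can be shown to be absent throughout the process, because the framing was attached only to $V'$ and the only way such an arrow could appear is via earlier mutations of the same type, so a straightforward induction rules it out. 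Once this is established, the mutations at $j\notin V'$ can be deleted without affecting the sub-ice-quiver on $V'\cup(V')'$, and the resulting sequence coincides with applying $\mathbf{i}'$ to $\widehat{Q'}$. Greenness of each step in $\mathbf{i}'$ and redness of every $V'$-vertex at the end then transfer from the corresponding statements for $\mathbf{i}$ and $\widetilde{Q}$.

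The main obstacle I expect is the inductive bookkeeping in the third step: verifying that mutations at vertices in $Q_0\setminus V'$ never introduce an arrow between $(V')'$ and the rest of the quiver that could propagate into the $V'\cup(V')'$ sub-ice-quiver. The cleanest way to handle this is to set it up as a single invariant (namely, that no arrow ever exists between a frozen vertex in $(V')'$ and a mutable vertex outside $V'$) and to verify the invariant is preserved under each of the three mutation rules in Definition~\ref{def:mutation}. With that invariant in hand, the comparison between $\mathbf{i}$ and $\mathbf{i}'$ is immediate and the theorem follows.
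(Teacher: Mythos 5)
The paper itself offers no proof of this statement (it is imported wholesale from Muller), so your argument has to stand on its own, and its central step is false. Take $Q\colon 1\to 2\to 3$ with $V'=\{1,3\}$, so that $Q'$ has no arrows, and take the maximal green sequence $\mathbf{i}=(2,1,3,2)$ for $Q$. In your ice quiver $\widetilde{Q}$ (frozen vertices $1',3'$ only), the very first mutation $\mu_2$ sees the $2$-path $1\to 2\to 3$ and creates the arrow $1\to 3$, so the induced sub-ice-quiver on $V'\cup(V')'$ already differs from $\widehat{Q'}$ after one step. Your justification misses exactly this case: a $2$-path $a\to j\to b$ with \emph{both} $a,b\in V'$ requires arrows between $j$ and $V'$ only, not between $j$ and $(V')'$, so the placement of the framing is irrelevant to whether new arrows appear \emph{inside} $V'$ --- and those arrows feed into all subsequent mutations. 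The proposed invariant fails too: continuing the example, $\mu_1$ applied to $\mu_2(\widetilde{Q})$ sees the $2$-path $2\to 1\to 1'$ and creates $2\to 1'$, an arrow from a mutable vertex outside $V'$ to a vertex of $(V')'$. Such arrows are produced by mutations at vertices \emph{inside} $V'$, a case your induction never examines, so the invariant is not preserved under the mutation rules of Definition \ref{def:mutation}.

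Consequently the conclusion that deleting the mutations at $Q_0\setminus V'$ recovers $\mu_{\mathbf{i}'}(\widehat{Q'})$ is simply wrong: in the example, $\mu_{\mathbf{i}}(\widetilde{Q})$ restricted to $\{1,3,1',3'\}$ is $\{1\to 3,\ 3'\to 3\}$ (vertex $1$ ends with no arrow to or from any frozen vertex), whereas $\mu_{(1,3)}(\widehat{Q'})=\{1'\to 1,\ 3'\to 3\}$. Your steps (a) and (b) are fine --- deleting frozen vertices does commute with mutation at mutable vertices, since new arrows arise only from $2$-paths through the mutated (hence mutable) vertex --- but they are statements about $\widetilde{Q}$, and the bridge from $\widetilde{Q}$ to $\widehat{Q'}$ is exactly what collapses. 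The restricted sequence $\mathbf{i}'$ does turn out to be a maximal green sequence for $Q'$ in these examples, and restriction is indeed the route Muller takes, but proving it requires comparing two mutation processes that genuinely diverge as ice quivers rather than identifying them; this is not repairable by more careful bookkeeping of the three mutation rules.
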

By the previous theorem it suffices to show that the quiver obtained from any triangulation of a surface with boundary is an induced subquiver of a quiver from a closed surface.

Suppose $\Sigma$ is a marked surface with boundary. We construct a marked surface without boundary $\overline{\Sigma}$ by gluing disks to the boundary components of $\Sigma$. To be more precise, for each boundary component $b_i$ of $\Sigma$ with $m_i$ marked points we glue a disk $D_i$ with $m_i$ marked points on its boundary and a single puncture if $m_i=1$ or $2$.

If $T$ is a triangulation of $\Sigma$ we can extend it to a triangulation $\overline{T}$ of $\overline{\Sigma}$. Let $B$ be a set of arcs of $\overline{\Sigma}$ that are isotopic to sections of the boundary component $\Sigma$. Then consider the collection of arcs $T \cup B$. This is a partial triangulation of the surface $\overline \Sigma$. We add more arcs to this collection to obtain a triangulation $\overline T$ of $\overline{\Sigma}$. Furthermore, by deleting vertices corresponding to the arcs of $\overline T \setminus T$ from $Q_{\overline{T}}$ we see that resulting quiver is $Q_T.$ That is $Q_T$ is an induced subquiver of $Q_{\overline{T}}$. To summarize we have the following lemma. 
\begin{lemma} \label{lem:subquiver}
Let $T$ be any triangulation of $\Sigma$. Let $\overline{\Sigma}$ be as above. Then the quiver $Q_T$ is an induced subquiver of the quiver $Q_{\overline{T}}$ corresponding to the triangulation $\overline{T}$ of $\overline{\Sigma}$. 
\end{lemma}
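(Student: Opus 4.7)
The plan is to verify directly that the construction of $\overline{T}$ described just above realizes $Q_T$ as the induced subquiver of $Q_{\overline{T}}$ on the vertex set indexed by arcs of $T$. The vertex match is automatic since $T \subseteq \overline{T}$, so the real content is that for any pair of arcs $\alpha_i, \alpha_j \in T$ the arrows between $v_{\alpha_i}$ and $v_{\alpha_j}$ computed in $Q_{\overline{T}}$ coincide with those computed in $Q_T$. By Definition \ref{def:quivfromtri} such arrows arise from triangles containing both arcs, with direction prescribed by clockwise cyclic order, so it is enough to set up a cyclic-order-preserving bijection between the triangles of $T$ containing $\{\alpha_i,\alpha_j\}$ and the triangles of $\overline{T}$ containing them.

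For the forward direction, each triangle $\Delta$ of $T$ is bounded by three edges, each of which is either an arc of $T$ or a boundary segment of $\partial\Sigma$. In the passage to $\overline{T}$ every such boundary segment has been promoted to an arc of $B$ (and in the degenerate cases $m_i \in \{1,2\}$ the disk $D_i$ is equipped with a puncture so that $T\cup B$ can be completed to a triangulation of $\overline{\Sigma}$), so $\Delta$ is literally a triangle of $\overline{T}$ with the same cyclic order of edges. Conversely, suppose $\Delta$ is a triangle of $\overline{T}$ with two of its edges equal to arcs $\alpha_i, \alpha_j \in T$. Since the arcs of $B$ lie in $\overline{T}$ and together separate $\Sigma$ from each glued disk $D_i$ inside $\overline{\Sigma}$, and since the interior of any triangle of $\overline{T}$ is a connected region disjoint from every arc of $\overline{T}$, the triangle $\Delta$ lies entirely on one side of each boundary arc of $B$ it meets. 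Because $\Delta$ abuts arcs that lie in $\Sigma$, it must lie in $\Sigma$; hence $\Delta$ is a triangle of $T$.

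The main (and essentially only) obstacle is this second direction, namely the geometric claim that a triangle of $\overline{T}$ with two $T$-edges cannot bulge across the glued seam into any $D_i$. This reduces to the fact that $B\subseteq \overline{T}$ separates $\Sigma$ from the complement $\overline{\Sigma}\setminus \Sigma$, which is by construction of $B$. Once this is in place, the bijection of triangles preserves the clockwise cyclic order of edges, so arrow counts and arrow orientations among pairs of $T$-vertices agree in $Q_T$ and $Q_{\overline{T}}$; equivalently, deleting the vertices of $\overline{T}\setminus T$ from $Q_{\overline{T}}$ returns exactly $Q_T$, establishing the induced-subquiver claim.
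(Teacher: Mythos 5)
Your proposal is correct and follows the same route as the paper: the paper simply observes that $T\cup B$ is a partial triangulation of $\overline{\Sigma}$ whose completion $\overline{T}$ adds arcs only inside the glued disks, and asserts that deleting the vertices of $\overline{T}\setminus T$ from $Q_{\overline{T}}$ recovers $Q_T$. What you have done is supply the details the paper leaves implicit — the bijection between triangles of $T$ and triangles of $\overline{T}$ meeting two arcs of $T$, justified by the fact that $B$ separates $\Sigma$ from the disks $D_i$ — so your argument is a fleshed-out version of the same proof.
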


\begin{theorem}\label{thm:bdy}
The quiver $Q_T$ has a maximal green sequence. 
\end{theorem}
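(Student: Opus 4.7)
The strategy is to deduce the existence of a maximal green sequence for $Q_T$ from the corresponding result for a triangulation of a suitable closed marked surface. Specifically, I will build a closed marked surface $\overline{\Sigma}$ satisfying the hypothesis of Theorem \ref{thm:mgsclosed}, together with an extension $\overline{T}$ of $T$, so that Lemma \ref{lem:subquiver} realizes $Q_T$ as an induced subquiver of $Q_{\overline{T}}$. Muller's Theorem \ref{thm:subquiver} will then deliver a maximal green sequence for $Q_T$ from one for $Q_{\overline{T}}$.

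The first step is to form $\overline{\Sigma}$ by gluing a disk $D_i$ to each boundary component $b_i$ of $\Sigma$, following the construction preceding Lemma \ref{lem:subquiver}. The subtlety is that the minimal construction there only places a puncture inside $D_i$ when $m_i \in \{1,2\}$, which may leave $\overline{\Sigma}$ with too few punctures to satisfy Theorem \ref{thm:mgsclosed}. To fix this, I will allow extra punctures to be placed in the interiors of the glued disks. Concretely, choose one glued disk $D_{i_0}$ and insert additional punctures inside it so that the total number of punctures of $\overline{\Sigma}$ is at least $2$ if the genus $g$ of $\Sigma$ is positive, and at least $4$ if $g=0$. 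Because $\Sigma$ has at least one boundary component and is not one of the forbidden marked surfaces, at least one glued disk is available, and adding punctures inside a disk keeps $\overline{\Sigma}$ a legitimate marked surface. Hence $\overline{\Sigma}$ is a closed marked surface satisfying case (A) or case (B) of Theorem \ref{thm:main}.

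Next, extend $T$ to a triangulation $\overline{T}$ of $\overline{\Sigma}$. Let $B$ be the set of arcs in $\overline{\Sigma}$ isotopic to segments of the original boundary $\partial \Sigma$; then $T \cup B$ is a partial triangulation, which may be completed by triangulating the interior of each glued disk $D_i$ arbitrarily (adding, in particular, the arcs needed to triangulate around the newly inserted punctures in $D_{i_0}$). By the same argument as in Lemma \ref{lem:subquiver}, deleting the vertices of $Q_{\overline{T}}$ corresponding to arcs in $\overline{T}\setminus T$ recovers $Q_T$, so $Q_T$ is an induced subquiver of $Q_{\overline{T}}$.

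To finish, Theorem \ref{thm:mgsclosed} (the result proven in Section \ref{sec:proof} for closed surfaces satisfying (A) or (B)) guarantees a maximal green sequence for $Q_{\overline{T}}$, and Theorem \ref{thm:subquiver} then yields a maximal green sequence for $Q_T$. The main obstacle is the accounting in the second step: ensuring that the enhanced $\overline{\Sigma}$ is genuinely a valid marked surface with enough punctures, and in particular that no excluded case (such as a sphere with fewer than four punctures, or a once-punctured closed surface) sneaks in. This is handled uniformly by always inserting enough punctures into a single glued disk. All other steps are direct applications of results already established in the paper.
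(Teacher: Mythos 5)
Your proof is correct and follows the paper's argument exactly: glue disks to the boundary components to form a closed surface $\overline{\Sigma}$, realize $Q_T$ as an induced subquiver of $Q_{\overline{T}}$ via Lemma \ref{lem:subquiver}, and combine Theorem \ref{thm:mgsclosed} with Muller's Theorem \ref{thm:subquiver}. The extra punctures you insert into a glued disk are a harmless but unnecessary precaution, since the marked points on each boundary component become punctures of $\overline{\Sigma}$ after gluing, and a short case check (using that the excluded unpunctured/once-punctured small polygons cannot occur) shows the paper's construction already yields at least two punctures in positive genus and at least four on the sphere.
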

\begin{proof}
By Theorem \ref{thm:mgsclosed} $Q_{\overline{T}}$ has a maximal green sequence and by Lemma \ref{lem:subquiver} $Q_T$ is an induced subquiver of $Q_{\overline{T}}$. Therefore by Theorem \ref{thm:subquiver} the quiver $Q_T$ has a maximal green sequence. 
\end{proof}
\section{Exceptional quivers of finite mutation type}\label{sec:exceptional}
Recall, a quiver $Q$ is said to be of finite mutation type if $\mut(Q)$ is finite. Felikson, Shapiro, and Tumarkin showed that every quiver of finite mutation type has rank 2, arises from a triangulation of marked surface, or is in one of 11 exceptional mutation classes \cite{felikson}. As we mentioned in the introduction every rank 2 quiver has a simple maximal green sequence. The 11 exceptional mutation classes are represented by the quivers $\mathbb{E}_6,\mathbb{E}_7,\mathbb{E}_8,\widetilde{\mathbb{E}_6},\widetilde{\mathbb{E}_7},\widetilde{\mathbb{E}_8},\mathbb{E}_6^{(1,1)},\mathbb{E}_7^{(1,1)},\mathbb{E}_8^{(1,1)}, \mathbb{X}_6,$ and $\mathbb{X}_7.$ 

For four of these cases it is known whether or not these quivers have a maximal green sequence. 

\begin{theorem}{\cite[Theorem 4.1]{brustle}}\label{thm:finite}
Every quiver mutation equivalent to $\mathbb{E}_6,\mathbb{E}_7,$ and $\mathbb{E}_8$ has a maximal green sequence. 
\end{theorem}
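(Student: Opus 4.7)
The plan is to proceed in two stages: first construct a maximal green sequence for a specific acyclic representative of each mutation class, then propagate the existence to the rest of the class.

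For the first stage, fix the standard linear orientation of $\mathbb{E}_n$ (for $n=6,7,8$), which is acyclic. Pick an admissible ordering $v_1, \ldots, v_n$ of the vertices so that each $v_i$ is a source of the induced subquiver on $\{v_i, v_{i+1}, \ldots, v_n\}$. I claim that $(v_1, v_2, \ldots, v_n)$ is a maximal green sequence. Indeed, in the framed quiver $\hat{Q}$ the source $v_1$ has no incoming arrows from mutable vertices and only the arrow $v_1 \to v_1'$ at the frozen level, so $v_1$ is green by Definition \ref{def:green1}. Mutating at $v_1$ reverses the arrow $v_1 \to v_1'$ to $v_1' \to v_1$; by Lemma \ref{lem:vertexdone} this means $v_1$ is permanently red. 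Moreover, since $v_1$ had no incoming arrows in $Q$, the mutation $\mu_{v_1}$ does not alter the induced subquiver on $\{v_2, \ldots, v_n\}$, which is then the standard acyclic orientation on $n-1$ vertices with $v_2$ as a new source. Induction on $n$ finishes the construction.

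For the second stage, I would exploit the finite-type structure: since $\mathcal{A}(\mathbb{E}_n)$ is a cluster algebra of finite type, $\mut(\mathbb{E}_n)$ is finite and so is the framed mutation class. Fix $Q' \in \mut(\mathbb{E}_n)$ and connect it to the standard orientation $Q$ by a mutation sequence. I then invoke the Rotation Lemma of Br\"ustle--Hermes--Igusa--Todorov (\cite[Theorem 3]{brustle2}): if $Q$ admits a MGS that begins with $\mu_k$, then $\mu_k(Q)$ also admits a MGS. By reordering or rechoosing the MGS at each step so that it begins with the mutation required to move one step closer to $Q'$, one propagates MGS existence along the path from $Q$ to $Q'$. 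For the reordering to be available, one uses the characterization in finite type that the $c$-vectors at any seed realize a signed basis by roots of the Dynkin root system $\mathbb{E}_n$, combined with sign-coherence (Theorem \ref{thm:signcoh}), so that any green vertex can be mutated first while staying in a MGS.

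The main obstacle is exactly this flexibility claim in the second stage: namely, that for any quiver along the path and any green vertex $k$ in its framed quiver, there exists a MGS starting with $\mu_k$. A clean route is to combine sign-coherence with the fact that in finite type the positive $c$-vectors at any seed form a positive real root system whose associated simple reflection data extends to a full reduced expression for the longest element $w_0$; each reduced expression for $w_0$ yields a MGS, and by choosing a reduced expression beginning with the simple reflection corresponding to $k$, one obtains the required MGS. An alternative, more computational approach would be to enumerate the finitely many mutation-equivalence classes of $\mathbb{E}_6, \mathbb{E}_7, \mathbb{E}_8$ and exhibit explicit MGS for each, much as the author does for the exceptional cases in Section \ref{sec:exceptional}; this bypasses the reordering lemma entirely at the cost of requiring a finite (but nontrivial) computer calculation.
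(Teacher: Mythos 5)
The first thing to note is that the paper does not prove this statement at all: Theorem \ref{thm:finite} is imported verbatim as \cite[Theorem 4.1]{brustle}, so there is no in-paper argument to compare against, and your proposal must be judged on its own. Your first stage is correct and standard: mutating an acyclic quiver along an admissible source ordering is the classical maximal green sequence for acyclic quivers, and your bookkeeping (no incoming arrows at each $v_i$ when it is mutated, hence no new arrows into $v_i'$, hence Lemma \ref{lem:vertexdone} applies) checks out.

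The gap is in stage two. The Rotation Lemma converts an MGS of $R$ whose \emph{first} letter is $k$ into \emph{some} MGS of $\mu_k(R)$, but gives no control over the first letter of the resulting sequence; so to walk from the acyclic seed to an arbitrary $Q'\in\mut(\mathbb{E}_n)$ you need, at \emph{every} intermediate seed, an MGS beginning with the prescribed next mutation --- your ``flexibility claim.'' You correctly identify this as the crux, but the justification offered does not hold up. The assertion that ``each reduced expression for $w_0$ yields a MGS'' is false as stated already in type $A_2$: the quiver $1\to 2$ has the MGS $(1,2)$ of length $2$, while every reduced word for $w_0$ has length $3$; the correspondences that do exist relate MGS's to maximal chains in the lattice of torsion classes of the path algebra, not to reduced words for $w_0$ (which govern the weak order, i.e.\ the preprojective algebra). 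Moreover the claim is needed at non-acyclic seeds along the path, where the identification of the positive $c$-vectors with a simple system that ``extends to a reduced word for $w_0$'' is unjustified. Note also that the flexibility claim, once proved, renders the whole two-stage architecture redundant: the standard proof (and, in essence, the proof in \cite{brustle}) is that for a quiver of finite cluster type the framed mutation class $\mut(\hat{Q'})$ is finite and the oriented exchange graph has no oriented cycles, so any greedily extended green sequence from $\hat{Q'}$ terminates, and by sign-coherence (Theorem \ref{thm:signcoh}) the terminal seed has every vertex red --- that is, \emph{any} greedy green sequence is already an MGS, with whatever first letter you like, for every quiver in the class. Your fallback of exhibiting explicit MGS's by computer for the finitely many quivers in the three classes would also work (it is exactly what the paper does in Section \ref{sec:exceptional} for the other exceptional classes), but as written it is a proposal for a computation rather than a proof.
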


\begin{theorem}{\cite{seven}}\label{thm:x7}
Neither quiver in the mutation class of $\mathbb{X}_7$ has a maximal green sequence. 
\end{theorem}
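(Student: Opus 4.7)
The plan is to reduce the non-existence to a finite, symmetry-assisted verification at the level of the framed quiver $\widehat{\mathbb{X}_7}$. The mutation class of $\mathbb{X}_7$ contains only two quivers, each with seven mutable vertices, and $\mathbb{X}_7$ itself has a large automorphism group permuting outer vertices around a central configuration. This small size and high symmetry are the structural features that make an explicit obstruction tractable.

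First I would enumerate, up to symmetry, all green mutation sequences starting from $\widehat{\mathbb{X}_7}$ while tracking the associated framed ice quiver (equivalently, the c-matrix of the current seed). By sign-coherence (Theorem~\ref{thm:signcoh}), at every stage each mutable vertex is either green or red, and a maximal green sequence is precisely one terminating with every vertex red, i.e.\ with c-matrix $-I_7$. Since $\mut(\mathbb{X}_7)$ has only two elements, the forgetful map from reachable framed quivers to underlying quivers is finite-to-one on the green subtree, so the enumeration is a genuinely finite process.

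The heart of the argument is to exhibit an invariant of green paths emanating from $\widehat{\mathbb{X}_7}$ that prevents reaching the all-red state. A natural candidate is a constraint on the reachable positive c-vectors: one would produce a linear functional on $\mathbb{Z}^7$ which is non-negative on every positive c-vector appearing along a green sequence from (any framed quiver in the green orbit of) $\widehat{\mathbb{X}_7}$, yet strictly negative on some standard basis vector. This would forbid $\{e_1,\dots,e_7\}$ from all occurring as negatives of c-vectors within a single green sequence, obstructing maximality. An equivalent formulation is to locate an attracting cycle in the oriented exchange graph of $\widehat{\mathbb{X}_7}$ which every green sequence is forced to re-enter before the fully red seed can be reached.

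The main obstacle is ensuring that the search is genuinely exhaustive: one must eliminate every branching choice, not just exhibit some failing strategies. The two inputs that make this feasible are (i) $|\mut(\mathbb{X}_7)|=2$ combined with the symmetries of each quiver, which collapse vast numbers of branches into a few orbits, and (ii) sign-coherence, which bounds the set of reachable seeds along green paths. With these in hand, an exhaustive computer-assisted traversal of the reachable framed quivers, verifying at each step that every green out-neighbor either loops back to a previously visited state or still leaves at least one mutable vertex green, closes the argument.
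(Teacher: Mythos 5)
First, a point of comparison: the paper does not prove this statement at all. Theorem \ref{thm:x7} is imported from Seven \cite{seven}, so there is no internal proof to measure your attempt against. Seven's published argument is a short structural one, not a search: both quivers in $\mut(\mathbb{X}_7)$ contain a vertex joined to every other mutable vertex by a double arrow, and he uses the mutation rule for $c$-vectors together with sign-coherence (Theorem \ref{thm:signcoh}) to show that this feature propagates an obstruction which prevents the $c$-matrix from ever reaching $-I$.

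Your proposal has a genuine gap at its load-bearing step. The claim that the forgetful map from reachable framed quivers to underlying quivers is ``finite-to-one on the green subtree,'' so that the enumeration is finite, is false: finiteness of $\mut(Q)$ does not imply finiteness of the set of reachable $c$-matrices. Already for the Kronecker quiver and for the Markov quiver the underlying mutation class is a single quiver while the $c$-matrices along mutation paths grow without bound, and the same happens for $\mathbb{X}_7$ once arrows to the frozen vertices accumulate. Worse, the oriented exchange graph of a framed quiver contains no oriented cycles, so a green out-neighbor can never ``loop back to a previously visited state,'' and there is no ``attracting cycle'' to find; by sign-coherence the only seed at which a green path can terminate is an all-red seed. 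Hence if $\mathbb{X}_7$ has no maximal green sequence --- the very thing to be proved --- then every green path can be prolonged indefinitely, infinitely many seeds are reachable, and your traversal never halts. The method is structurally capable only of confirming existence, never of certifying non-existence. The fallback invariant is also not viable as stated: the initial $c$-matrix is the identity, so every standard basis vector $e_i$ already occurs as a positive $c$-vector along every green path, and no linear functional can be non-negative on all reachable positive $c$-vectors yet strictly negative on some $e_i$. Any genuine obstruction must detect which $c$-vectors can co-occur within a single green sequence, which is precisely the non-linear, structural information Seven extracts from the double arrows at the central vertex.
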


We used the cluster algebra package in Sage developed by Gregg Musiker and Christian Stump to compute an explicit maximal green sequence for every quiver in the remaining 7 exceptional mutation classes. The maximal green sequences can be found on the authors webpage \cite{webpage}.
\begin{theorem}\label{thm:main2}
If $Q$ is a quiver that is mutation equivalent to $\widetilde{\mathbb{E}_6},\widetilde{\mathbb{E}_7},\widetilde{\mathbb{E}_8},\mathbb{E}_6^{(1,1)},\mathbb{E}_7^{(1,1)},\mathbb{E}_8^{(1,1)}$ or $\mathbb{X}_6,$ then $Q$ has a maximal green sequence. 
\end{theorem}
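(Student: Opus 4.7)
The plan is a finite exhaustive verification: since each of the seven mutation classes $\widetilde{\mathbb{E}_6},\widetilde{\mathbb{E}_7},\widetilde{\mathbb{E}_8},\mathbb{E}_6^{(1,1)},\mathbb{E}_7^{(1,1)},\mathbb{E}_8^{(1,1)},\mathbb{X}_6$ is of finite mutation type, the class $\mut(Q)$ is finite for each representative $Q$, so in principle one can exhibit a maximal green sequence on every representative. In fact, by the Rotation Lemma of Br\"ustle--Hermes--Igusa--Todorov, it is enough to find one quiver in each mutation class admitting a maximal green sequence \emph{and} verify that every other quiver in the class arises as an intermediate quiver along some maximal green sequence starting from that representative; however the cleanest route (and the one the paper takes) is simply to record an explicit sequence for every quiver in each class.

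The concrete steps are as follows. First, I would use the cluster algebra package in Sage (the \texttt{ClusterSeed} and \texttt{ClusterQuiver} classes of Musiker--Stump) to generate the full mutation class of each of the seven exceptional quivers; these classes are small enough (a few hundred to a few thousand quivers each) that enumeration terminates quickly. Second, for each quiver $Q$ in the class, I would work in the framed quiver $\hat{Q}$ and run a search for a sequence of green mutations ending in an all-red quiver. The natural search is a depth-first / heuristic backtracking on green vertices, using Theorem \ref{thm:signcoh} to detect when a vertex is red and terminating whenever all mutable vertices are red. A useful speed-up is to apply the Rotation Lemma as a caching device: if a maximal green sequence $(i_1,\dots,i_l)$ has been found for $\hat{Q}$, then $(i_2,\dots,i_l,i_1^*)$ (with the appropriate reindexing after framing) gives one for $\mu_{i_1}(\hat{Q})$, which drastically cuts down the number of quivers that require a fresh search.

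The main obstacle is purely computational rather than conceptual. Naive backtracking over all green vertices is exponential in the rank, and the exceptional classes (particularly $\widetilde{\mathbb{E}_8}$ and $\mathbb{E}_8^{(1,1)}$) have ranks $9$ and $10$ with mutation classes containing thousands of quivers, so an unguided search will not terminate in reasonable time. A sensible heuristic is to prefer mutating at sources (or vertices of highest in-degree from the frozen part), to mutate each vertex a bounded number of times, and to exploit symmetries/automorphisms of the underlying quiver to reduce the search. Once such a search yields one maximal green sequence per mutation class (the representatives $\widetilde{\mathbb{E}_n}, \mathbb{E}_n^{(1,1)},\mathbb{X}_6$ themselves can be treated first, and the rest handled via the Rotation Lemma together with a breadth-first traversal of the mutation class), the theorem follows.

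Finally, the proof is completed simply by tabulating the explicit maximal green sequences produced. Because the statement is $\forall Q \in \mut(\cdot)$ and each $\mut(\cdot)$ is finite, a certificate consisting of one maximal green sequence per quiver suffices, and its correctness can be independently rechecked by another implementation of quiver mutation on the framed quiver, verifying greenness at every intermediate step and redness at the terminal step. The data would then be deposited on the author's webpage, as the paper does.
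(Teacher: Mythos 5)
Your proposal is correct and matches the paper's approach: the paper likewise proves this theorem by using the Sage cluster algebra package of Musiker--Stump to enumerate each finite mutation class and to compute an explicit maximal green sequence for every quiver in it, with the resulting data posted on the author's webpage. The extra implementation details you supply (Rotation Lemma as a caching device, search heuristics) are sensible refinements but do not change the nature of the argument.
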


Combining Theorems \ref{thm:mgsclosed}, \ref{thm:bdy}, \ref{thm:finite}, \ref{thm:x7}, and \ref{thm:main2} and the fact about rank 2 quivers we have a complete classification of all quivers of finite mutation type which have a maximal green sequence. 
\begin{theorem}\label{thm:main3}
If $Q$ is a quiver of finite mutation type, then $Q$ has a maximal green sequence unless it arises from a triangulation of a once-punctured closed surface, or is one of the two quivers in the mutation class of $\mathbb{X}_7$.
\end{theorem}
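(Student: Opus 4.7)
The plan is to prove Theorem \ref{thm:main3} by direct case analysis using the Felikson-Shapiro-Tumarkin classification of finite mutation type quivers \cite{felikson}. That classification partitions $\mut(Q)$ into three disjoint types: either $Q$ has rank $2$, or $Q$ arises from a triangulation of some marked surface $\Sigma$, or $\mut(Q)$ is one of the $11$ exceptional mutation classes $\mathbb{E}_6, \mathbb{E}_7, \mathbb{E}_8, \widetilde{\mathbb{E}_6}, \widetilde{\mathbb{E}_7}, \widetilde{\mathbb{E}_8}, \mathbb{E}_6^{(1,1)}, \mathbb{E}_7^{(1,1)}, \mathbb{E}_8^{(1,1)}, \mathbb{X}_6, \mathbb{X}_7$. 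The rank $2$ case is immediate from the observation made in the introduction: mutating first at the source vertex and then at the remaining vertex produces a maximal green sequence.

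Next I would deal with the surface case, further splitting on whether $\Sigma$ has boundary. If $\partial \Sigma \neq \emptyset$, then Theorem \ref{thm:bdy} directly produces a maximal green sequence for $Q_T$. If $\Sigma$ is closed and has at least two punctures (or is a sphere with at least four punctures), then $\Sigma$ satisfies the hypotheses of Theorem \ref{thm:main}, so Theorem \ref{thm:mgsclosed} supplies an explicit maximal green sequence. The only surface subcase not covered is the once-punctured closed surface of genus at least one, which is precisely one of the exceptions listed in the statement; for these, Ladkani's result \cite{ladkani} (recalled in the introduction) guarantees that no triangulation yields a quiver with a maximal green sequence.

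For the exceptional mutation classes I would invoke the results already catalogued in Section \ref{sec:exceptional}. Theorem \ref{thm:finite} handles $\mathbb{E}_6, \mathbb{E}_7, \mathbb{E}_8$; Theorem \ref{thm:main2} handles the seven classes $\widetilde{\mathbb{E}_6}, \widetilde{\mathbb{E}_7}, \widetilde{\mathbb{E}_8}, \mathbb{E}_6^{(1,1)}, \mathbb{E}_7^{(1,1)}, \mathbb{E}_8^{(1,1)}, \mathbb{X}_6$ via the explicit Sage computations; and Theorem \ref{thm:x7} shows that neither of the two quivers in $\mut(\mathbb{X}_7)$ has a maximal green sequence. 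Collating these cases yields exactly the dichotomy claimed: existence of a maximal green sequence in every class except once-punctured closed surface triangulations and $\mathbb{X}_7$.

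Since the substantive work has been carried out in the earlier sections, there is no genuine obstacle remaining: the proof is a bookkeeping consolidation of Theorems \ref{thm:mgsclosed}, \ref{thm:bdy}, \ref{thm:finite}, \ref{thm:x7}, and \ref{thm:main2} together with the rank $2$ observation and Ladkani's non-existence result. The only thing to verify carefully is that the Felikson-Shapiro-Tumarkin trichotomy exhausts all finite mutation type quivers and that the ``exception'' side on the right of the statement matches exactly the classes where a non-existence result is known, so that the theorem is both correct and sharp.
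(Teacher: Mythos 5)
Your proposal is correct and follows essentially the same route as the paper, which proves Theorem \ref{thm:main3} by simply combining Theorems \ref{thm:mgsclosed}, \ref{thm:bdy}, \ref{thm:finite}, \ref{thm:x7}, and \ref{thm:main2} with the rank~$2$ observation and Ladkani's non-existence result, exactly as you do. Your version merely makes the case analysis over the Felikson--Shapiro--Tumarkin trichotomy more explicit than the paper's one-sentence consolidation.
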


\section{Acknowledgements}
The author would like to thank Kyungyong Lee for many helpful discussions, Pierre-Guy Plamondon for translating the abstract of the original work \cite{mills} to French, and Khrystyna Serhiyenko and Greg Muller for helpful suggestions to improve the manuscript.

\begin{figure}[t]
\centering
\begin{tikzpicture}[x=6cm,y=4.8cm,font=\tiny]
\coordinate (a) at (1.00000000000000,0.000000000000000);
\coordinate (b) at (0.707106781186548,0.707106781186548);
\coordinate (c) at (0.00000000000000,1.00000000000000);
\coordinate (d) at (-1,1);
\coordinate (e) at (-1.6,0);
\coordinate (f) at (-1,-1);
\coordinate (g) at (0,-1.00000000000000);
\coordinate (h) at (0.707106781186548,-0.707106781186548);
\coordinate (i) at (0.0,0);
\coordinate (q5) at (0,0.4);
\coordinate (q2) at (-1,0);
\coordinate (q3) at (-.4,-.15);
\coordinate (q4) at (-.5,-.3);
\coordinate (r1) at (-1.1,-.2);
\coordinate (r2) at (-.9,-.2);
\coordinate (r3) at (-1,-.4);
\coordinate (r4) at (-.5,-.9);

\coordinate (mb1) at (.2357,.098-1);
\coordinate (mb2) at (2*.2357,2*.098-1);
\coordinate (mt1) at (.2357,-.098+1);
\coordinate (mt2) at (2*.2357,-2*.098+1);
\coordinate (mt3) at (.855,0.707106781186548/2);
\coordinate (mb3) at (.855,-0.707106781186548/2);

\fill (a) circle (1.5pt);
\fill (b) circle (1.5pt);
\fill (c) circle (1.5pt);
\fill (d) circle (1.5pt);
\fill (e) circle (1.5pt);
\fill (f) circle (1.5pt);
\fill (g) circle (1.5pt);
\fill (h) circle (1.5pt);
\fill (i) circle (2.5pt);
\fill (q5) circle (2.5pt);
\fill (q4) circle (2.5pt);
\fill (q3) circle (2.5pt);
\fill (q2) circle (2.5pt);
\fill (r1) circle (2.5pt);
\fill (r2) circle (2.5pt);
\fill (r3) circle (2.5pt);
\fill (r4) circle (2.5pt);

\draw[color=darksagegreen] (c) to node [above] {2} (d) to node [left] {1} (e) to node [left] {2} (f) to node [below] {1} (g);

\draw[dashed] (a) to (b);
\draw[color=darksagegreen][color=red] (b) to node [above] {5} (c);
\draw[dashed] (g) to (h);
\draw[color=darksagegreen][color=red] (h) to node [right] {5} (a);

\draw[color=darksagegreen] (i) to node [midway,fill=white]{3} (mb2); 
\draw[color=darksagegreen] (mt3) to node [midway,fill=white]{3} (h);
\draw[color=darksagegreen][color=red] (b) to node [midway,fill=white]{4} (h);
\coordinate (mt4) at (.9,.23) ;
\coordinate (mb4) at (.9,-.23);

\draw[color=darksagegreen] (i) to[bend right] node [midway,fill=white]{14} (e);
\draw[color=darksagegreen] (g) to node [midway,fill=white]{8} (i);
\draw[color=darksagegreen] (i) to[bend right] node [midway,fill=white]{9} (c);
\draw[color=darksagegreen] (i) to[out=135, in = 225, min distance=25mm] node [left,fill=white]{12} (c);
\draw[color=darksagegreen][color=red] (c) to[bend right] node [midway,fill=white]{10} node[near end,sloped, rotate=90]{$\bowtie$} (q5);
\draw[color=darksagegreen] (q5) to node [midway,fill=white]{11} (c);
\draw[color=darksagegreen] (e) to node [midway,fill=white]{13} (c);
\draw[color=darksagegreen] (f) to[out=125,in=180] node [midway,fill=white]{28} (q2);
\draw[color=darksagegreen] (f) to[out=45, in = 0] node [midway,fill=white]{27} (q2);
\draw[color=darksagegreen] (i) to node [midway,fill=white]{6} (h);
\draw[color=darksagegreen] (i) to node [midway,fill=white]{7} (b);
\draw[color=darksagegreen] (q2) to node [midway,fill=white]{15} (e);
\draw[color=darksagegreen] (q3) to[out=90, in = 5] node [midway,fill=white]{16} (e);
\draw[color=darksagegreen][color=red] (e) to[out=15,in=145] (-.25,0) node [fill=white]{17} to[out=0,in=25] (f) ;
\draw[color=darksagegreen][color=red] (q4) to[out=-90, in= 45] node [midway,fill=white,outer sep=0pt] {19} node[near start,sloped, rotate=90]{$\bowtie$}(f);
\draw[color=darksagegreen][color=red] (e) to[out=5,in=165] (-.6,0) node [fill=white]{18} to[out=235,in=45] (f);
\draw[color=darksagegreen] (q3) to[out=200, in = 45] node [near start,fill=white]{20} (f);
\draw[color=darksagegreen] (q4) to[out=235,in=45] node [midway,fill=white]{26} (f);
\draw[color=darksagegreen] (q3) to[out=300, in = 35] node [midway,fill=white]{21} (f);
\draw[color=darksagegreen](i) to[out=250,in=25] node [midway,fill=white]{22}(f);
\draw[color=darksagegreen] (g) to[out=90,in=25] node [midway,fill=white]{23} (f);
\draw[color=darksagegreen] (q2) to node [midway,fill=white]{35} (r1);
\draw[color=darksagegreen] (q2) tonode [midway,fill=white]{34} (r2);
\draw[color=darksagegreen] (q2) to[out=-15,in = 0,min distance=12mm] node [near end,fill=white]{30} (r3);
\draw[color=darksagegreen] (q2) to[out = 195, in = 180,min distance=12mm] node [near end,fill=white]{31} (r3);
\draw[color=darksagegreen] (r2) to node [midway,fill=white]{36} (r1);
\draw[color=darksagegreen] (r1) to node [midway,fill=white]{33} (r3);
\draw[color=darksagegreen] (r2) to node [midway,fill=white]{32} (r3);

\draw[color=darksagegreen][color=red] (r3) to node [midway,fill=white]{29}(f) ;
\draw[color=darksagegreen] (g) to node [midway,fill=white]{25} (r4);
\draw[color=darksagegreen] (g) to[bend right] node [near end,sloped,rotate=90]{$\bowtie$} node [midway,fill=white]{24} (r4);

\draw (a) node[right]{{\small $X$}};
\draw (d) node[left]{{\small $X$}};
\draw (b) node[right]{{\small $X$}};
\draw (c) node[right]{{\small $X$}};
\draw (e) node[left]{{\small $X$}};
\draw (f) node[left]{{\small $X$}};
\draw (g) node[right]{{\small $X$}};
\draw (h) node[right]{{\small $X$}};

\draw (.2,0) node {$P_1$};
\draw (q2) node[above] {$P_2$};
\draw (q3) node[above right] {$P_3$};
\draw (q4) node[below right] {$P_4$};
\draw (q5) node[above left] {$P_5$};
\draw (r4) node[left]{$S_1$};
\end{tikzpicture}

\begin{tikzpicture}[x=6cm,y=4.8cm,font=\tiny]
\coordinate (a) at (1.00000000000000,0.000000000000000);
\coordinate (b) at (0.707106781186548,0.707106781186548);
\coordinate (c) at (0.00000000000000,1.00000000000000);
\coordinate (d) at (-1,1);
\coordinate (e) at (-1.6,0);
\coordinate (f) at (-1,-1);
\coordinate (g) at (0,-1.00000000000000);
\coordinate (h) at (0.707106781186548,-0.707106781186548);
\coordinate (i) at (0.0,0);
\coordinate (q5) at (0,0.4);
\coordinate (q2) at (-1,0);
\coordinate (q3) at (-.4,-.15);
\coordinate (q4) at (-.5,-.3);
\coordinate (r1) at (-1.1,-.2);
\coordinate (r2) at (-.9,-.2);
\coordinate (r3) at (-1,-.4);
\coordinate (r4) at (-.5,-.9);

\coordinate (mb1) at (.2357,.098-1);
\coordinate (mb2) at (2*.2357,2*.098-1);
\coordinate (mt1) at (.2357,-.098+1);
\coordinate (mt2) at (2*.2357,-2*.098+1);
\coordinate (mt3) at (.855,0.707106781186548/2);
\coordinate (mb3) at (.855,-0.707106781186548/2);

\fill (a) circle (1.5pt);
\fill (b) circle (1.5pt);
\fill (c) circle (1.5pt);
\fill (d) circle (1.5pt);
\fill (e) circle (1.5pt);
\fill (f) circle (1.5pt);
\fill (g) circle (1.5pt);
\fill (h) circle (1.5pt);
\fill (i) circle (2.5pt);
\fill (q5) circle (2.5pt);
\fill (q4) circle (2.5pt);
\fill (q3) circle (2.5pt);
\fill (q2) circle (2.5pt);
\fill (r1) circle (2.5pt);
\fill (r2) circle (2.5pt);
\fill (r3) circle (2.5pt);
\fill (r4) circle (2.5pt);

\draw[color=darksagegreen] (c) to node [above] {2} (d) to node [left] {1} (e) to node [left] {2} (f) to node [below] {1} (g);

\draw[dashed] (a) to (b);
\draw[color=darksagegreen] (b) to node [above] {5} (c);
\draw[dashed] (g) to (h);
\draw[color=darksagegreen] (h) to node [right] {5} (a);

\draw[color=darksagegreen][color=red] (i) to node [midway,fill=white]{3} node[near start,sloped, rotate=90]{$\bowtie$}(mb2); 
\draw[color=darksagegreen] (mt3) to node [midway,fill=white]{3} (h);
\draw[color=darksagegreen] (b) to node [midway,fill=white]{4} (h);
\coordinate (mt4) at (.9,.23) ;
\coordinate (mb4) at (.9,-.23);

\draw[color=darksagegreen][color=red] (i) to[bend right] node [midway,fill=white]{14} node[near start,sloped, rotate=90]{$\bowtie$}(e);
\draw[color=darksagegreen][color=red] (g) to node [midway,fill=white]{22} node[near end,sloped, rotate=90]{$\bowtie$}(i);
\draw[color=darksagegreen][color=red] (i) to[bend right] node [midway,fill=white]{9} node[near start,sloped, rotate=90]{$\bowtie$}(c);
\draw[color=darksagegreen][color=red] (i) to[out=135, in = 225, min distance=25mm] node [left,fill=white]{12} node[near start,sloped, rotate=90]{$\bowtie$}(c);
\draw[color=darksagegreen] (c) to[bend right] node [midway,fill=white]{10} node[near end,sloped, rotate=90]{$\bowtie$} (q5);
\draw[color=darksagegreen] (q5) to node [midway,fill=white]{11} (c);
\draw[color=darksagegreen] (e) to node [midway,fill=white]{13} (c);
\draw[color=darksagegreen][color=red] (f) to[out=125,in=180] node [midway,fill=white]{28} node[near end,sloped, rotate=90]{$\bowtie$} (q2);
\draw[color=darksagegreen][color=red] (f) to[out=45, in = 0] node [midway,fill=white]{30} node[near end,sloped, rotate=90]{$\bowtie$} (q2);
\draw[color=darksagegreen][color=red] (i) to node [midway,fill=white]{6} node[near start,sloped, rotate=90]{$\bowtie$} (h);
\draw[color=darksagegreen][color=red] (i) to node [midway,fill=white]{7} node[near start,sloped, rotate=90]{$\bowtie$}(b);
\draw[color=darksagegreen][color=red] (q2) to node [midway,fill=white]{15} node[near start,sloped, rotate=90]{$\bowtie$} (e);
\draw[color=darksagegreen][color=red] (q3) to[out=90, in = 5] node [midway,fill=white]{16} node[near start,sloped, rotate=90]{$\bowtie$} (e);
\draw[color=darksagegreen] (e) to[out=15,in=145] (-.25,0) node [fill=white]{17} to[out=0,in=25] (f) ;
\draw[color=darksagegreen] (q4) to[out=-90, in= 45] node [midway,fill=white,outer sep=0pt]{19} node[near start,sloped, rotate=90]{$\bowtie$} (f);
\draw[color=darksagegreen] (e) to[out=5,in=165] (-.6,0) node [fill=white]{18} to[out=235,in=45] (f);
\draw[color=darksagegreen][color=red] (q3) to[out=200, in = 45] node [near start,fill=white]{21} node[pos=.1,sloped, rotate=90]{$\bowtie$} (f);
\draw[color=darksagegreen] (q4) to[out=235,in=45] node [midway,fill=white]{26} (f);
\draw[color=darksagegreen][color=red] (q3) to[out=300, in = 35] node [midway,fill=white]{20} node[near start,sloped, rotate=90]{$\bowtie$} (f);
\draw[color=darksagegreen][color=red] (i) to[out=250,in=25] node [midway,fill=white]{8} node[near start,sloped, rotate=90]{$\bowtie$}(f);
\draw[color=darksagegreen] (g) to[out=90,in=25] node [midway,fill=white]{23} (f);
\draw[color=darksagegreen][color=red] (q2) to node [midway,fill=white]{35} node[near start,sloped, rotate=90]{$\bowtie$} (r1);
\draw[color=darksagegreen][color=red] (q2) tonode [midway,fill=white]{34}  node[near start,sloped, rotate=90]{$\bowtie$} (r2);
\draw[color=darksagegreen][color=red] (q2) to[out=-15,in = 0,min distance=12mm] node [near end,fill=white]{27} node[near start,sloped, rotate=90]{$\bowtie$}(r3);
\draw[color=darksagegreen][color=red] (q2) to[out = 195, in = 180,min distance=12mm] node [near end,fill=white]{31} node[near start,sloped, rotate=90]{$\bowtie$} (r3);
\draw[color=darksagegreen] (r2) to node [midway,fill=white]{36} (r1);
\draw[color=darksagegreen] (r1) to node [midway,fill=white]{33} (r3);
\draw[color=darksagegreen] (r2) to node [midway,fill=white]{32} (r3);

\draw[color=darksagegreen] (r3) to node [midway,fill=white]{29} (f) ;
\draw[color=darksagegreen] (g) to node [midway,fill=white]{25} (r4);
\draw[color=darksagegreen] (g) to[bend right] node [near end,sloped,rotate=90]{$\bowtie$} node [midway,fill=white]{24} (r4);

\draw (a) node[right]{{\small $X$}};
\draw (d) node[left]{{\small $X$}};
\draw (b) node[right]{{\small $X$}};
\draw(c) node[right]{{\small $X$}};
\draw(e) node[left]{{\small $X$}};
\draw (f) node[left]{{\small $X$}};
\draw (g) node[right]{{\small $X$}};
\draw (h) node[right]{{\small $X$}};

\draw (.2,0) node {$P_1$};
\draw (q2) node[above] {$P_2$};
\draw (q3) node[above] {$P_3$};
\draw (q4) node[below right] {$P_4$};
\draw (q5) node[above left] {$P_5$};
\draw (r4) node[left]{$S_1$};
\end{tikzpicture}
\caption{The top triangulation is $\musep^{\mathcal{M}_0}(T^*)$ obtained from applying the mutation sequence $\musep^{\mathcal{M}_0}$ from Example \ref{ex:main_sep} to the triangulation $T^*$ in Figure \ref{fig:main_example_triangulation}.The bottom triangulation $\mucycle^{\mathcal{M}_0}\musep^{\mathcal{M}_0}(T^*)$ obtained from applying the mutation sequence $\musep^{\mathcal{M}_0}$ from Example \ref{ex:main_3}.}\label{fig:main_example_triangulation_3}
\end{figure}

\begin{figure}[t]
\centering
\begin{tikzpicture}[x=6cm,y=4.8cm,font=\tiny]
\coordinate (a) at (1.00000000000000,0.000000000000000);
\coordinate (b) at (0.707106781186548,0.707106781186548);
\coordinate (c) at (0.00000000000000,1.00000000000000);
\coordinate (d) at (-1,1);
\coordinate (e) at (-1.6,0);
\coordinate (f) at (-1,-1);
\coordinate (g) at (0,-1.00000000000000);
\coordinate (h) at (0.707106781186548,-0.707106781186548);
\coordinate (i) at (0.0,0);
\coordinate (q5) at (-.1,0.5);
\coordinate (q2) at (-1,0);
\coordinate (q3) at (-.4,-.15);
\coordinate (q4) at (-.5,-.3);
\coordinate (r1) at (-1.1,-.2);
\coordinate (r2) at (-.9,-.2);
\coordinate (r3) at (-1,-.4);
\coordinate (r4) at (-.5,-.9);

\coordinate (mb1) at (.2357,.098-1);
\coordinate (mb2) at (2*.2357,2*.098-1);
\coordinate (mt1) at (.2357,-.098+1);
\coordinate (mt2) at (2*.2357,-2*.098+1);
\coordinate (mt3) at (.855,0.707106781186548/2);
\coordinate (mb3) at (.855,-0.707106781186548/2);

\fill (a) circle (1.5pt);
\fill (b) circle (1.5pt);
\fill (c) circle (1.5pt);
\fill (d) circle (1.5pt);
\fill (e) circle (1.5pt);
\fill (f) circle (1.5pt);
\fill (g) circle (1.5pt);
\fill (h) circle (1.5pt);
\fill (i) circle (2.5pt);
\fill (q5) circle (2.5pt);
\fill (q4) circle (2.5pt);
\fill (q3) circle (2.5pt);
\fill (q2) circle (2.5pt);
\fill (r1) circle (2.5pt);
\fill (r2) circle (2.5pt);
\fill (r3) circle (2.5pt);
\fill (r4) circle (2.5pt);

\draw[color=darksagegreen] (c) to node [above] {2} (d) to node [left] {1} (e) to node [left] {2} (f) to node [below] {1} (g);

\draw[dashed] (a) to (b) to (c);
\draw[dashed] (g) to (h) to (a);

\draw[color=darksagegreen][color=red] (mb1) to node [midway,fill=white]{4} node[near end,sloped, rotate=90]{$\bowtie$} (i) to node [midway,fill=white]{3} node[near start,sloped, rotate=90]{$\bowtie$} (mt1);
\draw[color=darksagegreen][color=red] (mb2) to node [midway,fill=white] {5} node[near end,sloped, rotate=90]{$\bowtie$} (i) to node [midway,fill=white]{5} node[near start,sloped, rotate=90]{$\bowtie$} (mt2);
\draw[color=darksagegreen][color=red] (mb3) to node [midway,fill=white]{3} node[near end,sloped, rotate=90]{$\bowtie$} (i) to node [midway,fill=white]{4} node[near start,sloped, rotate=90]{$\bowtie$} (mt3);

\coordinate (mt4) at (.9,.23) ;
\coordinate (mb4) at (.9,-.23);
\draw[color=darksagegreen][bend right,color=red] (mt4) to node [left]{5} (mb4);

\draw[color=darksagegreen][color=red] (i) to[bend right] node [midway,fill=white]{14} node[near start,sloped, rotate=90]{$\bowtie$} (e);
\draw[color=darksagegreen][color=red] (g) to node [midway,fill=white]{22} node[near end,sloped, rotate=90]{$\bowtie$} (i);
\draw[color=darksagegreen] (i) to node [midway,fill=white]{9} node[near start,sloped, rotate=90]{$\bowtie$} (c);
\draw[color=darksagegreen] (i) to[out=135, in = 225, min distance=25mm] node [left,fill=white]{12} node[near start,sloped, rotate=90]{$\bowtie$} (c);
\draw[color=darksagegreen][color=red] (i) to node[near start,sloped, rotate=90]{$\bowtie$} node[near end,sloped, rotate=90]{$\bowtie$} (q5);
\draw[color=darksagegreen] (q5) to node [midway,fill=white]{10} node[near start,sloped, rotate=90]{$\bowtie$} (c);
\draw[color=darksagegreen] (e) to node [midway,fill=white]{13} (c);
\draw[color=darksagegreen][color=red] (f) to[out=125,in=180] node [midway,fill=white]{28} node[near end,sloped, rotate=90]{$\bowtie$} (q2);
\draw[color=darksagegreen] (f) to[out=45, in = 0] node [midway,fill=white]{30} node[near end,sloped, rotate=90]{$\bowtie$} (q2);
\draw[color=darksagegreen] (i) to node [midway,fill=white]{6} node[near start,sloped, rotate=90]{$\bowtie$} (h);
\draw[color=darksagegreen] (i) to node [midway,fill=white]{7} node[near start,sloped, rotate=90]{$\bowtie$} (b);
\draw[color=darksagegreen] (q2) to node [midway,fill=white]{15} node[near start,sloped, rotate=90]{$\bowtie$} (e);
\draw[color=darksagegreen][color=red] (q2) to node[near start,sloped, rotate=90]{$\bowtie$} node[near end,sloped, rotate=90]{$\bowtie$} (i);
\draw[color=darksagegreen] [color=red](q3) to node[near start,sloped, rotate=90]{$\bowtie$} node[near end,sloped, rotate=90]{$\bowtie$} (i);
\draw[color=darksagegreen][color=red] (q3) to node[near start,sloped, rotate=90]{$\bowtie$} node[near end,sloped, rotate=90]{$\bowtie$} (q4);
\draw[color=darksagegreen][color=red] (q3) to[out=155,in=0] node[pos=.1,sloped, rotate=90]{$\bowtie$} node[near end,sloped, rotate=90]{$\bowtie$} (q2);
\draw[color=darksagegreen][color=red] (q4) to[out=135,in=0] node[pos=.1,sloped, rotate=90]{$\bowtie$}  node[near end,sloped, rotate=90]{$\bowtie$} (q2);
\draw[color=darksagegreen] (q4) to[out=235,in=45] node [midway,fill=white]{19} node[near start,sloped, rotate=90]{$\bowtie$} (f);
\draw[color=darksagegreen][color=red] (q4) to node[near start,sloped, rotate=90]{$\bowtie$} node[near end,sloped, rotate=90]{$\bowtie$} (i);
\draw[color=darksagegreen](i) to[out=250,in=45] node [midway,fill=white]{8} node[near start,sloped, rotate=90]{$\bowtie$} (f);
\draw[color=darksagegreen] (g) to[out=90,in=45] node [midway,fill=white]{23} (f);
\draw[color=darksagegreen][color=red] (q2) to node [midway,fill=white]{35} node[near start,sloped, rotate=90]{$\bowtie$} (r1);
\draw[color=darksagegreen][color=red] (q2) to node [midway,fill=white]{34} node[near start,sloped, rotate=90]{$\bowtie$} (r2);
\draw[color=darksagegreen][color=red] (q2) to[out=-15,in = 0,min distance=12mm] node [near end,fill=white]{27} (r3);
\draw[color=darksagegreen] (q2) to[out = 195, in = 180,min distance=12mm] node [near end,fill=white]{31} (r3);
\draw[color=darksagegreen] (r2) to node [midway,fill=white]{36} (r1);
\draw[color=darksagegreen] (r1) to node [midway,fill=white]{33} (r3);
\draw[color=darksagegreen] (r2) to node [midway,fill=white,inner sep=0pt,outer sep=1pt]{32} (r3);

\draw[color=darksagegreen][color=red] (q2) to[out=0,in = 0,min distance=14mm] node[near start,sloped, rotate=90]{$\bowtie$} (-1,-.5);
\draw[color=darksagegreen][color=red] (q2) to[out = 180, in = 180,min distance=14mm] node[near start,sloped, rotate=90]{$\bowtie$} (-1,-.5);
\draw[color=darksagegreen][color=red] (-1,-.5) node [fill=white]{29};
\draw[color=darksagegreen] (g) to node [midway,fill=white]{25} (r4);
\draw[color=darksagegreen] (g) to[bend right] node [near end,sloped,rotate=90]{$\bowtie$} node [midway,fill=white]{24} (r4);

\draw (a) node[right]{{\small $X$}};
\draw (d) node[left]{{\small $X$}};
\draw (b) node[right]{{\small $X$}};
\draw (c) node[right]{{\small $X$}};
\draw (e) node[left]{{\small $X$}};
\draw (f) node[left]{{\small $X$}};
\draw (g) node[right]{{\small $X$}};
\draw (h) node[right]{{\small $X$}};

\draw (.2,0) node {$P_1$};
\draw (q2) node[above] {$P_2$};
\draw (q3) node[above] {$P_3$};
\draw (q4) node[below right] {$P_4$};
\draw (q5) node[above left] {$P_5$};
\draw (r4) node[left]{$S_1$};
\end{tikzpicture}
\begin{tikzpicture}[x=1.5cm,y=1.5cm,font=\tiny]
\coordinate (p2) at (0,2);
\coordinate (l) at (0,-2);
\coordinate (r1) at (0,-1);
\coordinate (r2) at (.5,0.5);
\coordinate (r3) at (-.5,0.5);

\draw[color=darksagegreen] (r1) to node [fill=white]{32} (r2);
\draw[color=darksagegreen] (r2) to node [fill=white]{36} (r3);
\draw[color=darksagegreen] (r3) to node [fill=white]{33} (r1);
\draw[color=darksagegreen] (r2) to node [fill=white]{34} node[near end,sloped, rotate=90]{$\bowtie$} (p2);
\draw[color=darksagegreen] (r3) to node [fill=white]{35} node[near end,sloped, rotate=90]{$\bowtie$} (p2);
\draw[color=darksagegreen] (r1) to[out=0,in=-10,min distance=14mm] node [fill=white]{27} node[near end,sloped, rotate=90]{$\bowtie$} (p2);
\draw[color=darksagegreen] (r1) to[out=180,in=190,min distance=14mm] node [fill=white]{31} node[near end,sloped, rotate=90]{$\bowtie$} (p2);

\draw[color=red] (p2) to[out=0,in=0,min distance=24mm] node[near start,sloped, rotate=90]{$\bowtie$} (l) to[out=180,in=180,min distance=24mm] node[near end,sloped, rotate=90]{$\bowtie$} (p2);

\draw[color=darksagegreen][color=red] (l) node [fill=white]{29};

\fill (p2) node[above]{$P_2$} circle (1.5pt);
\fill (r1) circle (1.5pt);
\fill (r2) circle (1.5pt);
\fill (r3) circle (1.5pt);
\end{tikzpicture}
\begin{tikzpicture}[x=1.5cm,y=1.5cm,font=\tiny]
\coordinate (p2) at (0,2);
\coordinate (l) at (0,-2);
\coordinate (r1) at (0,-1.5);
\coordinate (r2) at (0,.5);
\coordinate (r3) at (0,-.5);
\coordinate (l2) at (0,-1);

\draw[color=darksagegreen] (p2) to[out=-10,in=0] node[near end,fill=white]{32} node[near start,sloped, rotate=90]{$\bowtie$} node[near start,sloped, rotate=90]{$\bowtie$} (r3);
\draw[color=darksagegreen][color=red] (r2) to[bend right] node [fill=white]{36} node[near start,sloped, rotate=90]{$\bowtie$} node[near end,sloped, rotate=90]{$\bowtie$} (p2);
\draw[color=darksagegreen][color=red] (p2) to[out=180,in=180] node[near start,sloped, rotate=90]{$\bowtie$} (l2) node [fill=white]{33} to[out=0,in=0] node[near end,sloped, rotate=90]{$\bowtie$} (p2);
\draw[color=darksagegreen][color=red] (r2) to[bend left] node [fill=white]{34} node[near end,sloped, rotate=90]{$\bowtie$} (p2);
\draw[color=darksagegreen] (r3) to[out=180,in=190] node [near start, fill=white]{35} node[near end,sloped, rotate=90]{$\bowtie$} (p2);
\draw[color=darksagegreen] (r1) to[out=0,in=-10,min distance=14mm] node [near start,fill=white]{27} (p2);
\draw[color=darksagegreen] (r1) to[out=180,in=190,min distance=14mm] node [near start, fill=white]{31} (p2);

\draw[color=red] (p2) to[out=0,in=0,min distance=24mm] node[near start,sloped, rotate=90]{$\bowtie$} (l) to[out=180,in=180,min distance=24mm] node[near end,sloped, rotate=90]{$\bowtie$} (p2);
\draw[color=darksagegreen] (l) node [fill=white]{29};

\fill (p2) node[above]{$P_2$} circle (1.5pt);
\fill (r1) circle (1.5pt);
\fill (r2) circle (1.5pt);
\fill (r3) circle (1.5pt);
\end{tikzpicture}
\begin{tikzpicture}[x=1.5cm,y=1.5cm,font=\tiny]
\coordinate (p2) at (0,2);
\coordinate (l) at (0,-2);
\coordinate (r1) at (0,-1.5);
\coordinate (r2) at (0,.5);
\coordinate (r3) at (0,-.5);
\coordinate (l2) at (0,-1);

\draw[color=darksagegreen][color=red] (p2) to[out=-10,in=0] node[near end,fill=white]{35} node[pos=.9,sloped, rotate=90]{$\bowtie$} node[near start,sloped, rotate=90]{$\bowtie$} node[near start,sloped, rotate=90]{$\bowtie$} (r3);
\draw[color=darksagegreen][color=red] (r2) to[bend right] node [fill=white]{36} node[near start,sloped, rotate=90]{$\bowtie$} node[near end,sloped, rotate=90]{$\bowtie$} (p2);
\draw[color=darksagegreen] (p2) to[out=180,in=180] node[near start,sloped, rotate=90]{$\bowtie$} (l2) node [fill=white]{33} to[out=0,in=0] node[near end,sloped, rotate=90]{$\bowtie$} (p2);
\draw[color=darksagegreen] (r2) to[bend left] node [fill=white]{34} node[near end,sloped, rotate=90]{$\bowtie$} (p2);
\draw[color=darksagegreen][color=red] (r3) to[out=180,in=190] node [near start, fill=white]{32} node[near end,sloped, rotate=90]{$\bowtie$} node[pos=.1,sloped, rotate=90]{$\bowtie$} (p2);
\draw[color=darksagegreen][color=red] (r1) to[out=0,in=-10,min distance=14mm] node [near start,fill=white]{31} node[near end,sloped, rotate=90]{$\bowtie$} node[pos=.1,sloped, rotate=90]{$\bowtie$} (p2);
\draw[color=darksagegreen][color=red] (r1) to[out=180,in=190,min distance=14mm] node [near start, fill=white]{27} node[near end,sloped, rotate=90]{$\bowtie$} node[pos=.1,sloped, rotate=90]{$\bowtie$} (p2);

\draw[color=red] (p2) to[out=0,in=0,min distance=24mm] node[near start,sloped, rotate=90]{$\bowtie$} (l) to[out=180,in=180,min distance=24mm] node[near end,sloped, rotate=90]{$\bowtie$} (p2);

\draw[color=red] (l) node [fill=white]{29};

\fill (p2) node[above]{$P_2$} circle (1.5pt);
\fill (r1) circle (1.5pt);
\fill (r2) circle (1.5pt);
\fill (r3) circle (1.5pt);
\end{tikzpicture}

\caption{The top triangulation is $\musep^{\mathcal{M}_0,*}\mucycle^{\mathcal{M}_0}\musep^{\mathcal{M}_0}(T^*)$ where $\musep^{\mathcal{M}_0,*}$ is defined in Example \ref{ex:main_4}. The bottom is a close up view of the interior of the monogon 29 (left) followed by the $\musep^{\mathcal{M}_1}\musep^{\mathcal{M}_0,*}\mucycle^{\mathcal{M}_0}\musep^{\mathcal{M}_0}(T^*)$ (center), and $\mucycle^{\mathcal{M}_1}e\musep^{\mathcal{M}_1}\musep^{\mathcal{M}_0,*}\mucycle^{\mathcal{M}_0}\musep^{\mathcal{M}_0}(T^*)$ (right). These mutation sequences are defined in Example \ref{ex:main_5}.}\label{fig:main_example_triangulation_4}
\end{figure}

\begin{figure}[t]
\centering
\begin{tikzpicture}[x=6cm,y=4.8cm,font=\tiny]
\coordinate (a) at (1.00000000000000,0.000000000000000);
\coordinate (b) at (0.707106781186548,0.707106781186548);
\coordinate (c) at (0.00000000000000,1.00000000000000);
\coordinate (d) at (-1,1);
\coordinate (e) at (-1.6,0);
\coordinate (f) at (-1,-1);
\coordinate (g) at (0,-1.00000000000000);
\coordinate (h) at (0.707106781186548,-0.707106781186548);
\coordinate (i) at (0.0,0);
\coordinate (q5) at (-.1,0.5);
\coordinate (q2) at (-1,0);
\coordinate (q3) at (-.4,-.15);
\coordinate (q4) at (-.5,-.3);
\coordinate (r1) at (-1.1,-.2);
\coordinate (r2) at (-.9,-.2);
\coordinate (r3) at (-1,-.4);
\coordinate (r4) at (-.5,-.9);

\coordinate (mb1) at (.2357,.098-1);
\coordinate (mb2) at (2*.2357,2*.098-1);
\coordinate (mt1) at (.2357,-.098+1);
\coordinate (mt2) at (2*.2357,-2*.098+1);
\coordinate (mt3) at (.855,0.707106781186548/2);
\coordinate (mb3) at (.855,-0.707106781186548/2);

\fill (a) circle (1.5pt);
\fill (b) circle (1.5pt);
\fill (c) circle (1.5pt);
\fill (d) circle (1.5pt);
\fill (e) circle (1.5pt);
\fill (f) circle (1.5pt);
\fill (g) circle (1.5pt);
\fill (h) circle (1.5pt);
\fill (i) circle (2.5pt);
\fill (q5) circle (2.5pt);
\fill (q4) circle (2.5pt);
\fill (q3) circle (2.5pt);
\fill (q2) circle (2.5pt);
\fill (r1) circle (2.5pt);
\fill (r2) circle (2.5pt);
\fill (r3) circle (2.5pt);
\fill (r4) circle (2.5pt);

\draw[color=darksagegreen] (c) to node [above] {2} (d) to node [left] {1} (e) to node [left] {2} (f) to node [below] {1} (g);

\draw[dashed] (a) to (b) to (c);
\draw[dashed] (g) to (h) to (a);

\draw[color=darksagegreen][color=red] (mb1) to node [midway,fill=white]{4} node[near end,sloped, rotate=90]{$\bowtie$} (i) to node [midway,fill=white]{3} node[near start,sloped, rotate=90]{$\bowtie$} (mt1);
\draw[color=darksagegreen][color=red] (mb2) to node [midway,fill=white] {5} node[near end,sloped, rotate=90]{$\bowtie$} (i) to node [midway,fill=white]{5} node[near start,sloped, rotate=90]{$\bowtie$} (mt2);
\draw[color=darksagegreen][color=red] (mb3) to node [midway,fill=white]{3} node[near end,sloped, rotate=90]{$\bowtie$} (i) to node [midway,fill=white]{4} node[near start,sloped, rotate=90]{$\bowtie$} (mt3);

\coordinate (mt4) at (.9,.23) ;
\coordinate (mb4) at (.9,-.23);
\draw[color=darksagegreen][bend right,color=red] (mt4) to node [left]{5} (mb4);

\draw[color=darksagegreen][color=red] (i) to[bend right] node [midway,fill=white]{14} node[near start,sloped, rotate=90]{$\bowtie$} (e);
\draw[color=darksagegreen][color=red] (g) to node [midway,fill=white]{22} node[near end,sloped, rotate=90]{$\bowtie$} (i);
\draw[color=darksagegreen] (i) to node [midway,fill=white]{9} node[near start,sloped, rotate=90]{$\bowtie$} (c);
\draw[color=darksagegreen] (i) to[out=135, in = 225, min distance=25mm] node [left,fill=white]{12} node[near start,sloped, rotate=90]{$\bowtie$} (c);
\draw[color=darksagegreen][color=red] (i) to node[near start,sloped, rotate=90]{$\bowtie$} node[near end,sloped, rotate=90]{$\bowtie$} (q5);
\draw[color=darksagegreen] (q5) to node [midway,fill=white]{10} node[near start,sloped, rotate=90]{$\bowtie$} (c);
\draw[color=darksagegreen] (e) to node [midway,fill=white]{13} (c);
\draw[color=darksagegreen][color=red] (f) to[out=125,in=180] node [midway,fill=white]{28} node[near end,sloped, rotate=90]{$\bowtie$} (q2);
\draw[color=darksagegreen] (f) to[out=45, in = 0] node [midway,fill=white]{30} node[near end,sloped, rotate=90]{$\bowtie$} (q2);
\draw[color=darksagegreen] (i) to node [midway,fill=white]{6} node[near start,sloped, rotate=90]{$\bowtie$} (h);
\draw[color=darksagegreen] (i) to node [midway,fill=white]{7} node[near start,sloped, rotate=90]{$\bowtie$} (b);
\draw[color=darksagegreen] (q2) to node [midway,fill=white]{15} node[near start,sloped, rotate=90]{$\bowtie$} (e);
\draw[color=darksagegreen][color=red] (q2) to node[near start,sloped, rotate=90]{$\bowtie$} node[near end,sloped, rotate=90]{$\bowtie$} (i);
\draw[color=darksagegreen] [color=red](q3) to node[near start,sloped, rotate=90]{$\bowtie$} node[near end,sloped, rotate=90]{$\bowtie$} (i);
\draw[color=darksagegreen][color=red] (q3) to node[near start,sloped, rotate=90]{$\bowtie$} node[near end,sloped, rotate=90]{$\bowtie$} (q4);
\draw[color=darksagegreen][color=red] (q3) to[out=155,in=0] node[pos=.1,sloped, rotate=90]{$\bowtie$} node[near end,sloped, rotate=90]{$\bowtie$} (q2);
\draw[color=darksagegreen][color=red] (q4) to[out=135,in=0] node[pos=.1,sloped, rotate=90]{$\bowtie$}  node[near end,sloped, rotate=90]{$\bowtie$} (q2);
\draw[color=darksagegreen] (q4) to[out=235,in=45] node [midway,fill=white]{19} node[near start,sloped, rotate=90]{$\bowtie$} (f);
\draw[color=darksagegreen][color=red] (q4) to node[near start,sloped, rotate=90]{$\bowtie$} node[near end,sloped, rotate=90]{$\bowtie$} (i);
\draw[color=darksagegreen](i) to[out=250,in=45] node [midway,fill=white]{8} node[near start,sloped, rotate=90]{$\bowtie$} (f);
\draw[color=darksagegreen] (g) to[out=90,in=45] node [midway,fill=white]{23} (f);

\draw[color=darksagegreen][color=red] (q2) to node[near start,sloped, rotate=90]{$\bowtie$} node[near end,sloped, rotate=90]{$\bowtie$} (r1);
\draw[color=red] (q2) to node[near start,sloped, rotate=90]{$\bowtie$} node[near end,sloped, rotate=90]{$\bowtie$} (r2);
\draw[color=red] (q2) to[out=-15,in = 0,min distance=12mm] node[near start,sloped, rotate=90]{$\bowtie$} node[near end,sloped, rotate=90]{$\bowtie$} (r3);
\draw[color=darksagegreen][color=red] (q2) to[out = 195, in = 180,min distance=12mm] node[near start,sloped, rotate=90]{$\bowtie$} node[near end,sloped, rotate=90]{$\bowtie$}(r3);
\draw[color=darksagegreen][color=red] (r2) to node[near start,sloped, rotate=90]{$\bowtie$} node[near end,sloped, rotate=90]{$\bowtie$}(r1);
\draw[color=darksagegreen][color=red] (r1) to node[near start,sloped, rotate=90]{$\bowtie$} node[near end,sloped, rotate=90]{$\bowtie$}(r3);
\draw[color=darksagegreen][color=red] (r2) to node[near start,sloped, rotate=90]{$\bowtie$} node[near end,sloped, rotate=90]{$\bowtie$} (r3);

\draw[color=darksagegreen][color=red] (q2) to[out=0,in = 0,min distance=14mm] node[near start,sloped, rotate=90]{$\bowtie$} (-1,-.5);
\draw[color=darksagegreen][color=red] (q2) to[out = 180, in = 180,min distance=14mm] node[near start,sloped, rotate=90]{$\bowtie$} (-1,-.5);
\draw[color=darksagegreen][color=red] (-1,-.5) node [fill=white]{29};
\draw[color=darksagegreen] (g) to node [midway,fill=white]{25} (r4);
\draw[color=darksagegreen] (g) to[bend right] node [near end,sloped,rotate=90]{$\bowtie$} node [midway,fill=white]{24} (r4);

\draw (a) node[right]{{\small $X$}};
\draw (d) node[left]{{\small $X$}};
\draw (b) node[right]{{\small $X$}};
\draw (c) node[right]{{\small $X$}};
\draw (e) node[left]{{\small $X$}};
\draw (f) node[left]{{\small $X$}};
\draw (g) node[right]{{\small $X$}};
\draw (h) node[right]{{\small $X$}};

\draw (.2,0) node {$P_1$};
\draw (q2) node[above] {$P_2$};
\draw (q3) node[above] {$P_3$};
\draw (q4) node[below right] {$P_4$};
\draw (q5) node[above left] {$P_5$};
\draw (r4) node[left]{$S_1$};
\end{tikzpicture}

\begin{tikzpicture}[x=6cm,y=4.8cm,font=\tiny]
\coordinate (a) at (1.00000000000000,0.000000000000000);
\coordinate (b) at (0.707106781186548,0.707106781186548);
\coordinate (c) at (0.00000000000000,1.00000000000000);
\coordinate (d) at (-1,1);
\coordinate (e) at (-1.6,0);
\coordinate (f) at (-1,-1);
\coordinate (g) at (0,-1.00000000000000);
\coordinate (h) at (0.707106781186548,-0.707106781186548);
\coordinate (i) at (0.0,0);
\coordinate (q5) at (-.1,0.5);
\coordinate (q2) at (-1,0);
\coordinate (q3) at (-.4,-.15);
\coordinate (q4) at (-.5,-.3);
\coordinate (r1) at (-1.1,-.2);
\coordinate (r2) at (-.9,-.2);
\coordinate (r3) at (-1,-.4);
\coordinate (r4) at (-.13,-.5);

\coordinate (mb1) at (.2357,.098-1);
\coordinate (mb2) at (2*.2357,2*.098-1);
\coordinate (mt1) at (.2357,-.098+1);
\coordinate (mt2) at (2*.2357,-2*.098+1);
\coordinate (mt3) at (.855,0.707106781186548/2);
\coordinate (mb3) at (.855,-0.707106781186548/2);

\fill (a) circle (1.5pt);
\fill (b) circle (1.5pt);
\fill (c) circle (1.5pt);
\fill (d) circle (1.5pt);
\fill (e) circle (1.5pt);
\fill (f) circle (1.5pt);
\fill (g) circle (1.5pt);
\fill (h) circle (1.5pt);
\fill (i) circle (2.5pt);
\fill (q5) circle (2.5pt);
\fill (q4) circle (2.5pt);
\fill (q3) circle (2.5pt);
\fill (q2) circle (2.5pt);
\fill (r1) circle (2.5pt);
\fill (r2) circle (2.5pt);
\fill (r3) circle (2.5pt);
\fill (r4) circle (2.5pt);

\draw[dashed] (c) to (d);
\draw[dashed] (d) to (e);
\draw[dashed] (e) to (f);
\draw[dashed] (f) to (g);

\draw[dashed] (a) to (b) to (c);
\draw[dashed] (g) to (h) to (a);

\draw[color=darksagegreen][color=red] (mb1) to node [midway,fill=white]{4} node[near end,sloped, rotate=90]{$\bowtie$} (i) to node [midway,fill=white]{3} node[near start,sloped, rotate=90]{$\bowtie$} (mt1);
\draw[color=darksagegreen][color=red] (mb2) to node [midway,fill=white] {5} node[near end,sloped, rotate=90]{$\bowtie$} (i) to node [midway,fill=white]{5} node[near start,sloped, rotate=90]{$\bowtie$} (mt2);
\draw[color=darksagegreen][color=red] (mb3) to node [midway,fill=white]{3} node[near end,sloped, rotate=90]{$\bowtie$} (i) to node [midway,fill=white]{4} node[near start,sloped, rotate=90]{$\bowtie$} (mt3);

\coordinate (mt4) at (.9,.23) ;
\coordinate (mb4) at (.9,-.23);
\draw[color=darksagegreen][bend right,color=red] (mt4) to node [left]{5} (mb4);

\draw[color=darksagegreen] (i) to[bend right] node [midway,fill=white]{14} node[near start,sloped, rotate=90]{$\bowtie$} (e);
\draw[color=darksagegreen] (g) to node [midway,fill=white]{22} node[near end,sloped, rotate=90]{$\bowtie$} (i);
\draw[color=darksagegreen] (i) to node [midway,fill=white]{9} node[near start,sloped, rotate=90]{$\bowtie$} (c);
\draw[color=darksagegreen] (i) to[out=135, in = 225, min distance=25mm] node [above left,fill=white]{12} node[near start,sloped, rotate=90]{$\bowtie$} (c);
\draw[color=darksagegreen][color=red] (i) to node[near start,sloped, rotate=90]{$\bowtie$} node[near end,sloped, rotate=90]{$\bowtie$} (q5);
\draw[color=darksagegreen] (q5) to node [midway,fill=white]{10} node[near start,sloped, rotate=90]{$\bowtie$} (c);
\draw[color=darksagegreen] (f) to[out=125,in=180] node [midway,fill=white]{28} node[near end,sloped, rotate=90]{$\bowtie$} (q2);
\draw[color=darksagegreen] (f) to[out=45, in = 0] node [midway,fill=white]{30} node[near end,sloped, rotate=90]{$\bowtie$} (q2);
\draw[color=darksagegreen] (i) to node [midway,fill=white]{6} node[near start,sloped, rotate=90]{$\bowtie$} (h);
\draw[color=darksagegreen] (i) to node [midway,fill=white]{7} node[near start,sloped, rotate=90]{$\bowtie$} (b);
\draw[color=darksagegreen] (q2) to node [midway,fill=white]{15} node[near start,sloped, rotate=90]{$\bowtie$} (e);
\draw[color=darksagegreen][color=red] (q2) to node[near start,sloped, rotate=90]{$\bowtie$} node[near end,sloped, rotate=90]{$\bowtie$} (i);
\draw[color=darksagegreen] [color=red](q3) to node[near start,sloped, rotate=90]{$\bowtie$} node[near end,sloped, rotate=90]{$\bowtie$} (i);
\draw[color=darksagegreen][color=red] (q3) to node[near start,sloped, rotate=90]{$\bowtie$} node[near end,sloped, rotate=90]{$\bowtie$} (q4);
\draw[color=darksagegreen][color=red] (q3) to[out=155,in=0] node[pos=.1,sloped, rotate=90]{$\bowtie$} node[near end,sloped, rotate=90]{$\bowtie$} (q2);
\draw[color=darksagegreen][color=red] (q4) to[out=135,in=0] node[pos=.1,sloped, rotate=90]{$\bowtie$}  node[near end,sloped, rotate=90]{$\bowtie$} (q2);
\draw[color=darksagegreen] (q4) to[out=235,in=45] node [midway,fill=white]{19} node[near start,sloped, rotate=90]{$\bowtie$} (f);
\draw[color=darksagegreen][color=red] (q4) to node[near start,sloped, rotate=90]{$\bowtie$} node[near end,sloped, rotate=90]{$\bowtie$} (i);
\draw[color=darksagegreen](i) to[out=210,in=30] node [midway,fill=white]{8} node[near start,sloped, rotate=90]{$\bowtie$} (f);
\draw[color=darksagegreen] (g) to[out=170,in=200] node [midway,fill=white]{23} (i);

\draw[color=darksagegreen][color=red] (q2) to node[near start,sloped, rotate=90]{$\bowtie$} node[near end,sloped, rotate=90]{$\bowtie$} (r1);
\draw[color=red] (q2) to node[near start,sloped, rotate=90]{$\bowtie$} node[near end,sloped, rotate=90]{$\bowtie$} (r2);
\draw[color=red] (q2) to[out=-15,in = 0,min distance=12mm] node[near start,sloped, rotate=90]{$\bowtie$} node[near end,sloped, rotate=90]{$\bowtie$} (r3);
\draw[color=darksagegreen][color=red] (q2) to[out = 195, in = 180,min distance=12mm] node[near start,sloped, rotate=90]{$\bowtie$} node[near end,sloped, rotate=90]{$\bowtie$}(r3);
\draw[color=darksagegreen][color=red] (r2) to node[near start,sloped, rotate=90]{$\bowtie$} node[near end,sloped, rotate=90]{$\bowtie$}(r1);
\draw[color=darksagegreen][color=red] (r1) to node[near start,sloped, rotate=90]{$\bowtie$} node[near end,sloped, rotate=90]{$\bowtie$}(r3);
\draw[color=darksagegreen][color=red] (r2) to node[near start,sloped, rotate=90]{$\bowtie$} node[near end,sloped, rotate=90]{$\bowtie$} (r3);

\draw[color=darksagegreen][color=red] (q2) to[out=0,in = 0,min distance=14mm] node[near start,sloped, rotate=90]{$\bowtie$} (-1,-.5);
\draw[color=darksagegreen][color=red] (q2) to[out = 180, in = 180,min distance=14mm] node[near start,sloped, rotate=90]{$\bowtie$} (-1,-.5);
\draw[color=darksagegreen][color=red] (-1,-.5) node [fill=white]{29};
\draw[color=darksagegreen] (g) to node [midway,fill=white]{25} (r4);
\draw[color=darksagegreen][color=red] (i) to node [near start,sloped,rotate=90]{$\bowtie$} node [midway,fill=white]{24} (r4);

\draw (a) node[right]{{\small $X$}};
\draw (d) node[left]{{\small $X$}};
\draw (b) node[right]{{\small $X$}};
\draw (c) node[right]{{\small $X$}};
\draw (e) node[left]{{\small $X$}};
\draw (f) node[left]{{\small $X$}};
\draw (g) node[right]{{\small $X$}};
\draw (h) node[right]{{\small $X$}};

\draw (.2,0) node {$P_1$};
\draw (q2) node[above] {$P_2$};
\draw (q3) node[above] {$P_3$};
\draw (q4) node[below right] {$P_4$};
\draw (q5) node[above left] {$P_5$};
\draw (r4) node[left]{$S_1$};

\coordinate (t2) at (-.5,1);
\coordinate (b2) at (-1.3,-.5);

\draw[color=darksagegreen] (e) to node[fill=white]{2} (t2);
\draw[color=darksagegreen] (b2) to[out=90,in=180] node[near start, fill=white]{2} node[near end,sloped, rotate=90]{$\bowtie$} (q2);

\draw[color=darksagegreen][color=red] (i) to[out=150,in=-90] node[pos=.85, fill=white]{13} node[near start,sloped, rotate=90]{$\bowtie$} (-.35,1);
\draw[color=darksagegreen][color=red] (-1.2,-.6666667) to[out=95, in =180,min distance = 16 mm] (q2);

\draw[color=darksagegreen][color=red] (i) to[out=210,in=80] node [near end,fill=white]{1} node[near start,sloped, rotate=90]{$\bowtie$}(-0.5,-1);
\draw[color=darksagegreen][color=red] (-1.3,.5) to node [fill=white]{1} (-.75,1);
\draw[color=darksagegreen][color=red] (-1.4,-.333) to[out=90,in=180] node [near start, fill=white]{1} node[near end,sloped, rotate=90]{$\bowtie$} (q2); 

\end{tikzpicture}

\caption{The top triangulation is $\musep^{\mathcal{M}_1,*}\mucycle^{\mathcal{M}_1}e\musep^{\mathcal{M}_1}\musep^{\mathcal{M}_0,*}\mucycle^{\mathcal{M}_0}\musep^{\mathcal{M}_0}(T^*)$. The bottom trianglation is $\musep^X\musep^{\mathcal{M}_1,*}\mucycle^{\mathcal{M}_1}e\musep^{\mathcal{M}_1}\musep^{\mathcal{M}_0,*}\mucycle^{\mathcal{M}_0}\musep^{\mathcal{M}_0}(T^*)$, where $\musep^X$ is defined in Example \ref{ex:main_6}.}\label{fig:main_example_triangulation_5}
\end{figure}

\begin{figure}[t]
\centering
\begin{tikzpicture}[x=6cm,y=4.8cm,font=\tiny]
\coordinate (a) at (1.00000000000000,0.000000000000000);
\coordinate (b) at (0.707106781186548,0.707106781186548);
\coordinate (c) at (0.00000000000000,1.00000000000000);
\coordinate (d) at (-1,1);
\coordinate (e) at (-1.6,0);
\coordinate (f) at (-1,-1);
\coordinate (g) at (0,-1.00000000000000);
\coordinate (h) at (0.707106781186548,-0.707106781186548);
\coordinate (i) at (0.0,0);
\coordinate (q5) at (-.1,0.5);
\coordinate (q2) at (-1,0);
\coordinate (q3) at (-.4,-.15);
\coordinate (q4) at (-.5,-.3);
\coordinate (r1) at (-1.1,-.2);
\coordinate (r2) at (-.9,-.2);
\coordinate (r3) at (-1,-.4);
\coordinate (r4) at (-.13,-.5);

\coordinate (mb1) at (.2357,.098-1);
\coordinate (mb2) at (2*.2357,2*.098-1);
\coordinate (mt1) at (.2357,-.098+1);
\coordinate (mt2) at (2*.2357,-2*.098+1);
\coordinate (mt3) at (.855,0.707106781186548/2);
\coordinate (mb3) at (.855,-0.707106781186548/2);

\fill (a) circle (1.5pt);
\fill (b) circle (1.5pt);
\fill (c) circle (1.5pt);
\fill (d) circle (1.5pt);
\fill (e) circle (1.5pt);
\fill (f) circle (1.5pt);
\fill (g) circle (1.5pt);
\fill (h) circle (1.5pt);
\fill (i) circle (2.5pt);
\fill (q5) circle (2.5pt);
\fill (q4) circle (2.5pt);
\fill (q3) circle (2.5pt);
\fill (q2) circle (2.5pt);
\fill (r1) circle (2.5pt);
\fill (r2) circle (2.5pt);
\fill (r3) circle (2.5pt);
\fill (r4) circle (2.5pt);

\draw[dashed] (c) to (d);
\draw[dashed] (d) to (e);
\draw[dashed] (e) to (f);
\draw[dashed] (f) to (g);

\draw[dashed] (a) to (b) to (c);
\draw[dashed] (g) to (h) to (a);

\draw[color=darksagegreen][color=red] (mb1) to node [midway,fill=white]{4} node[near end,sloped, rotate=90]{$\bowtie$} (i) to node [midway,fill=white]{3} node[near start,sloped, rotate=90]{$\bowtie$} (mt1);
\draw[color=darksagegreen][color=red] (mb2) to node [midway,fill=white] {5} node[near end,sloped, rotate=90]{$\bowtie$} (i) to node [midway,fill=white]{5} node[near start,sloped, rotate=90]{$\bowtie$} (mt2);
\draw[color=darksagegreen][color=red] (mb3) to node [midway,fill=white]{3} node[near end,sloped, rotate=90]{$\bowtie$} (i) to node [midway,fill=white]{4} node[near start,sloped, rotate=90]{$\bowtie$} (mt3);

\coordinate (mt4) at (.9,.23) ;
\coordinate (mb4) at (.9,-.23);
\draw[color=darksagegreen][bend right,color=red] (mt4) to node [left]{5} (mb4);

\draw[color=darksagegreen][color=red] (i) to[bend right] node [midway,fill=white]{14} node[near start,sloped, rotate=90]{$\bowtie$}  node[near end,sloped, rotate=90]{$\bowtie$}(e);
\draw[color=darksagegreen][color=red] (g) to node [midway,fill=white]{22} node[near end,sloped, rotate=90]{$\bowtie$} node[near start,sloped, rotate=90]{$\bowtie$} (i);
\draw[color=darksagegreen][color=red] (i) to node [midway,fill=white]{10} node[near start,sloped, rotate=90]{$\bowtie$} node[near start,sloped, rotate=90]{$\bowtie$} (c);
\draw[color=darksagegreen][color=red] (i) to[out=135, in = 225, min distance=25mm] node [above left,fill=white]{12} node[near start,sloped, rotate=90]{$\bowtie$} (c);
\draw[color=darksagegreen][color=red] (i) to node[near start,sloped, rotate=90]{$\bowtie$} node[near end,sloped, rotate=90]{$\bowtie$} (q5);
\draw[color=darksagegreen][color=red] (q5) to node [midway,fill=white]{9} node[near start,sloped, rotate=90]{$\bowtie$} node[near end,sloped, rotate=90]{$\bowtie$}(c);
\draw[color=darksagegreen][color=red] (f) to[out=125,in=180] node [midway,fill=white]{28} node[near start,sloped, rotate=90]{$\bowtie$} node[near end,sloped, rotate=90]{$\bowtie$} (q2);
\draw[color=darksagegreen] (f) to[out=45, in = 0] node [midway,fill=white]{30} node[near end,sloped, rotate=90]{$\bowtie$} node[near start,sloped, rotate=90]{$\bowtie$} (q2);
\draw[color=darksagegreen][color=red] (i) to node [midway,fill=white]{6} node[near start,sloped, rotate=90]{$\bowtie$} node[near end,sloped, rotate=90]{$\bowtie$}(h);
\draw[color=darksagegreen][color=red] (i) to node [midway,fill=white]{7} node[near start,sloped, rotate=90]{$\bowtie$} node[near end,sloped, rotate=90]{$\bowtie$} (b);
\draw[color=darksagegreen][color=red] (q2) to node [midway,fill=white]{15} node[near start,sloped, rotate=90]{$\bowtie$} node[near end,sloped, rotate=90]{$\bowtie$} (e);
\draw[color=darksagegreen][color=red] (q2) to node[near start,sloped, rotate=90]{$\bowtie$} node[near end,sloped, rotate=90]{$\bowtie$} (i);
\draw[color=darksagegreen][color=red](q3) to node[near start,sloped, rotate=90]{$\bowtie$} node[near end,sloped, rotate=90]{$\bowtie$} (i);
\draw[color=darksagegreen][color=red] (q3) to node[near start,sloped, rotate=90]{$\bowtie$} node[near end,sloped, rotate=90]{$\bowtie$} (q4);
\draw[color=darksagegreen][color=red] (q3) to[out=155,in=0] node[pos=.1,sloped, rotate=90]{$\bowtie$} node[near end,sloped, rotate=90]{$\bowtie$} (q2);
\draw[color=darksagegreen][color=red] (q4) to[out=135,in=0] node[pos=.1,sloped, rotate=90]{$\bowtie$}  node[near end,sloped, rotate=90]{$\bowtie$} (q2);
\draw[color=darksagegreen][color=red] (q4) to[out=235,in=45] node [midway,fill=white]{19}  node[near start,sloped, rotate=90]{$\bowtie$} node[near end,sloped, rotate=90]{$\bowtie$} (f);
\draw[color=darksagegreen][color=red] (q4) to node[near start,sloped, rotate=90]{$\bowtie$} node[near end,sloped, rotate=90]{$\bowtie$} (i);
\draw[color=darksagegreen][color=red] (i) to[out=210,in=30] node [midway,fill=white]{8} node[near start,sloped, rotate=90]{$\bowtie$} node[near end,sloped, rotate=90]{$\bowtie$} (f);
\draw[color=darksagegreen][color=red] (g) to[out=170,in=200] node [midway,fill=white]{23} node[near start,sloped, rotate=90]{$\bowtie$} node[near end,sloped, rotate=90]{$\bowtie$} (i);

\draw[color=darksagegreen][color=red] (q2) to node[near start,sloped, rotate=90]{$\bowtie$} node[near end,sloped, rotate=90]{$\bowtie$} (r1);
\draw[color=red] (q2) to node[near start,sloped, rotate=90]{$\bowtie$} node[near end,sloped, rotate=90]{$\bowtie$} (r2);
\draw[color=red] (q2) to[out=-15,in = 0,min distance=12mm] node[near start,sloped, rotate=90]{$\bowtie$} node[near end,sloped, rotate=90]{$\bowtie$} (r3);
\draw[color=darksagegreen][color=red] (q2) to[out = 195, in = 180,min distance=12mm] node[near start,sloped, rotate=90]{$\bowtie$} node[near end,sloped, rotate=90]{$\bowtie$}(r3);
\draw[color=darksagegreen][color=red] (r2) to node[near start,sloped, rotate=90]{$\bowtie$} node[near end,sloped, rotate=90]{$\bowtie$}(r1);
\draw[color=darksagegreen][color=red] (r1) to node[near start,sloped, rotate=90]{$\bowtie$} node[near end,sloped, rotate=90]{$\bowtie$}(r3);
\draw[color=darksagegreen][color=red] (r2) to node[near start,sloped, rotate=90]{$\bowtie$} node[near end,sloped, rotate=90]{$\bowtie$} (r3);

\draw[color=darksagegreen][color=red] (q2) to[out=0,in = 0,min distance=14mm] node[near start,sloped, rotate=90]{$\bowtie$} (-1,-.5);
\draw[color=darksagegreen][color=red] (q2) to[out = 180, in = 180,min distance=14mm] node[near start,sloped, rotate=90]{$\bowtie$} (-1,-.5);
\draw[color=darksagegreen][color=red] (-1,-.5) node [fill=white]{29};
\draw[color=darksagegreen][color=red] (g) to node [midway,fill=white]{25}  node[near start,sloped, rotate=90]{$\bowtie$} (r4);
\draw[color=darksagegreen] (i) to node [near start,sloped,rotate=90]{$\bowtie$} node [midway,fill=white]{24} (r4);

\draw (a) node[right]{{\small $X$}};
\draw (d) node[left]{{\small $X$}};
\draw (b) node[right]{{\small $X$}};
\draw (c) node[right]{{\small $X$}};
\draw (e) node[left]{{\small $X$}};
\draw (f) node[left]{{\small $X$}};
\draw (g) node[right]{{\small $X$}};
\draw (h) node[right]{{\small $X$}};

\draw (.2,0) node {$P_1$};
\draw (q2) node[above] {$P_2$};
\draw (q3) node[above] {$P_3$};
\draw (q4) node[below right] {$P_4$};
\draw (q5) node[above left] {$P_5$};
\draw (r4) node[left]{$S_1$};

\coordinate (t2) at (-.5,1);
\coordinate (b2) at (-1.3,-.5);

\draw[color=darksagegreen][color=red] (e) to node[fill=white]{2} node[near start,sloped, rotate=90]{$\bowtie$} (t2);
\draw[color=darksagegreen][color=red] (b2) to[out=90,in=180] node[near start, fill=white]{2} node[near end,sloped, rotate=90]{$\bowtie$} (q2);

\draw[color=darksagegreen] (i) to[out=150,in=-90] node[pos=.85, fill=white]{13} node[near start,sloped, rotate=90]{$\bowtie$} (-.35,1);
\draw[color=darksagegreen] (-1.2,-.6666667) to[out=95, in =180,min distance = 16 mm] (q2);

\draw[color=darksagegreen] (i) to[out=210,in=80] node [near end,fill=white]{1} node[near start,sloped, rotate=90]{$\bowtie$}(-0.5,-1);
\draw[color=darksagegreen] (-1.3,.5) to node [fill=white]{1} (-.75,1);
\draw[color=darksagegreen] (-1.4,-.333) to[out=90,in=180] node [near start, fill=white]{1} node[near end,sloped, rotate=90]{$\bowtie$} (q2); 

\end{tikzpicture}

\begin{tikzpicture}[x=6cm,y=4.8cm,font=\tiny]
\coordinate (a) at (1.00000000000000,0.000000000000000);
\coordinate (b) at (0.707106781186548,0.707106781186548);
\coordinate (c) at (0.00000000000000,1.00000000000000);
\coordinate (d) at (-1,1);
\coordinate (e) at (-1.6,0);
\coordinate (f) at (-1,-1);
\coordinate (g) at (0,-1.00000000000000);
\coordinate (h) at (0.707106781186548,-0.707106781186548);
\coordinate (i) at (0.0,0);
\coordinate (q5) at (-.1,0.5);
\coordinate (q2) at (-1,0);
\coordinate (q3) at (-.4,-.15);
\coordinate (q4) at (-.5,-.3);
\coordinate (r1) at (-1.1,-.2);
\coordinate (r2) at (-.9,-.2);
\coordinate (r3) at (-1,-.4);
\coordinate (r4) at (-.5,-.9);

\coordinate (mb1) at (.2357,.098-1);
\coordinate (mb2) at (2*.2357,2*.098-1);
\coordinate (mt1) at (.2357,-.098+1);
\coordinate (mt2) at (2*.2357,-2*.098+1);
\coordinate (mt3) at (.855,0.707106781186548/2);
\coordinate (mb3) at (.855,-0.707106781186548/2);

\fill (a) circle (1.5pt);
\fill (b) circle (1.5pt);
\fill (c) circle (1.5pt);
\fill (d) circle (1.5pt);
\fill (e) circle (1.5pt);
\fill (f) circle (1.5pt);
\fill (g) circle (1.5pt);
\fill (h) circle (1.5pt);
\fill (i) circle (2.5pt);
\fill (q5) circle (2.5pt);
\fill (q4) circle (2.5pt);
\fill (q3) circle (2.5pt);
\fill (q2) circle (2.5pt);
\fill (r1) circle (2.5pt);
\fill (r2) circle (2.5pt);
\fill (r3) circle (2.5pt);
\fill (r4) circle (2.5pt);

\draw[color=darksagegreen][color=red] (c) to node [above] {2} node[near start,sloped, rotate=90]{$\bowtie$} node[near end,sloped, rotate=90]{$\bowtie$} (d) to node [left] {1} node[near start,sloped, rotate=90]{$\bowtie$} node[near end,sloped, rotate=90]{$\bowtie$}(e) to node [left] {2}node[near end,sloped, rotate=90]{$\bowtie$} node[near start,sloped, rotate=90]{$\bowtie$} (f) to node [below] {1} node[near start,sloped, rotate=90]{$\bowtie$} node[near end,sloped, rotate=90]{$\bowtie$}(g);

\draw[dashed] (a) to (b) to (c);
\draw[dashed] (g) to (h) to (a);

\draw[color=darksagegreen][color=red] (mb1) to node [midway,fill=white]{4} node[near end,sloped, rotate=90]{$\bowtie$} (i) to node [midway,fill=white]{3} node[near start,sloped, rotate=90]{$\bowtie$} (mt1);
\draw[color=darksagegreen][color=red] (mb2) to node [midway,fill=white] {5} node[near end,sloped, rotate=90]{$\bowtie$} (i) to node [midway,fill=white]{5} node[near start,sloped, rotate=90]{$\bowtie$} (mt2);
\draw[color=darksagegreen][color=red] (mb3) to node [midway,fill=white]{3} node[near end,sloped, rotate=90]{$\bowtie$} (i) to node [midway,fill=white]{4} node[near start,sloped, rotate=90]{$\bowtie$} (mt3);

\coordinate (mt4) at (.9,.23) ;
\coordinate (mb4) at (.9,-.23);
\draw[color=darksagegreen][bend right,color=red] (mt4) to node [left]{5} (mb4);

\draw[color=darksagegreen][color=red] (i) to[bend right] node [midway,fill=white]{14} node[near start,sloped, rotate=90]{$\bowtie$} node[near end,sloped, rotate=90]{$\bowtie$} (e);
\draw[color=darksagegreen][color=red] (g) to node [midway,fill=white]{22} node[near start,sloped, rotate=90]{$\bowtie$} node[near end,sloped, rotate=90]{$\bowtie$} (i);
\draw[color=darksagegreen][color=red] (i) to node [midway,fill=white]{10} node[near start,sloped, rotate=90]{$\bowtie$} node[near end,sloped, rotate=90]{$\bowtie$}(c);
\draw[color=darksagegreen][color=red] (i) to[out=135, in = 225, min distance=25mm] node [left,fill=white]{12} node[near start,sloped, rotate=90]{$\bowtie$}node[near end,sloped, rotate=90]{$\bowtie$} (c);
\draw[color=darksagegreen][color=red] (i) to node[near start,sloped, rotate=90]{$\bowtie$} node[near end,sloped, rotate=90]{$\bowtie$} (q5);
\draw[color=darksagegreen][color=red] (q5) to node [midway,fill=white]{9} node[near start,sloped, rotate=90]{$\bowtie$} node[near end,sloped, rotate=90]{$\bowtie$} (c);
\draw[color=darksagegreen][color=red] (e) to node [midway,fill=white]{13} node[near start,sloped, rotate=90]{$\bowtie$} node[near end,sloped, rotate=90]{$\bowtie$} (c);
\draw[color=darksagegreen][color=red] (f) to[out=125,in=180] node [midway,fill=white]{28} node[near end,sloped, rotate=90]{$\bowtie$} node[near start,sloped, rotate=90]{$\bowtie$}(q2);
\draw[color=darksagegreen][color=red] (f) to[out=45, in = 0] node [midway,fill=white]{30} node[near end,sloped, rotate=90]{$\bowtie$} node[near start,sloped, rotate=90]{$\bowtie$}(q2);
\draw[color=darksagegreen][color=red] (i) to node [midway,fill=white]{6} node[near start,sloped, rotate=90]{$\bowtie$} node[near end,sloped, rotate=90]{$\bowtie$}(h);
\draw[color=darksagegreen][color=red] (i) to node [midway,fill=white]{7} node[near start,sloped, rotate=90]{$\bowtie$} node[near end,sloped, rotate=90]{$\bowtie$} (b);
\draw[color=darksagegreen][color=red] (q2) to node [midway,fill=white]{15} node[near end,sloped, rotate=90]{$\bowtie$} node[near start,sloped, rotate=90]{$\bowtie$} (e);
\draw[color=darksagegreen][color=red] (q2) to node[near start,sloped, rotate=90]{$\bowtie$} node[near end,sloped, rotate=90]{$\bowtie$} (i);
\draw[color=darksagegreen] [color=red](q3) to node[near start,sloped, rotate=90]{$\bowtie$} node[near end,sloped, rotate=90]{$\bowtie$} (i);
\draw[color=darksagegreen][color=red] (q3) to node[near start,sloped, rotate=90]{$\bowtie$} node[near end,sloped, rotate=90]{$\bowtie$} (q4);
\draw[color=darksagegreen][color=red] (q3) to[out=155,in=0] node[pos=.1,sloped, rotate=90]{$\bowtie$} node[near end,sloped, rotate=90]{$\bowtie$} (q2);
\draw[color=darksagegreen][color=red] (q4) to[out=135,in=0] node[pos=.1,sloped, rotate=90]{$\bowtie$}  node[near end,sloped, rotate=90]{$\bowtie$} (q2);
\draw[color=darksagegreen][color=red] (q4) to[out=235,in=45] node [midway,fill=white]{19} node[near start,sloped, rotate=90]{$\bowtie$} (f);
\draw[color=darksagegreen][color=red] (q4) to node[near start,sloped, rotate=90]{$\bowtie$} node[near end,sloped, rotate=90]{$\bowtie$} (i);
\draw[color=darksagegreen][color=red] (i) to[out=250,in=45] node [midway,fill=white]{8} node[near start,sloped, rotate=90]{$\bowtie$} node[near end,sloped, rotate=90]{$\bowtie$} (f);
\draw[color=darksagegreen][color=red] (g) to[out=90,in=45] node [midway,fill=white]{23} node[near start,sloped, rotate=90]{$\bowtie$} node[near end,sloped, rotate=90]{$\bowtie$} (f);

\draw[color=darksagegreen][color=red] (q2) to node[near start,sloped, rotate=90]{$\bowtie$} node[near end,sloped, rotate=90]{$\bowtie$} (r1);
\draw[color=red] (q2) to node[near start,sloped, rotate=90]{$\bowtie$} node[near end,sloped, rotate=90]{$\bowtie$} (r2);
\draw[color=red] (q2) to[out=-15,in = 0,min distance=12mm] node[near start,sloped, rotate=90]{$\bowtie$} node[near end,sloped, rotate=90]{$\bowtie$} (r3);
\draw[color=darksagegreen][color=red] (q2) to[out = 195, in = 180,min distance=12mm] node[near start,sloped, rotate=90]{$\bowtie$} node[near end,sloped, rotate=90]{$\bowtie$}(r3);
\draw[color=darksagegreen][color=red] (r2) to node[near start,sloped, rotate=90]{$\bowtie$} node[near end,sloped, rotate=90]{$\bowtie$}(r1);
\draw[color=darksagegreen][color=red] (r1) to node[near start,sloped, rotate=90]{$\bowtie$} node[near end,sloped, rotate=90]{$\bowtie$}(r3);
\draw[color=darksagegreen][color=red] (r2) to node[near start,sloped, rotate=90]{$\bowtie$} node[near end,sloped, rotate=90]{$\bowtie$} (r3);

\draw[color=darksagegreen][color=red] (q2) to[out=0,in = 0,min distance=14mm] node[near start,sloped, rotate=90]{$\bowtie$} (-1,-.5);
\draw[color=darksagegreen][color=red] (q2) to[out = 180, in = 180,min distance=14mm] node[near start,sloped, rotate=90]{$\bowtie$} (-1,-.5);
\draw[color=darksagegreen][color=red] (-1,-.5) node [fill=white]{29};
\draw[color=darksagegreen][color=red] (g) to node [midway,fill=white]{25} node[near start,sloped, rotate=90]{$\bowtie$}(r4);
\draw[color=darksagegreen][color=red] (g) to[bend right] node [near end,sloped,rotate=90]{$\bowtie$} node [midway,fill=white]{24} node[near start,sloped, rotate=90]{$\bowtie$}(r4);

\draw (a) node[right]{{\small $X$}};
\draw (d) node[left]{{\small $X$}};
\draw (b) node[right]{{\small $X$}};
\draw (c) node[right]{{\small $X$}};
\draw (e) node[left]{{\small $X$}};
\draw (f) node[left]{{\small $X$}};
\draw (g) node[right]{{\small $X$}};
\draw (h) node[right]{{\small $X$}};

\draw (.2,0) node {$P_1$};
\draw (q2) node[above] {$P_2$};
\draw (q3) node[above] {$P_3$};
\draw (q4) node[below right] {$P_4$};
\draw (q5) node[above left] {$P_5$};
\draw (r4) node[left]{$S_1$};
\end{tikzpicture}
\caption{The top triangulation is $\mucycle^X\musep^X\musep^{\mathcal{M}_1,*}\mucycle^{\mathcal{M}_1}e\musep^{\mathcal{M}_1}\musep^{\mathcal{M}_0,*}\mucycle^{\mathcal{M}_0}\musep^{\mathcal{M}_0}(T^*)$, and the bottom triangulation is $\musep^{X,*}\mucycle^X\musep^X\musep^{\mathcal{M}_1,*}\mucycle^{\mathcal{M}_1}e\musep^{\mathcal{M}_1}\musep^{\mathcal{M}_0,*}\mucycle^{\mathcal{M}_0}\musep^{\mathcal{M}_0}(T^*)$. Both $\musep^X$ and $\mucycle^X$ are defined in Example \ref{ex:main_6}. It is clear from the picture that this is a maximal green sequence for $T^*.$}\label{fig:main_example_triangulation_6}
\end{figure}

\end{document}